\newtheorem{theorem}{Theorem}[section]
\newtheorem{lemma}[theorem]{Lemma}
\newtheorem{corollary}[theorem]{Corollary}
\theoremstyle{definition}
\newtheorem{definition}[theorem]{Definition}
\newtheorem{notation}[theorem]{Notation}
\newtheorem{example}[theorem]{Example}
\newtheorem{proposition}[theorem]{Proposition}
\newtheorem{fact}[theorem]{Fact}
\newtheorem{observation}[theorem]{Observation}
\theoremstyle{remark}
\newtheorem{remark}[theorem]{Remark}
\theoremstyle{remark}
\numberwithin{equation}{section}
\newcommand{\R}{\mathbb{R}}
\newcommand{\eg}{\operatorname{EG}}
\newcommand{\oo}{s}
\newcommand{\oi}{\ell}
\newcommand{\sym}{\operatorname{Sym}}
\newcommand{\fa}{\mathfrak{A}}
\newcommand{\aut}{\operatorname{Aut}}
\newcommand{\thy}{\operatorname{Th}}
\newcommand{\CC}{\mathcal{C}}
\newcommand{\PP}{\mathcal{P}}
\newcommand{\Pc}{\mathcal{P}}
\newcommand{\II}{\mathcal{I}}
\newcommand{\Lc}{\mathcal{L}}
\newcommand{\Oc}{\mathcal{O}}
\newcommand{\Afr}{\mathfrak{A}}
\newcommand{\Bfr}{\mathfrak{B}}
\newcommand{\Cfr}{\mathfrak{C}}
\newcommand{\abar}{\bar{a}}
\newcommand{\bbar}{\bar{b}}
\newcommand{\cbar}{\bar{c}}
\newcommand{\dbar}{\bar{d}}
\newcommand{\xbar}{\bar{x}}
\newcommand{\ybar}{\bar{y}}
\newcommand{\ZF}{\ifmmode\mathsf{ZF}\else$\mathsf{ZF}$\fi}
\newcommand{\ZFC}{\ifmmode\mathsf{ZFC}\else$\mathsf{ZFC}$\fi}
\newcommand{\DLO}{\ifmmode\mathsf{DLO}\else$\mathsf{DLO}$\fi}
\newcommand{\bracenom}{\genfrac{\lbrace}{\rbrace}{0pt}{}}
\title[Labelled growth rates and choiceless set theory]{Labelled growth rates of $\omega$-categorical structures and applications in choiceless set theory}
\author{Bertalan Bodor}
\address{Department for Discrete Mathematics and Geometry, FB1 Algebra Group, Technische Universit\"{a}t Wien, Wiedner Hauptstraße 8-10, 1040 Wien, Austria}
\email{bertalan.bodor@tuwien.ac.at}
\author{Samuel Braunfeld}
\address{Computer Science Institute, Charles University. Prague, Czech Republic; and The Czech Academy of Sciences, Institute of Computer Science, Pod Vod\'{a}renskou v\v{e}\v{z}\'{\i} 2, 182 00 Prague, Czech Republic.}
\email{sambraunfeld@gmail.com}
\author{James E. Hanson}
\address{Department of Mathematics, Iowa State University,  396 Carver Hall,  411 Morrill Road, Ames, IA 50011, USA}
\email{jameseh@iastate.edu}
\thanks{This paper is part of a project that has received funding from the European Research Council (ERC) under the European Union's Horizon 2020 research and innovation programme (grant agreement No 810115 - Dynasnet). 
The second author is further supported by Project 24-12591M of the Czech Science Foundation (GA\v{C}R), and supported partly by the long-term strategic development financing of the Institute of Computer Science (RVO: 67985807).
The first authour has been funded by the European Research Council (Project POCOCOP, ERC Synergy Grant 101071674). Views and opinions expressed are however those of the authors only and do not necessarily reflect those of the European Union or the European Research Council Executive Agency. Neither the European Union nor the granting authority can be held responsible for them.}
\begin{document}

\maketitle

\begin{abstract}
We study the labelled growth rate of an $\omega$-categorical structure $\fa$, i.e., the number of orbits of $Aut(\fa)$ on $n$-tuples of distinct elements, and show that the model-theoretic property of monadic stability yields a gap in the spectrum of allowable labelled growth rates. As a further application, we obtain gap in the spectrum of allowable labelled growth rates in hereditary graph classes, with no a priori assumption of $\omega$-categoricity. We also establish a way to translate results about labelled growth rates of $\omega$-categorical structures into combinatorial statements about sets with weak finiteness properties in the absence of the axiom of choice, and derive several results from this translation.
\end{abstract}

\section{Introduction}

\subsection{Counting in $\omega$-categorical structures}

In the 1970s, permutation group theorists, particularly Peter Cameron, began considering orbit-counting problems for groups acting on a countable set $X$, under the additional assumption that the number of orbits on $X^n$ is finite for every $n$. Under this assumption, we obtain sequences of natural numbers counting the number of orbits on unordered $n$-sets for every $n$, as well as on ordered $n$-tuples of distinct elements. Many of the problems focused on obtaining a detailed understanding of the situations with the most symmetry, i.e., where the sequences grow slowly, and identifying jumps in the realizable asymptotics of these sequences.

The finiteness condition on these problems means that, without loss of generality, we can restrict our attention to the natural action of the automorphism group of some countable $\omega$-categorical relational structure $M$. We may further assume $M$ is homogeneous, and so the number of orbits on unordered $n$-sets is equal to the number of $n$-substructures of $M$ up to isomorphism (unlabelled enumeration), while the number of orbits on $n$-tuples of distinct elements is equal to the number of $n$-substructures of $M$ with domain $[n]$ up to equality (labelled enumeration). The labelled problem can also be seen as counting the number of $n$-types of distinct elements of $M$. 

In the labelled setting, we prove the following theorem, using the model-theoretic properties of cellularity and monadic stability to identify jumps in the realizable asymptotics. Furthermore, Lachlan \cite{lachlan1992} and Bodor \cite{bodor2024classification} have given a structural classification of the $\omega$-categorical monadically stable structures, which the following theorem identifies as structures with sufficiently slow labelled growth.

\begin{theorem}[Theorem \ref{thm:main}]
    Let $\fa$ be an $\omega$-categorical structure. Then one of the following holds.
    \begin{enumerate}
        \item $\fa$ is cellular and there exist $c, d \in \R, d<1$ such that $\oi_n(\fa) \leq cn^{dn}$.
        \item $\fa$ is monadically stable but not cellular. In this case $\oi_n(\fa) \geq B_n$ where $B_n$ is the $n^{th}$ Bell number, and for sufficiently large $n$, $\oi_n(\fa) \leq n!/c^n$ for all $c > 0$.
        \item $\fa$ is not monadically stable and there is some polynomial $p(n)$ such that $\oi_n(\fa) \geq n!/p(n)$.
    \end{enumerate}
\end{theorem}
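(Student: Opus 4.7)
The plan is to treat the three cases separately, leveraging the structural classifications of $\omega$-categorical cellular structures (Lachlan) and $\omega$-categorical monadically stable structures (Lachlan--Bodor) cited above, together with a direct combinatorial unpacking of monadic instability. The three ranges of $\oi_n(\fa)$ in the statement are strictly separated asymptotically, so once one establishes the matching bound in each case, the trichotomy follows from the classifications themselves.

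For case (1), I would invoke the classification of cellular $\omega$-categorical structures as (up to bi-interpretation) finite expansions by constants of disjoint unions of finitely many ``trivial'' sorts. The enumeration $\oi_n(\fa)$ then reduces to counting distributions of $n$ distinct elements among these sorts together with an orbit label within each sort, producing a bound $\oi_n(\fa) \leq c n^{dn}$ with $d < 1$ forced by the bounded arity of the interpretation.

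For case (2), the failure of cellularity inside the monadically stable classification should produce a nontrivial definable (possibly imaginary) equivalence relation with infinitely many infinite classes, whence every set partition of $[n]$ is realised by some labelled $n$-tuple and $\oi_n(\fa) \geq B_n$. For the upper bound I would use the Lachlan--Bodor tree decomposition to encode every labelled type on $[n]$ as a bounded-depth ``type tree'' with controlled leaf data; stability forbids a definable linear order, so no factorial-scale ordering contribution appears, yielding $\oi_n(\fa) \leq n!/c^n$ for every fixed $c>0$ and sufficiently large $n$. For case (3), monadic instability provides, in some monadic expansion $\fa^*$, a formula with the order property; the witnessing configuration, encoded via finitely many imaginary sorts of $\fa$, already yields $n!$ distinct imaginary types of ordered $n$-tuples, and pulling back to orbits on distinct $n$-tuples of real elements of $\fa$ costs at most a polynomial factor, giving $\oi_n(\fa) \geq n!/p(n)$.

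The main obstacle should be the upper bound in case (2): the Lachlan--Bodor decomposition can in principle nest deeply, and one must show that none of the recursive layers smuggles in enough ``ordered'' freedom to push $\oi_n(\fa)$ above $n!/c^n$ for some fixed $c > 0$. A secondary difficulty is the case (3) lower bound, where extracting genuine orbits on distinct $n$-tuples of real elements (rather than merely imaginary or monadically definable ones) from the order-property witness is what forces the polynomial factor $p(n)$.
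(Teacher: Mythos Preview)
Your handling of case~(3) has a genuine gap. You propose to take the order property in some monadic expansion $\fa^*$ and ``encode the witnessing configuration via finitely many imaginary sorts of $\fa$''. But the added unary predicates are \emph{arbitrary} subsets of $A$, not definable or even interpretable in $\fa$; there is no general mechanism for pulling an $\fa^*$-orbit distinction back to an $\fa$-orbit distinction. Concretely, when $\fa$ is stable but not monadically stable, $\fa$ itself has no order-property formula at all, so there is nothing to pull back in your sense. The paper instead splits case~(3) into three subcases and uses a different tool in each: if $\fa$ has IP, Macpherson's theorem already gives $\oi_n(\fa)>2^{p(n)}$ for a degree-$\geq 2$ polynomial; if $\fa$ is NIP and unstable, Simon's structure theory for $\omega$-categorical NIP theories yields an order-property formula \emph{on singletons} (with parameters), from which $n!/p(n)$ follows after absorbing the parameters; if $\fa$ is stable but not monadically stable, the Baldwin--Shelah characterisation gives that $\fa$ admits \emph{coding} on singletons, and a direct count from a coding configuration yields $\oi_n(\fa)>n^{(2-\varepsilon)n}\geq n!$. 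Note in particular that even in the unstable subcase, an order property on $k$-tuples only gives roughly $(n/k)! \sim n!/c^n$; reaching $n!/p(n)$ genuinely requires the reduction to singletons.

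Two smaller points. For the case~(2) lower bound, you say the equivalence relation may be ``possibly imaginary'', but to conclude $\oi_n(\fa)\geq B_n$ on \emph{real} tuples you need it on singletons; the paper extracts exactly this from the classification. For the case~(2) upper bound, your sketch (``stability forbids a definable linear order, so no factorial contribution'') is in the right spirit but vague; the paper makes this precise by showing the exponential generating function $\sum \oi_n(\fa)x^n/n!$ converges everywhere, proved by induction on Bodor's building operations (finite direct product multiplies EGFs, wreath with $S_\omega$ composes with $\exp$, passing to a closed supergroup only decreases orbits).
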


This is the labelled analogue of the main result of \cite{braunfeld2022monadic} in the unlabelled case, which identified monadic stability as explaining the gap between sub-exponential and exponential growth. In the labelled setting, linear orders now have somewhat fast growth and so the statement is cleaner: all structures with sufficiently slow growth must be monadically stable. This also sharpens the main result of \cite{bodirsky2021permutation}, which identified cellularity as explaining a jump, which we show truly reaches the Bell numbers. 

\subsection{Counting in hereditary graph classes}
There is another line of work, concerned with asymptotic labelled enumeration in \emph{hereditary} classes of graphs, i.e., classes closed under induced subgraph. Thus we have dropped the previous assumption that our hereditary class arises as the set of substructures of an $\omega$-categorical homogeneous structure. The concern with jumps and the structure of slow classes is similar, and previous work is surveyed in \cite{bollobas1997hereditary, bollobas12007hereditary}. In this setting, we identify a new gap by showing it is still true that any graph class with sufficiently slow growth must come from $\omega$-categorical monadically stable structures.

\begin{theorem}[Proposition \ref{prop:sd catms}, Theorem \ref{thm:graph gap}] \label{thm:intrographgap}
    Let $\CC$ be a hereditary graph class. Then one of the following holds.
    \begin{enumerate}
        \item $\CC$ has bounded shrub-depth and $\oi_n(\CC) < \frac{n!}{c^n}$ for every $c>0$, for sufficiently large $n$.
        \item $\CC$ has unbounded shrub-depth, and $\oi_n(\CC) > \frac{n!}{10^n}$, for sufficiently large $n$.
    \end{enumerate}
    Furthermore, $\CC$ has bounded shrub-depth if and only if it is the union of the ages of finitely many $\omega$-categorical monadically stable graphs.
\end{theorem}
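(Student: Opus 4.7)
My strategy is to first prove the structural \emph{furthermore} claim (playing the role of Proposition~\ref{prop:sd catms}) and then derive the growth dichotomy (Theorem~\ref{thm:graph gap}) by combining this equivalence with the earlier Theorem~\ref{thm:main}. The plan in outline: (i) establish the equivalence between bounded shrub-depth and being the union of ages of finitely many $\omega$-categorical monadically stable graphs, (ii) apply Theorem~\ref{thm:main} to each factor to obtain the upper bound of item~(1), and (iii) argue by contrapositive for item~(2), forcing slowly-growing classes to be captured by the $\omega$-categorical monadically stable framework.

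For the equivalence, the backward direction follows from the Lachlan--Bodor classification of $\omega$-categorical monadically stable structures: the description as iterated (complements of) disjoint unions of bounded depth gives a tree-model certifying bounded shrub-depth, and a finite union preserves this bound. For the forward direction, I would use the standard characterization of bounded shrub-depth classes as FO-transductions of labelled trees of bounded depth; since countable bounded-depth trees are $\omega$-categorical and monadically stable, and both properties are preserved under transduction (the monadic parameters absorbed by passing to a finite cover), this produces a cover of $\CC$ by ages of finitely many $\omega$-categorical monadically stable graphs. Item~(1) then follows routinely: writing $\CC \subseteq \bigcup_{i \le k} \mathrm{age}(\Afr_i)$ with each $\Afr_i$ either cellular or non-cellular monadically stable, Theorem~\ref{thm:main} gives $\oi_n(\Afr_i) \le n!/c^n$ for any fixed $c > 0$ and all large $n$ (the cellular bound $c n^{dn}$ with $d < 1$ being asymptotically smaller), so $\oi_n(\CC) \le k \cdot \max_i \oi_n(\Afr_i)$ satisfies the same bound.

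Item~(2) is the main obstacle. I would prove it by contrapositive: if $\oi_n(\CC) \le n!/10^n$ for all sufficiently large $n$, then $\CC$ has bounded shrub-depth. The plan is to extract, via a compactness or ultraproduct construction applied to a sequence of graphs in $\CC$, an infinite structure $\Bfr$ with $\mathrm{age}(\Bfr) \subseteq \CC$, and to use the slow-growth hypothesis to pin down the type spaces of $\Bfr$ sharply enough to produce an $\omega$-categorical reduct whose age still lies in $\CC$. Theorem~\ref{thm:main} then excludes both the non-monadically-stable case ($\oi_n \ge n!/p(n)$) and the non-cellular monadically stable case ($\oi_n \ge B_n$, and $B_n \sim (n/\ln n)^n$ dominates $n!/10^n$ for large $n$), so the reduct must be cellular; the forward direction of the equivalence then bounds the shrub-depth of $\CC$. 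The delicate step---and the principal technical difficulty---is producing the $\omega$-categorical reduct from slow growth in a hereditary class, which likely requires a Ramsey-flavoured indiscernibility extraction calibrated both to the specific $n!/10^n$ threshold and to the combinatorial obstructions characterising unbounded shrub-depth.
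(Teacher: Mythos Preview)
Your backward direction of the equivalence and the derivation of item~(1) are essentially the same as the paper's. The differences and problems lie in the forward direction of the equivalence and, more seriously, in item~(2).

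For the forward direction of the equivalence, the paper does not go through preservation of $\omega$-categoricity and monadic stability under transduction. Instead it uses that bounded shrub-depth implies $\CC_{[n^-]}$ is well-quasi-ordered under embedding for every $n$ (Ganian et al.), and then invokes a result of Burris that this WQO condition forces $\CC$ to be a finite union of ages of $\omega$-categorical structures; monadic stability of the whole class then transfers to each factor because it depends only on the universal theory. Your transduction sketch is plausible in spirit but does not explain how one extracts \emph{finitely many} $\omega$-categorical structures covering $\CC$: a transduction from bounded-depth trees involves infinitely many colourings, and it is not clear without further argument that the resulting ages fall into finitely many groups.

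For item~(2), your contrapositive strategy has a concrete error and a structural gap. The error: you claim $B_n$ dominates $n!/10^n$, but in fact $B_n < n!/c^n$ for every $c>0$ and large $n$ (this is exactly the content of Theorem~\ref{ms_slow} applied to the infinite equivalence relation). So the hypothesis $\oi_n(\CC)\le n!/10^n$ cannot rule out the non-cellular monadically stable case. This particular slip is not fatal, since for your purposes it would suffice to land in the monadically stable case rather than the cellular one; but the structural gap remains. Even granting an $\omega$-categorical structure $\Bfr$ with $\mathrm{age}(\Bfr)\subseteq\CC$ and $\oi_n(\Bfr)$ bounded by the growth of $\CC$, you obtain only that \emph{this one} age has bounded shrub-depth, not that all of $\CC$ does. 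There is no mechanism in your argument for covering $\CC$ by finitely many such ages, and producing $\omega$-categoricity from a growth bound alone is not something Theorem~\ref{thm:main} provides.

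The paper avoids all of this by proving item~(2) \emph{directly}, not by contrapositive. It invokes the obstruction theorem of Mahlmann: a hereditary graph class of unbounded shrub-depth contains either arbitrarily large semi-induced half-graphs or arbitrarily large flipped $3P_t$'s. In the half-graph case one counts labellings directly to get $\oi_n(\CC)\ge n!/(2+\varepsilon)^n$. In the flipped-$3P_t$ case one shows (via an explicit formula using three unary predicates for the three paths) that the underlying $3P_t$ is definably recoverable, and then counts labellings of $3P_t$ with a $27^k$ penalty for the colour choices, yielding roughly $n!/9^n$. This is where the constant $10$ comes from, and it is the ``Ramsey-flavoured extraction'' you allude to, but supplied externally by the cited obstruction theorem rather than built from the growth hypothesis.
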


Classes with labelled growth rate slower than $n^{(1+o(1))n}$ were previously well-understood \cite{balogh2000speed}. Furthermore, for every $c > 1$, there are classes whose labelled growth rate oscillates between $n^{(c+o(1))n}$ and $2^{n^{2-1/c}}$ infinitely often \cite[Theorem 10]{balogh2001penultimate}, so there can be no gaps in the possible growth rates in this range. But there may be significant structure within the $n^{(1+o(1))n}$ range, such as the gap above.

\subsection{Choiceless set theory and $\omega$-categoricity}

There is a strong connection between $\omega$-categorical first-order theories and certain kinds of `weakly finite' sets that can occur in set theory without the axiom of choice (i.e., sets satisfying conditions that are ordinarily equivalent to finiteness). In \cite{plotkin1969}, Plotkin showed how to generically embed countable models of $\omega$-categorical theories into models of \ZF.

\begin{fact}[Plotkin {\cite{plotkin1969}}]\label{fact:PlotkinEmbed}
  For any $\omega$-categorical theory $T$ in a countable language $\Lc$, there is a symmetric submodel $M$ of a forcing extension such that in $M$, there is an $\Lc$-structure $A$ whose underlying set is a subset of $\Pc(\Pc(\omega))$ such that $A \models T$ and for every $n$, every $X \subseteq A^n$ is first-order definable with parameters.
\end{fact}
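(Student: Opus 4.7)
I would use a symmetric submodel construction with group $G = \aut(\Afr)$, where $\Afr$ is a fixed countable model of $T$ with domain $\omega$. Take $\mathbb{P}$ to be Cohen forcing adding a sequence $(r_n)_{n < \omega}$ of mutually generic subsets of $\omega$, let $G$ act on $\mathbb{P}$ by permuting the index set $\omega$ according to its natural action on $\Afr$, and let $\Fc$ be the normal filter of subgroups of $G$ generated by the pointwise stabilizers $G_F$ of finite $F \subseteq \omega$. Let $M$ be the resulting symmetric submodel of the generic extension. Inside $M$, set $a_n = \{r_n\}$ and $A = \{a_n : n < \omega\}$; both lie in $M$ with empty support and satisfy $A \subseteq \Pc(\Pc(\omega))$. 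For each relation symbol $R \in \Lc$ of arity $k$, define $R^A(a_{n_1},\ldots,a_{n_k}) \iff \Afr \models R(n_1,\ldots,n_k)$; this set lies in $M$ (again with empty support) precisely because $G$ preserves $R^\Afr$. The map $n \mapsto a_n$ is an $\Lc$-isomorphism $\Afr \to A$ in the full extension (though not in $M$), so $A \models T$.

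For the key definability statement, I would fix $X \subseteq A^n$ in $M$ with a hereditarily symmetric name $\dot{X}$ supported by $G_F$ for some finite $F \subseteq \omega$. Suppose $(a_{k_1},\ldots,a_{k_n})$ and $(a_{m_1},\ldots,a_{m_n})$ realize the same complete $n$-type over $F$ in $\Afr$. By $\omega$-categoricity there is $\pi \in G$ fixing $F$ pointwise with $\pi(k_i)=m_i$ for each $i$; then $\pi \in G_F$ fixes $\dot{X}$, and the standard symmetry lemma for symmetric submodels yields that membership in $X$ is preserved. Hence $X$ is a union of $G_F$-orbits on $A^n$, which correspond bijectively to the complete $n$-types over $F$ realized in $\Afr$. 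By $\omega$-categoricity each such type is isolated by a single formula with parameters from $F$, and there are only finitely many, so $X$ is first-order definable with parameters, as required.

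The main obstacle is not a single deep step but rather orchestrating the setup carefully: verifying that the natural action of $G = \aut(\Afr)$ lifts to a well-defined action on $\mathbb{P}$-names compatible with the forcing order, that each object (especially the interpretations $R^A$) truly lands in $M$ rather than merely in the full extension, and that the explicit choice $a_n = \{r_n\}$ places $A$ inside $\Pc(\Pc(\omega))$. Once these technicalities are in place, $\omega$-categoricity of $T$ translates the finite-support hypothesis on $\dot{X}$ directly into first-order definability of $X$ over a finite parameter set, and that translation is the conceptual heart of the argument.
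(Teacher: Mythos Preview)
The paper does not prove this statement; it records it as a Fact attributed to Plotkin \cite{plotkin1969} and gives no argument of its own. So there is no ``paper's proof'' to compare against.

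That said, your outline is the standard construction and is correct. You have identified the right symmetric-extension setup (Cohen reals indexed by the domain of a fixed countable model $\Afr \models T$, group $G=\aut(\Afr)$ acting by permuting indices, filter generated by finite-support stabilizers), the right coding of $A$ inside $\Pc(\Pc(\omega))$, and the right reason each $R^A$ and $A$ itself land in $M$ (namely $G$-invariance of their canonical names). The definability argument is also the right one: a hereditarily symmetric name for $X\subseteq A^n$ has some finite support $F$, and $\omega$-categoricity of $\thy(\Afr,a)_{a\in F}$ turns $G_F$-invariance of $X$ into definability over the finite parameter set $\{a_i:i\in F\}$.

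The only place I would ask you to be more explicit is the step you label ``the standard symmetry lemma.'' In the forcing formulation, knowing $\pi\in G_F$ and $\pi(\dot X)=\dot X$ gives $\pi(p)\Vdash \dot a_{\bar m}\in\dot X$ from $p\Vdash \dot a_{\bar k}\in\dot X$, but $\pi(p)$ need not lie in the generic filter; one must produce a $\pi'\in G_F$ still sending $\bar k$ to $\bar m$ with $\pi'(p)$ compatible with $p$ (using that Cohen conditions have finite support and that, by $\omega$-categoricity, one can realize the required type over $F$ while moving the remaining support coordinates to fresh indices). This is routine but is exactly the ``orchestration'' you flag, so spelling it out once would make the write-up self-contained. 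Alternatively, carrying out the argument first in the Fraenkel--Mostowski permutation model (where the orbit statement is immediate) and then invoking Jech--Sochor to transfer it to a symmetric submodel avoids this compatibility bookkeeping entirely.
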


This can be used to build many different kinds of failures of choice in \ZF. For instance, if $T$ is the theory of an infinite set, then every subset of $A$ will be finite or co-finite. If $T$ is \DLO{}, then $A$ will be a dense linear order with the property that every subset of $A$ is a finite union of intervals.

In \cite{walczaktypke2005}, Walczak-Typke showed (building on earlier work of Truss in \cite{Truss_1995}) a strong converse to Fact~\ref{fact:PlotkinEmbed} by mapping out a direct correspondence between various weak finiteness notions in \ZF{} (which we review in Section~\ref{sec:set-theory-preliminaries}) and model-theoretic properties. In Section~\ref{sec:growth-rates-in-weakly-finite-sets}, we will use this correspondence to prove some surprising combinatorial theorems about various kinds of weakly finite sets. Crucially though, we are able to avoid working in \ZF{} explicitly by using set-theoretic absoluteness. The general principle of this kind of argument is that in \ZF{}, given a well-ordered bundle of data $T$ (which in our case will always be a first-order theory in a well-orderable language), we can build a (proper class sized) inner model\footnote{$L[T]$ is G\"odel's universe of constructible sets relativized to $T$.} $L[T]$ of \ZFC, apply some standard argument there to prove something about $T$, and then note that the property in question is \emph{absolute} between $L[T]$ and $V$ (our original class of all sets), meaning that it is true when interpreted as a statement in $L[T]$ if and only if it is true when interpreted as a statement in $V$. For example, `$T$ is a consistent first-order theory' is an absolute statement, because inconsistency is witnessed by a finite object (some finite subset of $T$ together with a finite proof), but `$T$ is a countable set' is not absolute, since a bijection between $T$ and $\omega$ can exist in $V$ but fail to be an element of $L[T]$.

\section{Preliminaries} \label{sec:prelim}

\subsection{Growth rates and $\omega$-categoricity}

\begin{definition}
    A permutation group $G \leq Sym(X)$ is \emph{oligomorphic} if $G$ acting on $X^n$ has only finitely many orbits for every $n \in \omega$.\\

    A countable structure in a countable language $\fa$ is \emph{$\omega$-categorical} if $\aut(\fa)$ acting on $\fa$ is oligomorphic.
\end{definition}

For a set $A$ we write $A^{(n)}$ for the set of injective $n$-tuples from $A$, i.e., $$A^{(n)}=\{(a_1,\dots,a_n)\in A^n: | a_1, \dots\, a_n \text{ are distinct}\}.$$

\begin{definition}
	Let $\fa$ be an $\omega$-categorical structure. Then we denote by $\oi_n(\fa)$ (resp. $\oo_n(\fa)$) the number of orbits of $\aut(\fa)$ acting on $A^{(n)}$ (resp. $A^n$). We call the sequence $\oi_n(\fa)$ the \emph{labelled growth rate of $\fa$}.
\end{definition}

	Note that if $\fa$ is infinite then $\oi_n(\fa)\leq \oi_{n+1}(\fa)$. Indeed, if $\bar{a},\bar{b}\in A^{(n)}$ are in different $n$-orbits then we can pick some $c\in A$ not contained in either $\bar{a}$ or $\bar{b}$. Then $\bar{a}c,\bar{b}c\in A^{(n+1)}$ are also in different orbits.
	
\begin{observation}\label{stirling}
	The values of $\oo_n$ can be calculated by the formula $\oo_n(\fa)=\sum_{k=1}^n\bracenom{n}{k}\oi_k(\fa)$ where $\bracenom{n}{k}$ denotes the Stirling numbers  of the second kind.
\end{observation}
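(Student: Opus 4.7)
The plan is to stratify $A^n$ according to the \emph{equality pattern} of a tuple. Given $\bar{a} = (a_1,\dots,a_n) \in A^n$, define an equivalence relation $\sim_{\bar{a}}$ on $[n]$ by $i \sim_{\bar{a}} j$ iff $a_i = a_j$. This yields a partition $P(\bar{a})$ of $[n]$, and clearly any automorphism of $\fa$ preserves equality patterns, so $P$ is constant on $\aut(\fa)$-orbits. Hence the orbits of $\aut(\fa)$ on $A^n$ decompose according to which partition $P$ of $[n]$ arises, and it suffices to count, for each partition $P$ with $k$ blocks, the orbits on tuples whose equality pattern is exactly $P$.

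Next I would set up a bijection between $\aut(\fa)$-orbits on the set $\{\bar{a} \in A^n : P(\bar{a}) = P\}$ and $\aut(\fa)$-orbits on $A^{(k)}$. Fix an enumeration $B_1, \dots, B_k$ of the blocks of $P$ (say, ordered by least element). The map sending $\bar{a}$ with $P(\bar{a}) = P$ to the injective $k$-tuple $(a_{\min B_1}, \dots, a_{\min B_k}) \in A^{(k)}$ is surjective onto $A^{(k)}$, $\aut(\fa)$-equivariant, and has the property that two tuples with equality pattern $P$ lie in the same $\aut(\fa)$-orbit iff their images under this map do. This gives a bijection between orbits on the $P$-stratum and orbits on $A^{(k)}$, so the $P$-stratum contributes exactly $\oi_k(\fa)$ orbits.

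Finally, summing over all partitions of $[n]$ and grouping partitions by their number of blocks $k$, the number of partitions of $[n]$ into exactly $k$ nonempty blocks is by definition $\bracenom{n}{k}$, yielding
\[
\oo_n(\fa) = \sum_{k=1}^n \bracenom{n}{k} \oi_k(\fa).
\]
There is no real obstacle here; the only thing to be careful about is checking that the chosen section $\bar{a} \mapsto (a_{\min B_1}, \dots, a_{\min B_k})$ really does detect equality of orbits, which is immediate since applying an automorphism commutes with extracting coordinates.
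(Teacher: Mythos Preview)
Your argument is correct and is the standard proof of this identity; the paper itself states the observation without proof, so there is nothing further to compare.
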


\begin{definition}
    Let $\CC$ be a hereditary class of relational structures. We let $\oi_n(\CC)$ denote the number of labelled structures in $\CC$ of size $n$, i.e., the number of structures in $\CC$ with vertex set $[n]$ counted up to equality rather than isomorphism. The \emph{labelled growth rate} of $\CC$ (also called the $\emph{speed}$) is the function $n \mapsto \oi_n(\CC)$.
\end{definition}

If $\fa$ is an $\omega$-categorical relational structure with $Age(\fa) = \CC$, then $\oi_n(\fa) \geq \oi_n(\CC)$, and equality is achieved if $\fa$ has quantifier elimination (which can be assumed without loss of generality when computing $\oi_n(\fa)$). This explains why we also refer to $\oi_n(\fa)$ as the labelled growth rate of $\fa$.

\subsection{Stability, monadic stability, NIP, and monadic NIP}

\begin{definition}
    A (possibly incomplete) theory $T$ is \emph{unstable} if there is some $M \models T$, a formula $\phi(\xbar; \ybar)$, and $(\abar_i \in M^{|\xbar|} : i \in \omega)$, $(\bbar_j \in M^{|\ybar|} : j \in \omega)$ such that $M \models \phi(\abar_i; \bbar_j) \iff i \leq j$. A theory $T$ is \emph{stable} if it is not unstable.

    A theory $T$ \emph{has the independence property} if there is some  $M \models T$, a formula $\phi(\xbar; \ybar)$, and $(\abar_i \in M^{|\xbar|} : i \in \omega)$, $(\bbar_j \in M^{|\ybar|} : j \in 2^{\omega})$ such that $M \models \phi(\abar_i; \bbar_j) \iff i \in j$. A theory $T$ is \emph{NIP} if it does not have the independence property.

        A theory $T$ is \emph{monadically stable} (resp., \emph{monadically NIP}) if every expansion of $T$ by any number of unary predicates is stable (resp., NIP). 
\end{definition}

We will say a structure $\fa$ has one of the properties above if $Th(\fa)$ does. And we will say a class of structures $\CC$ has one of the properties above if $\bigcap_{M \in \CC} Th(M)$ does.

It is useful to name the graph appearing in the definition of unstable theories.

\begin{definition}
    The \emph{half-graph} of order $n$ is the bipartite graph with parts $\set{a_1, \dots, a_n}$ and $\set{b_1, \dots, b_n}$ and an edge between $a_i$ and $b_j$ if and only if $i \leq j$.
\end{definition}

Canonical obstructions to monadic stability/NIP appear already in the theory $T$ itself, rather than needing to pass to a unary expansion. The following definition is essentially taken from~\cite{baldwin1985second}.
	
\begin{definition} \label{def:coding}
	A theory $T$ \emph{admits coding} if there exists a formula $\phi(x,y,z)$ with parameters, and infinite sets $A,B,C\subset M$ in some model $M$ of $T$ such that for all $(a,b)\in A\times B$, there exists $c\in C$ such that $\forall c'\in C(\phi(a,b,c')\Leftrightarrow c=c')$.

    A theory $T$ \emph{admits tuple-coding} if the we instead allow a formula $\phi(\xbar, \ybar, z)$, and similarly allow $A$ and $B$ to be sets of tuples of corresponding length.
\end{definition}

\begin{remark}
	Note that by compactness it is enough to require the above condition in the case when $A,B,C$ are arbitrarily large but finite. In this case these sets with the required property can be found in any model of $T$.
\end{remark}

\begin{theorem}[{\cite[Theorem 1.1]{braunfeld2021characterizations}}]
    For any theory $T$, the following are equivalent.
    \begin{itemize}
        \item $T$ is monadically NIP.
        \item $T$ does not admit tuple-coding.
    \end{itemize}
\end{theorem}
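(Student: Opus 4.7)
The plan is to prove the two implications separately, with the direction ``tuple-coding $\Rightarrow$ not monadically NIP'' being a short unpacking of the definitions, and the direction ``not monadically NIP $\Rightarrow$ tuple-coding'' being the real work.

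For the easy direction, suppose $\phi(\bar x, \bar y, z)$ and infinite sets $A, B, C$ witness tuple-coding in some model $M \models T$; for each $(\bar a, \bar b) \in A \times B$ let $c_{\bar a, \bar b}$ denote the unique $c \in C$ with $\phi(\bar a, \bar b, c)$. By compactness I would pass to a sufficiently saturated elementary extension of $M$, pick a countable sequence $(\bar a_i)_{i < \omega}$ in $A$ and a family $(\bar b_S)_{S \subseteq \omega}$ in $B$, and for an arbitrary $E \subseteq \omega \times \Pc(\omega)$ define a unary predicate by $U := \{c_{\bar a_i, \bar b_S} : (i, S) \in E\}$. Since any $c \in U$ lies in $C$ and $\phi(\bar a_i, \bar b_S, c)$ for $c \in C$ forces $c = c_{\bar a_i, \bar b_S}$, the formula $\psi(\bar x, \bar y) := \exists z\,(U(z) \wedge \phi(\bar x, \bar y, z))$ satisfies $\psi(\bar a_i, \bar b_S) \iff (i,S) \in E$. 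Choosing $E = \{(i,S) : i \in S\}$ gives IP for $\psi$ in the expansion $(M, U)$, so $T$ is not monadically NIP.

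For the hard direction, assume $T$ is not monadically NIP, so some expansion of $T$ by a single unary predicate $P$ (after amalgamating finitely many predicates into one) admits a formula $\psi(\bar x, \bar y; P)$ with IP, witnessed by tuples $(\bar a_i)_{i<\omega}$ and $(\bar b_S)_{S \subseteq \omega}$. I would aim to produce a formula $\phi(\bar x, \bar y, z)$ of the original language $\Lc$ of $T$, together with infinite sets $A, B, C$ in some model of $T$, so that each pair $(\bar a, \bar b) \in A \times B$ pins down a unique $c \in C$. The heuristic is that the predicate $P$ is already serving as a bank of ``labels'' encoding membership of the IP configuration; the job is to internalize this as a definable coordinate. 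My plan would be first to apply a Ramsey / generalized-indiscernible reduction in the expanded language so that the sequences witnessing IP become mutually indiscernible with a very structured dependence of $\psi$ on $P$, then to analyze which $\Lc$-definable ``slots'' the $P$-pattern occupies along these sequences, and finally to trade the predicate for a parameter $z$ whose choice within some definable set $C$ mirrors the cell of $P$ that is being picked out at each position.

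The main obstacle is clearly the hard direction, which is the entire content of Theorem~1.1 of \cite{braunfeld2021characterizations}. The difficulty is that $\psi$ may mix $P$ with $\Lc$-formulas in an unrestricted way, so one cannot literally ``erase'' the predicate; trading $P$ for a variable requires showing that the relevant $P$-patterns along the indiscernible sequences are definable from finitely many elements in a uniform way, which in turn forces a delicate Ramsey argument together with a uniqueness check to rule out degenerate configurations (where a pair $(\bar a, \bar b)$ would be consistent with many or zero ``codes'' in $C$) and thereby secure the strict tuple-coding condition. The rest is bookkeeping, and I would lift the details from the proof of Theorem~1.1 of \cite{braunfeld2021characterizations}.
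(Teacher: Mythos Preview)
The paper does not prove this theorem; it is simply quoted from \cite{braunfeld2021characterizations} and used as a black box throughout. Your proposal therefore already goes further than the paper: you supply an argument for the easy direction and, for the hard direction, correctly identify that the substantive content is exactly Theorem~1.1 of the cited reference and defer to it---which is precisely what the paper itself does.

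One minor remark on your easy direction: as literally stated, Definition~\ref{def:coding} does not force the codes $c_{\bar a,\bar b}$ to be pairwise distinct, and your construction of $U$ implicitly needs this (otherwise some $c_{\bar a_j,\bar b_T}$ with $(j,T)\notin E$ could coincide with an element of $U$, breaking the ``only if''). In practice the paper always works with the stronger finitary form recorded in Lemma~\ref{lem:coding-finitary} and used in the proof of Lemma~\ref{stable_not_mod}, namely $\phi(\bar a_i,\bar b_j,c_{k,\ell})\Leftrightarrow(i,j)=(k,\ell)$, which does entail distinct codes; with that reading your argument goes through verbatim.
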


	The following result follows from Lemma 4.2.6 in~\cite{baldwin1985second}. Note however that this is not the exact formulation in the original paper. For more details see the discussion in~\cite{braunfeld2022monadic}, Section 2, as well as \cite{Anderson1990} and \cite[Fact 2.8]{braunfeld2022worst}.
	
\begin{theorem}\label{coding}
  For any stable theory $T$, the following are equivalent.
  \begin{itemize}
  \item $T$ is monadically stable.
  \item $T$ is monadically NIP.
  \item $T$ does not admit coding.
  \end{itemize}
\end{theorem}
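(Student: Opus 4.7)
The plan is to establish the cycle of implications: monadically stable $\Rightarrow$ monadically NIP $\Rightarrow$ $T$ does not admit coding $\Rightarrow$ monadically stable. The first two arrows are essentially formal; the last is the substantive content of the theorem and traces back to Baldwin and Shelah.

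For the first implication, every stable theory is NIP, and this property passes to expansions: if $T$ is monadically stable then every expansion of $T$ by unary predicates is stable, hence NIP, so $T$ is monadically NIP. For the second, I would apply the characterization quoted immediately above the statement, namely \cite[Theorem 1.1]{braunfeld2021characterizations}: a monadically NIP theory admits no tuple-coding. Since coding is the special case of tuple-coding with $|\xbar| = |\ybar| = 1$, it follows a fortiori that such a $T$ admits no coding.

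The substantive step is the third implication: under the standing hypothesis that $T$ is stable, absence of coding implies monadic stability. I would argue the contrapositive. Suppose $T$ is stable but some monadic expansion $T^+$, by unary predicates $P_1, \dots, P_k$, is unstable, witnessed by a $T^+$-formula $\phi(\xbar; \ybar)$ and tuples $(\abar_i)$, $(\bbar_j)$ in some model with $\phi(\abar_i; \bbar_j) \iff i \leq j$. Stability of the underlying $T$ means this half-graph cannot be $T$-definable, so the $P_i$-traces on the parameters must be essential to producing the order. The goal is then to convert this external colouring into internal data of $T$: extract infinite sets $A, B, C$ in some $T$-model and a $T$-formula $\phi(x,y,z)$ witnessing coding in the sense of Definition \ref{def:coding}, where $C$ plays the role of (indices for) the $P_i$-colourings of the parameter tuples.

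The main obstacle is carrying out this extraction, which is essentially the content of Baldwin--Shelah's Lemma~4.2.6 in~\cite{baldwin1985second}. As the excerpt notes, the precise output of that lemma is phrased somewhat differently than Definition~\ref{def:coding}, so I would additionally consult the translations in~\cite{braunfeld2022monadic},~\cite{Anderson1990}, and~\cite[Fact 2.8]{braunfeld2022worst} in order to verify that the configuration produced there really does supply a coding formula in our present sense. Once this alignment is made the cycle closes and the three conditions are equivalent.
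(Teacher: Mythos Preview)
Your proposal is correct and matches the paper's treatment: the paper does not give a self-contained proof but simply records that the result follows from Lemma~4.2.6 of \cite{baldwin1985second}, with the translation to the present formulation handled by the references \cite{braunfeld2022monadic}, \cite{Anderson1990}, and \cite[Fact~2.8]{braunfeld2022worst}. Your cycle of implications, with the two easy arrows made explicit and the substantive one deferred to exactly those sources, is precisely the intended argument.
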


\subsection{Weak finiteness notions in choiceless set theory}
\label{sec:set-theory-preliminaries}

He we will recall some weak finiteness notions from choiceless set theory and their relationship to certain model-theoretic adjectives. 

\begin{definition} Fix a set $A$.
  \begin{itemize}
  \item $A$ is \emph{Dedekind-finite} if it admits no injection from $\omega$.
  \item $A$ is \emph{power-Dedekind-finite}\footnote{This property is also called \emph{weak Dedekind-finiteness} in the literature.} if it admits no surjection onto $\omega$.
  \item $A$ is \emph{strictly Mostowski-finite} if any partial order on $A^n$ has chains of uniformly bounded length.
  \item $A$ is \emph{amorphous} if every subset of $A$ is either finite or co-finite.
  \end{itemize}
\end{definition}

It is relatively straightforward to show that any amorphous set is strictly Mostowski-finite, any strictly Mostowski-finite is power-Dedekind-finite, and any power-Dedekind-finite set is Dedekind-finite. Moreover, $A$ is power-Dedekind-finite if and only if $\Pc(A)$ is Dedekind-finite. Strict Mostowski-finiteness was introduced by Walczak-Typke in \cite{walczaktypke2005} in order to fit the correspondence in the following Fact~\ref{fact:WalczakTypkeCorrespondence}, but the other notions are classical. 

\begin{definition}
  Given a first-order theory $T$, we say that a set $A$ \emph{admits $T$} if there is a model of $T$ whose underlying set is $A$.
\end{definition}

\begin{fact}[Walczak-Typke {\cite{walczaktypke2005}}]\label{fact:WalczakTypkeCorrespondence}
  $(\ZF)$ Fix a set $A$.
  \begin{itemize}
  \item $A$ is power-Dedekind-finite if and only if every countable theory admitted by $A$ is $\omega$-categorical.
  \item $A$ is strictly Mostowski-finite if and only if every countable theory admitted by $A$ is $\omega$-categorical and NSOP.
  \item $A$ is amorphous if and only if every countable theory admitted by $A$ is $\omega$-categorical and strongly minimal.
  \end{itemize}
\end{fact}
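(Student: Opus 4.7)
The plan is to establish each of the three equivalences via both directions, leveraging the absoluteness principle already set up: for a countable theory $T$ admitted by $A$, the inner model $L[T]$ satisfies $\ZFC$ and contains $T$, so standard choice-using model-theoretic tools apply there, and sufficiently syntactic conclusions transfer back to $V$.

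The three ``if'' directions are constructive. Given a surjection $f\colon A \to \omega$, the structure $(A,(P_n)_{n<\omega})$ with $P_n = f^{-1}(n)$ has infinitely many realized $1$-types, so its theory is not $\omega$-categorical by Ryll--Nardzewski. Given a partial order $<$ on $A^n$ with chains of every finite length, encoding $<$ as a $2n$-ary relation on $A$ yields a structure whose theory has the strict order property. Given $B \subseteq A$ with both $B$ and $A\setminus B$ infinite, the structure $(A,B)$ with $B$ as a unary predicate admits a theory that is manifestly not strongly minimal.

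For the three ``only if'' directions, assume $T$ is a countable theory admitted by $A$, and we may take $T = \thy(\Afr)$ for a structure $\Afr$ on $A$. If $T$ is not $\omega$-categorical, Ryll--Nardzewski in $L[T]$ provides an $n$ and a sequence $(\phi_i(\bar{x}))_{i<\omega}$ of pairwise $T$-inequivalent formulas; this sequence is a member of $L[T] \subseteq V$ by the definable well-ordering of $L[T]$, and pairwise $T$-inequivalence is absolute. Since $T = \thy(\Afr)$, $T$-inequivalence of $\phi_i$ and $\phi_j$ coincides with $\phi_i(\Afr) \neq \phi_j(\Afr)$ as subsets of $A^n$, giving an injection $\omega \hookrightarrow \Pc(A^n)$ and thereby showing that $A^n$ is not power-Dedekind-finite. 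The standard $\ZF$-lemma that power-Dedekind-finiteness is closed under finite products (proved by slicing a hypothetical surjection $A^n \to \omega$) then forces $A$ itself to fail power-Dedekind-finiteness. The SOP case follows the same pattern: an SOP-witnessing formula in $L[T]$ produces, by compactness (absolute for countable languages), chains of every finite length in a definable partial order on $\Afr^m$ for some $m$, yielding a partial order on $A^m$ with unbounded chains. For strong minimality, amorphousness makes every definable subset of $\Afr$ (in one variable, with parameters) finite or cofinite; the $\omega$-categoricity coming from the first equivalence limits us to finitely many parameter-types, which uniformises the finite-or-cofinite bounds across parameters into a first-order property transferring to strong minimality of $T$ in every model.

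The main obstacle is the first ``only if'' direction, with two essential ingredients: carefully verifying that the sequence of pairwise $T$-inequivalent formulas obtained in $L[T]$ descends to $V$ by absoluteness, and establishing the $\ZF$-closure of power-Dedekind-finiteness under finite products. Once these are in place, the remaining equivalences follow via the same template, substituting the appropriate model-theoretic ingredient (the strict order property, strong minimality) for Ryll--Nardzewski in each case.
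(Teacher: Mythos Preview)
The paper does not prove this statement; it is stated as a \emph{Fact} attributed to Walczak-Typke \cite{walczaktypke2005} with no argument given. There is therefore no in-paper proof to compare your proposal against.

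Your argument is nonetheless sound, and it is written in the $L[T]$-absoluteness style that the paper develops in Section~\ref{sec:growth-rates-in-weakly-finite-sets} for its own results (so in a sense you are anticipating the paper's later machinery rather than reproducing Walczak-Typke's original proof). The one step that genuinely requires care is the \ZF-closure of power-Dedekind-finiteness under finite products. Your ``slicing'' hint can be made precise as follows: given a surjection $f\colon A\times A^{n-1}\to\omega$, send each $n\in\omega$ to the $A$-projection of $f^{-1}(n)$, obtaining a map $\omega\to\Pc(A)$; if its range is infinite then $\Pc(A)$ is Dedekind-infinite, and if its range is finite then some fibre is infinite and a single choice of $a\in A$ yields a surjection $A^{n-1}\to\omega$, so one finishes by induction. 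For the amorphous direction, your uniformization via $\omega$-categoricity is correct but worth spelling out: the formulas $\exists^{\geq k}x\,\varphi(x,\bar y)\wedge\exists^{\geq k}x\,\neg\varphi(x,\bar y)$ must eventually stabilize up to $T$-equivalence, so failure of a uniform bound would already place an infinite co-infinite definable subset inside $\Afr$ itself, contradicting amorphousness. Finally, for the second and third equivalences you are tacitly reducing the sub-case ``$T$ not $\omega$-categorical'' to the first equivalence via the easy implications amorphous $\Rightarrow$ strictly Mostowski-finite $\Rightarrow$ power-Dedekind-finite, which the paper records in the preliminaries without proof.
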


Walczak-Typke also proves similar statements about other finiteness notions inspired by the close connection between power-Dedekind-finite sets in \ZF{} and model-theoretic properties, including MT-rankedness, introduced in \cite{mendick2003} by analogy with Morley rank and $\omega$-stability, and  o-amorphicity, introduced in \cite{creed2000} by analogy with o-minimality.

\section{Labelled growth rates}
\subsection{Monadically stable structures}
Our upper bound on the labelled growth rate of an $\omega$-categorical monadically stable structure will go through exponential generating functions, which we now recall.

\begin{definition}
    The \emph{exponential generating function} of a sequence $(a_n)_{n\in \omega}$ is defined to be the formal power series $\eg(a_n)(x)=\sum_{i=0}^\infty a_n\frac{x^n}{n!}$.
\end{definition}

	Note that if $EG(a_n)(x)$ converges for some $c$ then $\lim_{n\rightarrow \infty}a_nc^n/n!=0$, in particular $a_n<n!/c^n$ if $n$ is large enough.

\begin{definition}
	Let $G\leq \sym(X)$ and $H\leq \sym(Y)$ be permutation groups with $X\cap Y=\emptyset$.
\begin{itemize}
\item The \emph{direct product} of $G$ and $H$, denoted by $G\times H$, is the group of all permutations $\sigma\in \sym(X\cup Y)$ such that $\sigma$ preserves the subsets $X$ and $Y$, and $\sigma|_X\in G, \sigma|_Y\in H$.
\item The \emph{wreath product} of $G$ and $H$, denoted by $G\wr H$, is the the group of all permutations $\sigma \in \sym(X\times Y)$ such that there exists some $\beta\in H$ and $\alpha_y: y\in Y$ such that $\sigma(x,y)=(\alpha_y(x),\beta(y))$.
\end{itemize}
\end{definition}

We now describe how we may use these operations to inductively construct the automorphism group of any $\omega$-categorical monadically stable structure.

    \begin{fact}[{\cite[Theorem 3.42]{bodor2024classification}}] \label{mstab induct}
	A countable $\omega$-categorical structure $\fa$ is monadically stable if and only if $\aut(\fa)$, up to relabelling of the elements of the domain set, can be built up from finite domain permutation groups in finitely many steps by taking finite direct products, wreath products with $S_{\omega}$, and closed supergroups.
    \end{fact}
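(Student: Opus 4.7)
The plan is to prove both directions of this characterization. For the easy direction, that every group built from the given operations is the automorphism group of an $\omega$-categorical monadically stable structure (up to relabelling), I would verify that each operation preserves both $\omega$-categoricity and monadic stability. Finite permutation groups clearly correspond to finite structures, which are vacuously monadically stable. Direct products correspond to disjoint unions marked by a unary predicate: any unary expansion of such a union is an expansion of each factor, so stability transfers from the factors. Wreath products $G \wr S_\omega$ correspond to a ``blowup'' of the $G$-structure by an equivalence relation with infinitely many infinite classes, all symmetrically arranged; to check preservation, I would verify that any half-graph witnessed in a unary expansion could be pushed to one of the factors, contradicting monadic stability there (using that the $S_\omega$-layer is strongly minimal and hence monadically stable). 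Closed supergroups correspond to reducts, and any unary expansion of a reduct is also a unary expansion of the original structure, so monadic stability of reducts is immediate.

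For the hard direction, I would use the coordinatization theory of $\omega$-categorical monadically stable structures. By Theorem~\ref{coding}, monadic stability for stable $T$ is equivalent to the non-existence of coding. Combined with $\omega$-categoricity (hence total transcendence and finite Morley rank in the stable case, which follows from standard results of Cherlin–Harrington–Lachlan), this forces each type to be analyzable in a very constrained way: one expects every orbit to decompose along a definable equivalence relation whose classes and quotient are ``simpler'' monadically stable structures. The induction would proceed on Morley rank (or $U$-rank): given $\fa$ of positive rank, find a $\emptyset$-definable equivalence relation $E$ with infinitely many classes such that either the quotient or the classes have strictly smaller rank. The quotient $\fa/E$ gives, after applying the induction hypothesis, a group of the prescribed form; the fibre structure, symmetrically permuted by the residual action, contributes a wreath product factor with $S_\omega$. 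When the induction bottoms out at rank $0$ (finite orbits), we reach finite permutation groups.

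The main obstacle is the hard direction, specifically producing the correct definable equivalence relation at each step and ensuring that the ``residual'' action on classes is genuinely $S_\omega$ (up to closed supergroup) rather than some more complicated action. This requires the full strength of the non-coding hypothesis: without coding, one can show that the action on any infinite class of a minimal-rank definable equivalence relation must be highly transitive in a suitable sense, so the relevant factor is a closed supergroup of $S_\omega$ acting naturally. Handling the interaction between multiple orbits of the same rank introduces the direct product operation, and showing that finitely many iterations suffice relies on the finite Morley rank of $\omega$-categorical stable theories. This is essentially the content of Lachlan's classification \cite{lachlan1992}, refined in \cite{bodor2024classification} to give the precise group-theoretic statement cited here, so I would ultimately appeal to that line of work rather than reprove it from scratch.
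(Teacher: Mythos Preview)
The paper does not prove this statement at all: it is stated as a \emph{Fact} and attributed to \cite[Theorem 3.42]{bodor2024classification}, then used as a black box in the proof of Theorem~\ref{ms_slow}. So there is no ``paper's own proof'' to compare against; your proposal is instead a sketch of how the cited result itself might be established.

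As a sketch of the cited theorem, your outline is broadly reasonable but glosses over the genuine difficulties. The easy direction is fine. For the hard direction, your plan to induct on rank and extract a wreath-product decomposition at each stage is the right shape, but the step where you assert that the action on the quotient by a minimal equivalence relation ``must be $S_\omega$ up to closed supergroup'' is precisely where the work lies, and non-coding alone does not obviously give this; one needs the finer structural analysis of \cite{lachlan1992} and \cite{bodor2024classification} (tree-decompositions of the automorphism group, control over how the orbits nest, etc.). You acknowledge this yourself in the last paragraph by ultimately deferring to those references---which is exactly what the paper does, only without the preamble. For the purposes of this paper, simply citing \cite[Theorem 3.42]{bodor2024classification} is the intended and appropriate move.
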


    At the level of structures rather than automorphism groups, finite direct product corresponds to taking the disjoint union of finitely many structures and placing each in its own new unary predicate, while the wreath product with $S_{\omega}$ corresponds to taking an equivalence relation with infinitely many classes and placing in each class a copy of the given structure, and a closed supergroup corresponds to a reduct.

    We next analyze how these operations affect the exponential generating function.
	
\begin{proposition}\label{prod_wr}
	Let $G$ and $H$ be some oligomorphic permutation groups. Then the following hold.
\begin{enumerate}
\item $\eg(\oi_n(G\times H))=\eg(\oi_n(G))\eg(\oi_n(H))$,
\item $\eg(\oi_n(G\wr H))(x)=\eg(\oi_n(H))(\eg(\oi_n(G)(x)-1)$, in particular $\eg(\oi_n(G\wr S_{\omega}))(x)=\exp(\eg(\oi_n(G)-1))$.
\end{enumerate}
\end{proposition}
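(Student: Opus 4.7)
The plan is to derive each identity from a combinatorial orbit decomposition corresponding to a standard operation on exponential generating functions: the labelled product formula for (1) and the compositional formula for (2).

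For (1), I would decompose an injective $n$-tuple $\bar{z} \in (X \sqcup Y)^{(n)}$ via the subset $S \subseteq [n]$ recording which positions lie in $X$. Since $G \times H$ preserves the decomposition $X \sqcup Y$, the $(G\times H)$-orbit of $\bar{z}$ is determined by $S$ together with the $G$-orbit of the subtuple indexed by $S$ and the $H$-orbit of the subtuple indexed by $[n]\setminus S$. Summing over $k = |S|$ gives the convolution identity
$$\oi_n(G \times H) = \sum_{k=0}^n \binom{n}{k}\oi_k(G)\oi_{n-k}(H),$$
which is exactly the coefficient recursion for multiplication of exponential generating functions, establishing (1).

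For (2), the key step is to organize an injective $\bar{z} \in (X\times Y)^{(n)}$ by fibres: group the positions of $\bar{z}$ according to their $Y$-coordinate, yielding a partition $\mathcal{P}$ of $[n]$ into nonempty blocks. Injectivity of $\bar z$ forces the $X$-coordinates within any block $B$ to be distinct, so they form an injective $|B|$-tuple in $X$; the $Y$-coordinates of distinct blocks give an injective map $\mathcal{P} \hookrightarrow Y$. The action of $G \wr H$ permutes the $Y$-labels of the blocks via $H$ and acts on the $X$-tuple in each fibre via an independent copy of $G$. I would then argue that the orbit of $\bar z$ is determined by (i) the partition $\mathcal{P}$, (ii) the $H$-orbit of the induced injection $\mathcal{P}\hookrightarrow Y$, and (iii) the $G$-orbit of the $X$-tuple on each block. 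Item (ii) contributes $\oi_{|\mathcal{P}|}(H)$ independently of how we enumerate $\mathcal P$, and (iii) contributes $\oi_{|B|}(G)$ per block. Hence
$$\oi_n(G \wr H) = \sum_{\mathcal{P}} \oi_{|\mathcal{P}|}(H) \prod_{B \in \mathcal{P}} \oi_{|B|}(G),$$
where the sum ranges over unordered set partitions of $[n]$ into nonempty blocks. This is the coefficient recursion for the labelled compositional formula; since $\oi_0(G)=1$, the requirement that blocks be nonempty corresponds to subtracting $1$ from $\eg(\oi_n(G))$, yielding $\eg(\oi_n(G \wr H))(x)=\eg(\oi_n(H))(\eg(\oi_n(G))(x)-1)$. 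The special case $H = S_{\omega}$ is then immediate, since $S_\omega$ acts $n$-transitively on injective tuples, giving $\oi_n(S_\omega) = 1$ and $\eg(\oi_n(S_\omega))(x) = e^x$.

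The main point requiring care is in (2): verifying that labelling the unordered blocks by the $H$-orbit of the induced injection $\mathcal P\hookrightarrow Y$ is genuinely well defined and contributes exactly $\oi_{|\mathcal P|}(H)$, since any two enumerations of $\mathcal P$ differ by an $S_m$-action that commutes with $H$. Once this is checked, part (2) is a direct application of the labelled compositional formula, and the formula for $G \wr S_\omega$ falls out as a corollary.
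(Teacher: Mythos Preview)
Your argument is correct and is precisely the standard combinatorial derivation of these identities; the paper does not give an independent proof but simply refers to Cameron~\cite{cameron2000sequences}, Section~4, where this same orbit-decomposition argument appears. In other words, you have written out what the paper cites.
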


\begin{proof}
	See for instance~\cite{cameron2000sequences}, Section 4.
\end{proof}

	Proposition~\ref{prod_wr} immediately implies that following.

\begin{corollary}\label{prod_wr_slow}
	Let $G\leq \sym(X)$ and $H\leq \sym(Y)$ be some oligomorphic permutation groups such that the exponential generating functions of the sequences $\oi_n(G)$ and $\oi_n(H)$ both converge at $c$. Then the same holds for the sequences $\oi_n(G\times H)$ and $\oi_n(G\wr S_{\omega})$.
\end{corollary}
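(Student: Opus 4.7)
The plan is to deduce both statements directly from the EGF identities in Proposition~\ref{prod_wr}, using nothing more than the fact that these generating functions have nonnegative coefficients (since $\oi_n(G), \oi_n(H) \geq 0$), so all convergence at a point $c > 0$ is automatically absolute.

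First I would handle the direct product. By Proposition~\ref{prod_wr}(1), $\eg(\oi_n(G \times H))(x) = \eg(\oi_n(G))(x) \cdot \eg(\oi_n(H))(x)$. The coefficients of the right-hand side are, by the Cauchy product formula, the same as those of the formal power series product, and since both factors have nonnegative coefficients and each converges absolutely at $c$, their Cauchy product also converges absolutely at $c$. Hence $\eg(\oi_n(G \times H))$ converges at $c$.

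Next I would handle the wreath product. By Proposition~\ref{prod_wr}(2), $\eg(\oi_n(G \wr S_\omega))(x) = \exp(f(x))$, where $f(x) = \eg(\oi_n(G))(x) - 1 = \sum_{n \geq 1} \oi_n(G)\, x^n/n!$, a power series with nonnegative coefficients and zero constant term. Write
\[
\exp(f(x)) = \sum_{k \geq 0} \frac{f(x)^k}{k!},
\]
expanded as a formal power series. Since every $f(x)^k$ has nonnegative coefficients, the coefficient of $x^n/n!$ in $\exp(f)$ is a nonnegative real obtained by summing nonnegative contributions from each $k$. Evaluating at $x = c$, monotone convergence (Tonelli) lets me exchange the sum over $k$ with the sum over $n$ hidden inside each $f(c)^k$, yielding $\exp(f)(c) = \sum_{k \geq 0} f(c)^k/k! = e^{f(c)}$, which is finite because $f(c) < \infty$ by hypothesis. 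Thus $\eg(\oi_n(G \wr S_\omega))$ converges at $c$.

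I do not expect any serious obstacle: the entire argument is bookkeeping with power series with nonnegative coefficients, and the only slightly subtle point is justifying that the formal-power-series composition $\exp(f)$ agrees with the numerical value $e^{f(c)}$ on evaluation, which is immediate from Tonelli. Note also that the wreath product statement only uses convergence for $G$, not for $H$, which matches the asymmetric role of the two groups in the wreath product.
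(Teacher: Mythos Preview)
Your proposal is correct and takes essentially the same approach as the paper, which simply states that the corollary follows immediately from Proposition~\ref{prod_wr}. You have just filled in the routine details (Cauchy product for the direct product, Tonelli for the composition with $\exp$) that the paper leaves implicit.
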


    Using this description we obtain the following.
	
\begin{theorem}\label{ms_slow}
	Let $\fa$ be a countable $\omega$-categorical monadically stable structure. Then the power series $\eg(\oi_n(\fa))(x)$ converges everywhere. In particular, for all $c\in \mathbb{R}$ we have $\oi_n(\fa)<n!/c^n$ if $n$ is large enough.
\end{theorem}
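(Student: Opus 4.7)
The plan is to proceed by induction on the inductive construction of $\aut(\fa)$ guaranteed by Fact~\ref{mstab induct}. Specifically, I would prove by induction on the construction that the class of oligomorphic permutation groups whose labelled growth rate has entire exponential generating function is closed under the three building operations appearing there: finite direct products, wreath products with $S_\omega$, and closed supergroups, starting from the class of finite-domain permutation groups.

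The base case is immediate: if $G \leq \sym(X)$ with $X$ finite of size $k$, then $\oi_n(G) = 0$ for all $n > k$, so $\eg(\oi_n(G))(x)$ is a polynomial and hence entire. For finite direct products and wreath products with $S_\omega$, the plan is simply to apply Corollary~\ref{prod_wr_slow} at every $c \in \R$: convergence at each individual $c$ is preserved by these two operations, so convergence everywhere is preserved as well. The closed supergroup case requires a small separate observation but no real difficulty: if $G \leq H$ act on the same set $X$, then each $H$-orbit on $X^{(n)}$ is a union of $G$-orbits, so $\oi_n(H) \leq \oi_n(G)$; since the coefficients of $\eg(\oi_n(H))$ are then bounded termwise by those of $\eg(\oi_n(G))$, the former is entire whenever the latter is.

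The ``in particular'' clause will then follow directly from the remark immediately after the definition of the EGF: once we know $\eg(\oi_n(\fa))$ converges at every $c \in \R$, the terms $\oi_n(\fa) c^n / n!$ tend to $0$ for each fixed $c > 0$, yielding $\oi_n(\fa) < n!/c^n$ for all sufficiently large $n$. I do not expect any substantive obstacle here: the statement is essentially a direct combination of the structural characterization in Fact~\ref{mstab induct} with the generating function identities in Proposition~\ref{prod_wr}, and the closed-supergroup step is the only point not already packaged into those results.
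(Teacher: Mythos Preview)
Your proposal is correct and matches the paper's approach exactly: the paper's proof is the one-line ``straightforward induction argument using Fact~\ref{mstab induct} and Corollary~\ref{prod_wr_slow}'' that you spell out. You are in fact slightly more careful than the paper, since you explicitly handle the closed-supergroup step (via the trivial inequality $\oi_n(H)\leq \oi_n(G)$ for $G\leq H$), which the paper leaves implicit.
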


\begin{proof}
	Clearly, the statement is true for finite structures. Otherwise, the statement follows from a straightforward induction argument using Fact \ref{mstab induct} and Corollary~\ref{prod_wr_slow}.
\end{proof}

\begin{definition}
	We say that a structure $\fa$ is \emph{cellular} if there exists a $\emptyset$-definable equivalence relation $E$ such that the action of $\aut(\fa)$ on $A/E$ contains a finite direct product of symmetric groups of at most countable sets.
\end{definition}

	Different characterizations of cellular structures are given in~\cite{bodor2024classification}, in particular we know that all cellular structures are monadically stable.
In terms of labelled growth we know the following.
	
\begin{theorem}[~\cite{bodor2024classification}, Theorem 3.54]\label{cd}
	A structure $\fa$ is cellular if and only if there exist $c,d\in \R, d<1$ such that $\oi_n(\fa)<cn^{dn}$ for all $n$.
\end{theorem}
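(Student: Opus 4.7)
The plan is to prove the two directions separately, exploiting the inductive construction of $\aut(\fa)$ for monadically stable $\omega$-categorical $\fa$ given by Fact~\ref{mstab induct}, together with the generating-function identities of Proposition~\ref{prod_wr}.

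For the forward direction, I would first verify that cellular structures form a restricted subclass inside this inductive hierarchy: up to relabelling, they are precisely the structures built from finite permutation groups using finite direct products and wreath products of the form $F \wr S_\omega$ with $F$ finite, followed by reducts (i.e., $S_\omega$ only ever appears as the ``outer'' group). Iterating Proposition~\ref{prod_wr}, the EGF $\eg(\oi_n(\fa))(x)$ of such a structure has the shape $Q(x)\exp(P(x))$, where $P,Q$ are polynomials and $\deg P = k$ is the maximal size of a class of the outermost $\emptyset$-definable equivalence relation $E$. Applying the saddle-point method (or Hayman's theorem) to $\exp(P(x))$, the positive saddle $r$ satisfies $rP'(r) \sim n$, so $r \sim (cn)^{1/k}$, and the $n$-th coefficient is of order $\exp(P(r))/r^n$ up to a subexponential prefactor. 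Multiplying by $n!$ and invoking Stirling yields $\oi_n(\fa) \leq C \cdot n^{n(1-1/k)} \cdot K^n$, which is bounded above by $c n^{dn}$ for any fixed $d \in (1-1/k, 1)$.

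For the backward direction I would argue the contrapositive: assume $\fa$ is not cellular and derive that $\oi_n(\fa)$ grows at least like $n^{n(1-o(1))}$, contradicting $\oi_n(\fa) < cn^{dn}$ with $d<1$. If $\fa$ is not monadically stable, an analysis of tuple-coding (as in Theorem~\ref{thm:main}(3)) produces enough distinct types of injective $n$-tuples to force $\oi_n(\fa) \geq n!/p(n)$, which grows faster than $n^{dn}$ for every $d<1$ by Stirling. If $\fa$ is monadically stable but not cellular, then by Fact~\ref{mstab induct} the construction of $\aut(\fa)$ must involve a wreath product of the form $G \wr S_\omega$ with $G$ acting on an infinite set (otherwise one stays in the cellular regime). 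Restricting to a single orbit of this outermost wreath product embeds, up to reduct, a copy of $S_\omega \wr S_\omega$ into $\aut(\fa)$; its EGF is $\exp(e^x - 1)$, the EGF of the Bell numbers. A subadditivity argument then gives $\oi_n(\fa) \geq B_n$, and since $B_n = n^{n(1-o(1))}$ the bound $cn^{dn}$ with $d<1$ is again violated.

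The main obstacle is the backward direction in the monadically stable case: one must isolate cellularity at the level of the inductive decomposition, showing that any structure built using only direct products, $F \wr S_\omega$ for finite $F$, and reducts is cellular, while any use of $G \wr S_\omega$ for infinite $G$ survives under taking closed supergroups in a way that preserves the $S_\omega \wr S_\omega$ pattern. The careful interaction between reducts and wreath products is the technical heart; once it is handled, the Bell-number lower bound falls out essentially from the identity $\eg(\oi_n(S_\omega \wr S_\omega)) = \exp(e^x - 1)$ combined with the monotonicity inequality $\oi_n(\fa) \leq \oi_{n+1}(\fa)$ observed after the definition of $\oi_n$.
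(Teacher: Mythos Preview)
The paper does not prove this theorem; it is quoted verbatim as \cite[Theorem~3.54]{bodor2024classification} and used as a black box. So there is no ``paper's own proof'' to compare against, and your proposal is really an attempt to reconstruct the cited argument.

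Your forward direction is broadly on the right track, but it presupposes exactly the structural fact that carries all the weight: that cellular structures are, up to relabelling, those built from finite groups using only direct products, wreath products $F\wr S_\omega$ with $F$ \emph{finite}, and closed supergroups. This characterization is not the definition given in the present paper (which is phrased via a $\emptyset$-definable equivalence relation $E$ and the action on $A/E$), and establishing the equivalence is essentially the content of the classification in \cite{bodor2024classification}. Once granted, your EGF and saddle-point analysis is fine.

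The genuine gap is in your backward direction for the monadically stable, non-cellular case. You assert that some occurrence of $G\wr S_\omega$ with $G$ infinite in the inductive build-up forces an $S_\omega\wr S_\omega$ pattern to survive \emph{after} passing to closed supergroups. This inference runs the wrong way: taking a closed supergroup can collapse orbits arbitrarily (in the extreme, all the way down to $\sym(\omega)$), so there is no a~priori reason the nested equivalence relation persists. You flag this yourself as ``the technical heart'', but it is not a detail that can be filled in routinely; it is again the substance of the structure theory in \cite{bodor2024classification}. Note that the present paper obtains the Bell-number lower bound by a different and cleaner route (Lemma~\ref{not_cell} and Corollary~\ref{not_cell2}): rather than tracking wreath products through reducts, one exhibits directly a $\emptyset$-definable equivalence relation on singletons with infinitely many infinite classes, from which $\oi_n(\fa)\geq B_n$ is immediate. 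If you want a self-contained argument for the backward direction, that is the lemma to aim for.
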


\begin{notation}
    We denote by $B_n$ the $n^{th}$ (first-order) Bell number, i.e., the number of partitions of $[n]$ (see the Sloane sequence~\href{https://oeis.org/A000110}{A000110}).\\
    We denote by $B_n^{(2)}$ the $n^{th}$ second-order Bell number, i.e., the number or pairs of partitions $(P,Q)$ of $[n]$ such that $P$ is a refinement of $Q$ (see the Sloane sequence~\href{https://oeis.org/A000258}{A000258}).
\end{notation}

\begin{observation}\label{bb2}
	The second-order Bell numbers can be calculated by the formula $B_n^{(2)}=\sum_{k=1}^n\bracenom{n}{k}B_n$.
\end{observation}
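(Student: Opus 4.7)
The plan is to partition the pairs $(P,Q)$ counted by $B_n^{(2)}$ according to the number of blocks of the refining partition $P$. (As written, the right-hand side appears to contain a typo: the natural identity, parallel to Observation~\ref{stirling}, is $B_n^{(2)}=\sum_{k=1}^n\bracenom{n}{k}B_k$, which is what I will prove.)

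Fix $k \in \{1,\dots,n\}$. First I would count partitions $P$ of $[n]$ into exactly $k$ blocks; by definition this contributes $\bracenom{n}{k}$. Then, with $P$ fixed, I would count the partitions $Q$ of $[n]$ of which $P$ is a refinement. The key observation is that such a $Q$ is determined by how the $k$ blocks of $P$ are grouped: two elements of $[n]$ lie in the same block of $Q$ if and only if their $P$-blocks lie in the same group. Conversely, any partition of the $k$-element set of $P$-blocks produces, via union within groups, a partition $Q$ of $[n]$ that is coarsened by $P$. This sets up a bijection between the coarsenings $Q$ of $P$ and the partitions of the block-set of $P$, so the number of valid $Q$ is $B_k$.

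Summing the product $\bracenom{n}{k}\cdot B_k$ over $k$ from $1$ to $n$ then enumerates all pairs $(P,Q)$ with $P$ refining $Q$, giving the claimed identity. The only mild subtlety is verifying the bijection in the previous paragraph, but this is immediate from the definition of refinement: coarsenings of $P$ are in canonical correspondence with equivalence relations on the set of $P$-classes, i.e., with partitions of that $k$-element set. So no real obstacle arises, and the argument is purely a double-counting.
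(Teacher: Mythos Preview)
Your argument is correct, and you are right that the statement contains a typo: the summand should be $B_k$, not $B_n$, as is confirmed by the paper's own later use of the identity in the proof of Proposition~\ref{prop:omega-cat-strongly-minimal-char}, where it writes $\sum_{k=1}^n \bracenom{n}{k}B_k=B_n^{(2)}$.

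As for comparison with the paper: there is nothing to compare. The paper states this as an observation without proof. Your double-counting argument---partitioning pairs $(P,Q)$ by the number $k$ of blocks of the finer partition $P$, and then identifying coarsenings $Q$ of $P$ with partitions of the $k$-element set of $P$-blocks---is the standard justification and is exactly what the reader is expected to supply. The parallel you draw with Observation~\ref{stirling} is also apt: that observation expresses $s_n$ in terms of the $\ell_k$ via Stirling numbers, and your formula is the special case where $\ell_k=B_k$ (i.e., for the structure $(\omega;E)$ with one equivalence relation with infinitely many infinite classes), recovering $s_n=B_n^{(2)}$ as noted in the example immediately following.
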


	Both the first and second-order Bell numbers appear as orbit growth functions, as the following example shows.
	
\begin{example}
	Let $\fa=(\omega;E)$ where $E$ is an equivalence relation with infinitely many infinite classes. Then $\oi_n(\fa)=B_n$ and $\oo_n(\fa)=B_n^{(2)}$.
\end{example}

\begin{remark}\label{cd_bell}
	Note that the upper bound $cn^{dn}$ where $d<1$ is related to the \emph{allied Bell numbers}, that is the number of partitions of $[n]$ where there is a fixed upper bound on the parts, see the discussion in~\cite{bodirsky2021permutation}, Section 6.1. In particular, we know that if $d<1$ then $cn^{dn}<B_n$ if $n$ is large enough.
\end{remark}

	Having given the upper bound for cellular and monadically stable structures, we now continue to the lower bound for monadically stable structures that are not cellular, and afterwards for structures that are not monadically stable.

\begin{lemma}\label{not_cell}
	If $\fa$ is a $\omega$-categorical and monadically stable but not cellular, then $\fa$ has a $\emptyset$-definable equivalence relation on singletons with infinitely many infinite classes.
\end{lemma}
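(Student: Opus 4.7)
The plan is to prove this by induction on the construction of $\aut(\fa)$ from Fact \ref{mstab induct}, handling each of the three operations (finite direct product, wreath product with $S_\omega$, and closed supergroup) separately. The base case of $\fa$ finite is vacuous since finite structures are cellular.

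For the direct product case, $\aut(\fa) = G_1 \times \dots \times G_k$ corresponds to a disjoint union $\fa = \fa_1 \sqcup \dots \sqcup \fa_k$ with each $\fa_i$ carrying its own unary predicate. A finite direct product of cellular structures is again cellular (checked from Theorem \ref{cd} using that the labelled growth rate of a disjoint union is bounded by $k^n$ times a product of growth rates of the parts, and $k^n n^{dn} < n^{d'n}$ for any $d'>d$ and large $n$), so at least one $\fa_i$ is non-cellular; by induction it has a $\emptyset$-definable equivalence relation $E_i$ with infinitely many infinite classes, and extending $E_i$ to $\fa$ by declaring elements outside $\fa_i$ self-related yields the required relation on $\fa$. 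For the wreath product case $\aut(\fa) = \aut(\fa_0) \wr S_\omega$, the projection onto the $\omega$-coordinate yields a $\emptyset$-definable equivalence relation with infinitely many classes, each isomorphic to $\fa_0$; if $\fa_0$ is infinite these are infinitely many infinite classes, while if $\fa_0$ is finite then $\fa$ is itself cellular (with the same witness), contradicting the hypothesis.

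The main obstacle is the closed supergroup case: $\aut(\fa) \supseteq \aut(\fa_0)$ makes $\fa$ a reduct of $\fa_0$. Since $\oi_n$ is monotone under reducts, Theorem \ref{cd} implies reducts of cellular structures are cellular, so $\fa_0$ is non-cellular and by induction has a $\emptyset$-definable equivalence relation $E_0$ with infinitely many infinite classes. The difficulty is that $E_0$ need not be $\aut(\fa)$-invariant, so may fail to be $\emptyset$-definable in $\fa$. The plan is to pass to $E$, the coarsest $\aut(\fa)$-invariant equivalence relation containing $E_0$ (obtained as the intersection of all $\aut(\fa)$-invariant equivalence relations containing $E_0$), which is automatically $\emptyset$-definable in $\fa$, with each class, being a union of $E_0$-classes, infinite. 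If $E$ has infinitely many classes, we are done.

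Otherwise $E$ has only finitely many infinite classes $C_1, \dots, C_k$, and by pigeonhole some $C_i$ contains infinitely many $E_0$-classes, so that $E_0$ restricted to $C_i$ still has infinitely many infinite classes in $\fa_0|_{C_i}$. One would then recurse on $\fa|_{C_i}$, which is a reduct of $\fa_0|_{C_i}$, still $\omega$-categorical and monadically stable, and must be non-cellular for some $i$ since a finite union of cellular structures (with $\aut(\fa)$ permuting the pieces) is cellular by the growth-rate calculation above. A $\emptyset$-definable equivalence relation with infinitely many infinite classes found in $\fa|_{C_i}$ can then be transported across the $\aut(\fa)$-orbit of $C_i$ to produce one on $\fa$. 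The main technical challenge is ensuring this recursion terminates, since the restricted structure $\fa|_{C_i}$ need not admit a strictly simpler construction in the sense of Fact \ref{mstab induct}; I expect to handle this with an ordinal invariant argument, for instance using Morley rank (available since $\omega$-categorical monadically stable theories are $\omega$-stable) as the quantity that strictly decreases at each recursive step.
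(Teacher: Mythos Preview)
The paper's own proof is not self-contained: it simply invokes the Lachlan and Bodor classifications of $\omega$-categorical monadically stable structures. Your plan to extract a direct argument from Fact~\ref{mstab induct} is thus more ambitious than what the paper does, and the finite-direct-product and wreath-product steps go through as you say.

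The gap you flag in the closed-supergroup step is genuine, and Morley rank does not fix it. The problem is not slow progress but \emph{no} progress: the $\aut(\fa)$-closure $E$ of the inherited $E_0$ can be the full relation $A\times A$, so there is a single class $C_1=A$ and your recursion returns to $\fa$ itself; no invariant on $\fa|_{C_1}=\fa$ can decrease. This is not an idle worry. Take $\fa_0$ on $\omega^3$ with nested relations $E_1\subsetneq E_0$ (classes the fibres over $\omega^2$ and over $\omega$ respectively), so that $\aut(\fa_0)$ is a threefold iterated wreath product of $S_\omega$, and let $\fa$ be the reduct keeping only $E_1$. Your own wreath-product step, applied to $\fa_0$, hands back precisely the \emph{outer} relation $E_0$ as the witness; but $\aut(\fa)$ permutes the $E_1$-classes transitively while $E_0$ relates elements of distinct $E_1$-classes, so the $\aut(\fa)$-closure of $E_0$ is all of $A\times A$. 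Of course $E_1$ would have been a good choice here, and indeed $\aut(\fa)$ admits a supergroup-free construction altogether, but nothing in your argument locates either escape route: the induction hypothesis only promises \emph{some} witness in $\fa_0$, and you have no mechanism for rejecting a bad one or for switching to a better construction. Closing this gap seems to require exactly the canonical chain of invariant congruences that the cited classifications provide, which is why the paper appeals to them directly.
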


\begin{proof}
	This follows from either Lachlan's description of $\omega$-categorical monadically stable structures in~\cite{lachlan1992}; or from the explicit description of hereditarily cellular groups given in~\cite{bodor2024classification}.
\end{proof}

	Lemma~\ref{not_cell} immediately implies the following.

\begin{corollary}\label{not_cell2}
	If $\fa$ is a $\omega$-categorical and monadically stable but not cellular then $\oi_n(\fa)\geq B_n$.
\end{corollary}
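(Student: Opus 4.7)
The plan is to apply Lemma~\ref{not_cell} directly: we obtain a $\emptyset$-definable equivalence relation $E$ on $\fa$ with infinitely many infinite classes. Since $E$ is $\aut(\fa)$-invariant, for any injective tuple $\bar a = (a_1, \dots, a_n) \in A^{(n)}$ the partition $P(\bar a)$ of $[n]$ defined by $i \sim j \iff a_i \mathrel{E} a_j$ is an orbit invariant of $\bar a$ under $\aut(\fa)$.

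Next, I would argue that every partition of $[n]$ actually occurs as $P(\bar a)$ for some $\bar a \in A^{(n)}$. Given any partition $\Pi = \{I_1, \dots, I_k\}$ of $[n]$, I pick $k$ distinct $E$-classes $C_1, \dots, C_k$ (using that there are infinitely many classes), and within each $C_j$ pick $|I_j|$ distinct elements (using that each class is infinite), then assign them to the positions in $I_j$. The resulting tuple lies in $A^{(n)}$ and realizes the pattern $\Pi$.

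Combining these two observations, the map $\bar a \mapsto P(\bar a)$ descends to a well-defined surjection from the set of $\aut(\fa)$-orbits on $A^{(n)}$ onto the set of partitions of $[n]$. Therefore $\oi_n(\fa) \geq B_n$, as required.

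There is no real obstacle here: the entire content is packaged into Lemma~\ref{not_cell}, and what remains is the standard observation that a definable equivalence relation with infinitely many infinite classes produces at least $B_n$ labelled orbits on distinct $n$-tuples (the same calculation exhibited for the equivalence relation structure $(\omega; E)$ in the example immediately preceding this corollary).
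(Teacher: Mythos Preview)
Your argument is correct and is exactly the unpacking of what the paper means by ``Lemma~\ref{not_cell} immediately implies the following'': the $\emptyset$-definable equivalence relation with infinitely many infinite classes makes the induced partition of $[n]$ an $\aut(\fa)$-invariant, and every partition is realized, giving at least $B_n$ orbits. There is nothing to add.
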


\subsection{Structures that are not monadically stable}

	We start with a few easy observations about the labelled growth profile of $\omega$-categorical structures.
	
\begin{lemma}\label{add_constants_gen}
	Let $G\leq \sym(A)$ be an oligomorphic group for some a countably infinite set $A$, and let $a\in A$. Then $\oi_n(G_a)\leq (n+1)\oi_{n+1}(G)$.
\end{lemma}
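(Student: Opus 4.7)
The plan is to count $G_a$-orbits on $A^{(n)}$ by partitioning these tuples according to where (if anywhere) the element $a$ appears, and then inject each class of orbits into the $G$-orbits on $A^{(n+1)}$ or $A^{(n)}$.

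Concretely, I would split $A^{(n)}$ into $n+1$ $G_a$-invariant pieces $X_0, X_1, \dots, X_n$, where $X_0$ consists of tuples not containing $a$, and for $1 \leq i \leq n$, $X_i$ consists of tuples with $a$ in the $i$-th coordinate. Let $N_i$ denote the number of $G_a$-orbits on $X_i$, so $\oi_n(G_a) = \sum_{i=0}^n N_i$.

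The key observation is that prepending (or inserting) $a$ gives an injection from $G_a$-orbits on $(A \setminus \{a\})^{(k)}$ into $G$-orbits on $A^{(k+1)}$: if $g \in G$ sends $(a, \bbar)$ to $(a, \bbar')$, then $g$ fixes $a$, so $g \in G_a$ already witnesses that $\bbar$ and $\bbar'$ are $G_a$-equivalent. Applied with $k = n$, this gives $N_0 \leq \oi_{n+1}(G)$. For each $i \geq 1$, deleting the $i$-th coordinate identifies $X_i$ as a $G_a$-set with $(A \setminus \{a\})^{(n-1)}$, and applying the same observation with $k = n-1$ yields $N_i \leq \oi_n(G)$.

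Summing, $\oi_n(G_a) \leq \oi_{n+1}(G) + n \cdot \oi_n(G)$. To reach the stated bound, I would invoke the monotonicity $\oi_n(G) \leq \oi_{n+1}(G)$ for infinite $A$ (noted immediately after the definition of $\oi_n$ in the preliminaries) to obtain $\oi_n(G_a) \leq (n+1)\oi_{n+1}(G)$. There is no real obstacle here; the only thing to be careful about is the case analysis on the position of $a$ and verifying that the insertion map is well-defined on orbits and injective, which both follow from the fact that $a$ is a fixed point of $G_a$.
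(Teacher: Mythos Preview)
Your proof is correct and follows essentially the same approach as the paper: partition $A^{(n)}$ according to the position of $a$ (or its absence), bound the number of $G_a$-orbits on each piece by $\oi_{n+1}(G)$ or $\oi_n(G)$ respectively, and then apply the monotonicity $\oi_n(G)\leq \oi_{n+1}(G)$. The only cosmetic difference is that for tuples containing $a$ in position $i$, the paper observes directly that distinct $G_a$-orbits on $X_i$ remain distinct $G$-orbits on $A^{(n)}$, rather than deleting and re-inserting $a$ as you do; both arguments yield the same bound $N_i\leq \oi_n(G)$.
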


\begin{proof}
	If two $n$-tuples containing the element $a$ at the same position are in different $G_a$-orbits then they are also in different $G$-orbits. Therefore, the number of injective $n$-orbits of $G_a$ corresponding to tuples containing $a$ is at most $n\oi_n(G)$. If two $n$-tuples $\bar{b}$ and $\bar{c}$ are disjoint from $a$ and they are in different $G_a$-orbits then the tuples $a\bar{b}$ and $a\bar{c}$ are in different $G$-orbits. Thus, the number of injective $n$-orbits of $G_a$ disjoint from $a$ is at most $\oi_{n+1}(G)$. From these observations we get the following upper bound for the number of injective $n$-orbits of $G_a$:
\[
\oi_n(G_a)\leq n\oi_n(G)+\oi_{n+1}(G)
\]

	Finally, note that since $A$ is infinite we have $\oi_n(G)\leq \oi_{n+1}(G)$, and thus the statement of the lemma follows.
\end{proof}

\begin{corollary}\label{add_constant}
	Let $\fa$ be a countable $\omega$-categorical structure, and let $F\subset A$ be finite. Then for all $n> |A|$ we have $\oi_n(G)\geq \frac{1}{n(n-1)\dots (n-|F|+1)}\oi_{n-|F|}(G_{F})$.
\end{corollary}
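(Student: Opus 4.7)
The plan is to derive Corollary~\ref{add_constant} by iterating Lemma~\ref{add_constants_gen} once per element of $F$. First I would rewrite that lemma in a lower-bound form: the inequality $\oi_m(G_a)\leq (m+1)\oi_{m+1}(G)$ rearranges to
\[
\oi_{m+1}(G)\geq \frac{1}{m+1}\,\oi_m(G_a),
\]
valid whenever $G$ is an oligomorphic action on a countably infinite set and $a$ lies in that set.

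Next, enumerate $F=\{f_1,\dots,f_k\}$ with $k=|F|$, write $F_i=\{f_1,\dots,f_i\}$, and let $G=\aut(\fa)$. I would prove by induction on $i\in\{0,1,\dots,k\}$ that
\[
\oi_n(G)\geq \frac{1}{n(n-1)\cdots(n-i+1)}\,\oi_{n-i}(G_{F_i}),
\]
for every $n>i$. The base case $i=0$ is trivial. For the inductive step, observe that $G_{F_i}$ is still an oligomorphic permutation group on the countably infinite set $A$ (stabilizing finitely many points preserves $\omega$-categoricity), so Lemma~\ref{add_constants_gen} applies to $G_{F_i}$ with $a=f_{i+1}$. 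Using it at level $m=n-i-1$ yields
\[
\oi_{n-i-1}(G_{F_{i+1}})\leq (n-i)\,\oi_{n-i}(G_{F_i}),
\]
which, combined with the inductive hypothesis, produces the $i+1$ case with one additional factor of $n-i$ in the denominator. Taking $i=k$ then gives the statement of the corollary (reading the hypothesis as $n>|F|$, the natural condition for the iteration to make sense).

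The only obstacle here is bookkeeping: tracking indices along the chain of stabilizations and confirming at each step that $G_{F_i}$ is an oligomorphic group on a countably infinite set so that Lemma~\ref{add_constants_gen} continues to apply. There is no conceptual content beyond Lemma~\ref{add_constants_gen} itself.
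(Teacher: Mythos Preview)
Your proposal is correct and is precisely the ``easy induction using Lemma~\ref{add_constants_gen}'' that the paper invokes; you have simply written out the details of that induction, including the correct bookkeeping and the observation that each $G_{F_i}$ remains oligomorphic on the countably infinite domain. Your parenthetical reading of the hypothesis as $n>|F|$ (rather than the stated $n>|A|$) is also the intended one.
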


\begin{proof}
	Follows by an easy induction using Lemma~\ref{add_constants_gen}.
\end{proof}

\subsubsection{Unstable structures}

\begin{fact}[{\cite[Theorem 1.1]{Macpherson_1987}}] \label{IP rapid}
Let $\fa$ be an $\omega$-categorical structure that is not NIP. Then there is some polynomial $p$ of degree at least two such that $\oi_n(\fa) > 2^{p(n)}$.
\end{fact}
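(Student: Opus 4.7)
The plan is to directly unfold the independence property into a family of pairwise non-equivalent injective tuples. Since $\fa$ is not NIP, I fix a formula $\phi(\bar x; \bar y)$ witnessing IP and set $K = |\bar x| + |\bar y|$. By compactness and $\omega$-categoricity, for every $m$ I can realize a finite IP-configuration of size $m$ inside $\fa$: tuples $\bar a_1, \dots, \bar a_m$ and $(\bar b_J)_{J \subseteq [m]}$ with $\fa \models \phi(\bar a_i; \bar b_J) \iff i \in J$. Using Ramsey to pass to an indiscernible array of witnesses, or simply using that partial types over finite sets in an infinite $\omega$-categorical structure have infinitely many realizations, I arrange that all the individual elements appearing across these tuples are pairwise distinct.

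Next, for each sequence $(J_1, \dots, J_m) \in \PP([m])^m$ I form the concatenated tuple
$$\bar c(J_1, \dots, J_m) = (\bar a_1, \dots, \bar a_m, \bar b_{J_1}, \dots, \bar b_{J_m}) \in A^{(mK)}.$$
The $\aut(\fa)$-orbit of this tuple records, for every pair $(i,j) \in [m]^2$, the truth value of $\phi$ between the $i$-th $\bar x$-block and the $j$-th $\bar y$-block, which is precisely the Boolean ``$i \in J_j$''. Hence distinct sequences $(J_1, \dots, J_m)$ produce tuples in distinct $\aut(\fa)$-orbits, giving $\oi_{mK}(\fa) \geq (2^m)^m = 2^{m^2}$.

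To pass from these sample lengths to a bound for every $n$, I invoke the monotonicity $\oi_n(\fa) \leq \oi_{n+1}(\fa)$ for infinite structures that was noted just after the definition of $\oi_n$. Taking $m = \lfloor n/K \rfloor$ gives $\oi_n(\fa) \geq 2^{\lfloor n/K \rfloor^2}$, so for $n$ sufficiently large $\oi_n(\fa) > 2^{p(n)}$ with $p(n) = \lfloor n^2/(2K^2)\rfloor$, a polynomial of degree two, as required.

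The main obstacle I anticipate is the book-keeping around distinctness of the coordinates of $\bar c(J_1,\dots,J_m)$: the bare IP configuration produces tuples in $\fa^{|\bar x|}$ and $\fa^{|\bar y|}$ without any a priori guarantee that their underlying sets are pairwise disjoint, or that distinct $\bar b_J$'s use disjoint elements. This is not a deep mathematical issue, but it is the one step requiring genuine care; the standard remedy is to extract, via Ramsey and compactness, a mutually indiscernible array of IP-witnesses in which all coordinates can be chosen distinct before assembling the tuples $\bar c(J_1,\dots,J_m)$.
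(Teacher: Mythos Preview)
The paper does not prove this statement; it is quoted as a Fact from Macpherson's 1987 paper and used as a black box. So there is no in-paper argument to compare against, and your proposal should be judged on its own merits.

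Your overall strategy is the natural one and does yield a degree-two polynomial in the exponent. The substantive gap is exactly the step you flag: arranging that the concatenated tuple $\bar c(J_1,\dots,J_m)$ lies in $A^{(mK)}$. Two specific issues:
\begin{itemize}
\item If $J_i = J_j$ for $i\neq j$, then with a single IP-configuration $(\bar b_J)_{J\subseteq[m]}$ you have $\bar b_{J_i}=\bar b_{J_j}$ and the tuple is not injective. So at minimum you must restrict to sequences of \emph{distinct} subsets (giving $(2^m)_m$ rather than $2^{m^2}$ tuples, which is still $2^{m^2 - O(m)}$ and suffices).
\item Your second proposed remedy, ``partial types over finite sets in an infinite $\omega$-categorical structure have infinitely many realizations'', is simply false (algebraic types exist), and even when a type has infinitely many realizations this does not by itself give pairwise \emph{disjoint} realizations. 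The Ramsey/indiscernibility fix works for the sequence $(\bar a_i)$ (shared coordinates become constant and can be absorbed into parameters, then handled via Corollary~\ref{add_constant}), but it is not clear what ``indiscernible array'' means for the $2^m$-indexed family $(\bar b_J)$, and you have not said.
\end{itemize}
A painless way to bypass the disjointness bookkeeping entirely: count orbits on $A^n$ rather than $A^{(n)}$. The tuples $\bar c(J_1,\dots,J_m)$ (allowing repetitions) lie in $A^{mK}$ and still land in pairwise distinct orbits, so $\oo_{mK}(\fa)\ge 2^{m^2}$. Then Observation~\ref{stirling} together with the monotonicity of $\oi_k$ gives $\oo_n \le B_n\cdot \oi_n$, hence $\oi_n \ge 2^{m^2}/B_n$; since $\log B_n = O(n\log n)$ this still yields $\oi_n \ge 2^{q(n)}$ for a quadratic $q$.
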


\begin{lemma}\label{unstable_poly}
    Let $M$ be $\omega$-categorical, NIP, and unstable. Then there is a formula $\phi(x,y)$ (on singletons, with parameters) and a sequence of singletons $I = (a_i : i \in \omega)$ such that $M \models \phi(a_i, a_j) \iff i \leq j.$ Thus $M$ has labeled growth rate $\Omega(n!/p(n))$ for some polynomial $p(n)$.
\end{lemma}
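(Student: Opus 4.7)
The plan is to establish the lemma in two moves: first, to extract a singleton half-graph (with parameters) from the hypothesized tuple-level unstable witness; second, to convert that half-graph into the labelled growth-rate lower bound by invoking Corollary~\ref{add_constant}.

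For the first move, since $M$ is unstable I fix a formula $\psi(\bar x, \bar y)$ and tuples $(\bar a_i, \bar b_i)_{i \in \omega}$ witnessing an infinite half-graph, $\psi(\bar a_i, \bar b_j) \iff i \leq j$. Applying Ramsey's theorem, I extract a $\emptyset$-indiscernible sequence $(\bar c_i)_{i \in \omega}$ (with $\bar c_i$ concatenating the relevant parts of $\bar a_i$ and $\bar b_i$) that preserves the half-graph pattern. By indiscernibility, each coordinate of $\bar c_i$ is either constant across $i$ (absorbed into a finite parameter tuple $\bar p_0$) or all distinct; I single out a non-constant coordinate $a_i := c_i^{(k)}$, obtaining an injective sequence of singletons. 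Extending $(\bar c_i)$ to $(\bar c_i)_{i\in\mathbb Z}$ by compactness and passing to a subsequence indiscernible over $\bar p := \bar p_0 \bar c_{-1} \cdots \bar c_{-m}$ for large enough $m$, I arrange via $\omega$-categoricity that $\bar c_i \in \operatorname{dcl}(a_i \cup \bar p)$ and that the type $\tp(\bar c_1/\bar p)$ is isolated by some formula $\theta(\bar z;\bar p)$. Then the singleton formula
\[
\phi(x,y;\bar p) := \exists \bar z_1 \exists \bar z_2 \bigl(\theta(\bar z_1;\bar p) \wedge \theta(\bar z_2;\bar p) \wedge z_1^{(k)} = x \wedge z_2^{(k)} = y \wedge \psi(\bar z_1, \bar z_2)\bigr)
\]
satisfies $\phi(a_i, a_j;\bar p) \iff i \leq j$, because the determination forces $\bar z_1 = \bar c_i$ and $\bar z_2 = \bar c_j$, so $\psi(\bar z_1, \bar z_2)$ matches the original half-graph.

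For the second move, $\phi(x,y;\bar p) \wedge \neg\phi(y,x;\bar p)$ defines a strict linear order on $\{a_1,\ldots,a_n\}$ over $\bar p$, so the $n!$ permutations of $(a_1,\ldots,a_n)$ lie in distinct $G_{\bar p}$-orbits on $M^{(n)}$, where $G := \aut(M)$; hence $\oi_n(G_{\bar p}) \geq n!$. Applying Corollary~\ref{add_constant} with $F := \bar p$, for sufficiently large $n$,
\[
\oi_n(M) \geq \frac{\oi_{n-|\bar p|}(G_{\bar p})}{n(n-1)\cdots(n-|\bar p|+1)} \geq \frac{(n-|\bar p|)!}{n^{|\bar p|}} \geq \frac{n!}{n^{2|\bar p|}},
\]
yielding $\oi_n(M) = \Omega(n!/p(n))$ with $p(n) := n^{2|\bar p|}$.

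The main obstacle is the determination step in the first move: securing $\bar c_i \in \operatorname{dcl}(a_i \cup \bar p)$ so that no spurious witnesses enter the existential $\phi$. In an $\omega$-categorical theory, finite enlargements of $\bar p$ shrink the $G_{\bar p \cup \{a_i\}}$-orbit of $\bar c_i$ to finitely many options; NIP on the indiscernible sequence then controls the $\psi$-patterns among those options so that any residual ambiguity cannot create a half-graph inconsistent with $i \leq j$. If the clean $\operatorname{dcl}$ reduction is not directly available for arbitrary $\omega$-categorical NIP unstable structures, one can fall back on a $\forall$-variant of $\phi$, or on a direct NIP pattern-preservation argument to rule out contrary witnesses.
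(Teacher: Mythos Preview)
Your second move---using the singleton order to produce $n!$ distinct $G_{\bar p}$-orbits on injective $n$-tuples and then invoking Corollary~\ref{add_constant} to absorb the parameters into a polynomial factor---is correct and is exactly how the paper derives the growth bound.

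The first move is where you diverge from the paper, and there is a real gap. The paper does not construct the singleton formula by hand; it simply invokes \cite[Theorem~5.8]{simon2022linear}, which says that in an NIP theory the home sort having op-dimension $\geq 1$ (an immediate consequence of instability) yields a $\bigvee$-definable linear quasi-order on singletons with infinite chains, and $\omega$-categoricity collapses this to a genuine formula with parameters. The passage from a tuple-level order property to one on singletons is precisely the non-trivial content of Simon's result, and the paper explicitly remarks that even the weaker bound $\Omega(n!/(2+\varepsilon)^n)$ already requires \cite{simon2021note} for this reduction.

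Your claimed determination step---that adding finitely many members $\bar c_{-1},\dots,\bar c_{-m}$ of the indiscernible sequence forces $\bar c_i \in \operatorname{dcl}(a_i \cup \bar p)$---is not a consequence of $\omega$-categoricity and fails in easy examples. Take $M$ to be the disjoint union of two copies $P_1,P_2$ of $(\mathbb{Q},<)$ with no cross-structure, let $\bar c_i = (a_i,b_i)$ with $a_i\in P_1$ and $b_i\in P_2$ both increasing, and let $\psi(\bar x,\bar y):= x_2 < y_2$. Choosing the coordinate $a_i$, the element $b_i$ is never in $\operatorname{acl}(a_i,\bar p)$ for any finite $\bar p$ drawn from the sequence, and your existential $\phi(a_i,a_j;\bar p)$ holds for \emph{all} $i,j$ (the witnesses $b,b'$ slide freely in $P_2$) while the $\forall$-variant fails for all $i,j$; so neither fallback works for this coordinate. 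Of course in this particular structure the other coordinate succeeds, but your argument gives no mechanism for selecting the right coordinate or the right quantifier, and your ``NIP pattern-preservation'' remark is exactly the missing ingredient---supplying it would amount to reproving Simon's theorem.
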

\begin{proof}
    The first part follows from \cite[Theorem 5.8]{simon2022linear}, noting that if $M$ is unstable then the partial 1-type $\{x=x\}$ has op-dimension $\geq 1$, and if $M$ is $\omega$-categorical then $\bigvee$-definable relations are definable.

    The growth rate then easily follows by Corollary \ref{add_constant}.
\end{proof}

We remark that one can get the weaker bound of $\Omega(n!/(2+\varepsilon)^n)$ from a more elementary argument, using \cite{simon2021note} to obtain the order-property on singletons. The lower bound then follows from Lemma~\ref{lemma:half graphs}.

\subsubsection{Stable structures that are not monadically stable}

\begin{lemma}\label{stable_not_mod}
	Let $\fa$ be a countable stable $\omega$-categorical structure which is not monadically stable. Then for all $\varepsilon>0$, we have $\oi_n(\fa)>n^{(2-\varepsilon)n}$ for sufficiently large $n$.
\end{lemma}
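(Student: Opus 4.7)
The plan is to combine coding with the parameter-adding Corollary~\ref{add_constant}, and then calibrate the parameter sizes carefully. By Theorem~\ref{coding}, since $\fa$ is stable and not monadically stable, $\fa$ admits coding: there is a formula $\phi(x,y,z)$ (with parameters) and infinite $A, B, C \subseteq \fa$ such that for each $(a,b) \in A \times B$ there is a unique $c(a,b) \in C$ with $\phi(a,b,c') \Leftrightarrow c' = c(a,b)$ for $c' \in C$. Using $\omega$-categoricity we may shrink $A$, $B$, $C$ so that the coding map $(a,b) \mapsto c(a,b)$ is injective and $C$ lies inside a single $\aut(\fa)$-orbit (hence is $\emptyset$-definable).

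Fix $\varepsilon > 0$ and for each $n$ set $m = \lceil n/\log n \rceil$. Pick $A_m \subseteq A$, $B_m \subseteq B$ each of size $m$, and let $F = A_m \cup B_m$ and $C_m = \{c(a,b) : (a,b) \in A_m \times B_m\}$, so $|F| = 2m$ and $|C_m| = m^2$. In the expansion $\fa_F$, each element $c(a_i, b_j) \in C_m$ is $\emptyset$-definable as the unique element of $C$ satisfying $\phi(a_i, b_j, z)$, and is therefore fixed by $\aut(\fa_F)$; that is, it forms a singleton orbit. Distinct injective tuples drawn from $C_m$ consequently lie in distinct $\aut(\fa_F)$-orbits, giving
\[
\oi_{n - 2m}(\fa_F) \;\geq\; m^2(m^2-1)\cdots(m^2 - (n-2m) + 1).
\]
Corollary~\ref{add_constant} then yields
\[
\oi_n(\fa) \;\geq\; \frac{m^2(m^2-1)\cdots(m^2 - (n-2m) + 1)}{n(n-1)\cdots(n-2m+1)},
\]
and a routine Stirling computation with $m = n/\log n$ simplifies the right-hand side to $\log \oi_n(\fa) \geq 2n\log n - O(n \log\log n)$, which exceeds $(2-\varepsilon) n \log n$ for $n$ sufficiently large.

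The main obstacle is the calibration of $m$. The naive choice $m \approx \sqrt n$ (so that $|C_m| \approx n$ matches the tuple length) only yields $\oi_n(\fa) \geq n^{(1-o(1))n} \sim n!$, falling short by a factor of two in the exponent. Taking $m$ much smaller, of order $n/\log n$, makes the $|F| = 2m$ parameters almost free in Corollary~\ref{add_constant}, while keeping $|C_m| = m^2$ much larger than $n$; the falling factorial in the numerator then contributes roughly $m^{2n} \approx n^{2n}/(\log n)^{2n}$, which dwarfs the denominator $n^{2m} = n^{2n/\log n}$ and pushes the exponent up to $(2-\varepsilon)$. A subsidiary technicality, not wholly transparent from Definition~\ref{def:coding}, is arranging injectivity of the coding map and $\emptyset$-definability of $C$; both should follow from $\omega$-categoricity together with standard pigeonhole on the orbit-sizes of the fibers of $c$.
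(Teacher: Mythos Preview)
Your approach is essentially the paper's: exploit coding to obtain witnesses $a_i, b_j, c_{i,j}$ with $\phi(a_i,b_j,c_{k,\ell}) \Leftrightarrow (i,j)=(k,\ell)$, count injective tuples among the $c_{i,j}$, and invoke Corollary~\ref{add_constant}. The one organizational difference is that the paper places $\bar a=(a_1,\dots,a_m)$ and $\bar b=(b_1,\dots,b_m)$ as a \emph{prefix of the $n$-tuple} rather than into the parameter set $F$; consequently the paper's $F$ consists only of the fixed parameters of $\phi$, and it may take $m$ linear in $n$ (roughly $\varepsilon n/10$) instead of your $n/\log n$. Both calibrations reach the $(2-\varepsilon)$ exponent by the same falling-factorial count.

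Two small repairs are needed in your preprocessing. First, the parameters of $\phi$ must also be put into $F$; this is a fixed finite addition and is asymptotically harmless. Second, the claim that each $c(a_i,b_j)$ is \emph{fixed} by $\aut(\fa_F)$ would require $C$ itself to be $F$-definable, and arranging $C$ merely to lie inside a single $\aut(\fa)$-orbit does not give this (a proper subset of an orbit need not be definable). Fortunately you do not need the strong claim: it suffices that distinct injective tuples from $C_m$ lie in distinct $\aut(\fa_F)$-orbits. This follows directly, since any $\sigma\in\aut(\fa_F)$ sending one such tuple to another fixes every $a_i,b_j$ and the parameters of $\phi$, hence sends each $c_{i,j}$ to an element of $C_m\subseteq C$ satisfying $\phi(a_i,b_j,\cdot)$; the coding condition (uniqueness within $C$) then forces this element to be $c_{i,j}$. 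The paper uses exactly this argument with $\bar a,\bar b$ in the tuple rather than in $F$, and, like you, simply asserts the injective form of coding without separate justification.
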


\begin{proof}
	By Theorem~\ref{coding}, $\thy(\fa)$ admits coding. Thus there is a formula $\phi(x,y,z)$ with parameters and elements $\set{a_i,b_j,c_{i,j}: i,j\in \omega}$ such that $\phi(a_i,b_j,c_{k,l})$ holds if and only if $(i,j)=(k,\ell)$. Let $F$ be the set of parameters in $\phi(x,y,z)$. 

	Now let $n>40/\varepsilon$ be arbitrary and choose some $\frac{\varepsilon}{20}n+1< m<\frac{\varepsilon}{10}n$. Let $\bar{a}=(a_1,\dots,a_m)$, $\bar{b}=(b_1,\dots,b_m)$, and $\bar{c}=(c_{1,1},\dots,c_{m,m})$. Let us now consider the $n$-tuples of the form $\bar{d}^+=\bar{a}\bar{b}\bar{d}$ where $\bar{d}$ is an injective $(n-2m)$-subtuple of $\bar{c}$. By the properties of elements $a_i,b_i,c_{i,j}$ we know that for different choices of $\bar{d}$ the $\bar{d}^+$ will be in different $n$-orbits of $\aut(\fa)_F$. From this we get
	
\begin{align*}
\oi_n(\aut(\fa)_F)&\geq m^2(m^2-1)\dots (m^2-n-2m+1)\geq ((m-1)^2-n)^{n-2m}\\
&>\Bigl(\frac{\varepsilon^2 n^2}{400}-n\Bigr)^{n-2m}>\Bigl(\frac{\varepsilon^2 n^2}{401}\Bigr)^{(1-\varepsilon/5)n}>n^{(2-\varepsilon/2)n}
\end{align*}

	if $n$ is large enough.
	
	Using Corollary~\ref{add_constant}, it follows that if $n$ is large enough, then
	
\[
\oi_n(\fa)\geq \frac{1}{n^{|F|}}(n-|F|)^{(2-\varepsilon/2)(n-|F|)}\geq n^{(2-\varepsilon)n}.\qedhere
\]
\end{proof}

	The following example shows that we cannot get rid of the $\varepsilon$ in the lower bound above.

\begin{example}
	Let $\fa=(\omega^3;E_1,E_2)$ where $(a_1,a_2,a_3)E_i(b_1,b_2,b_3)$ holds if and only if $a_i=b_i$. Then $\fa$ clearly admits coding as witnessed by the formula $\phi(x,y,z)\Leftrightarrow xE_1z\wedge yE_2z$ and $A=B=\{(i,i,i): i\in \omega\}$ and $C=\{(i,j,0): i,j\in \omega\}$. Note that $\fa$ is also the Fra\"{i}ss\'{e} limit of all finite structures with two equivalence relations $E_1$ and $E_2$. Therefore, $\oi_n(\fa)=B_n^2<n^{2n}$. 
	
	We can get an even slower labelled growth if we get rid of the third coordinate in $\fa$. In this case the labelled growth is the Sloane sequence~\href{https://oeis.org/A059849}{A059849}.
\end{example}

	We are now ready to prove our first main result.

    \begin{theorem}\label{thm:main}
    Let $\fa$ be an $\omega$-categorical structure. Then one of the following holds.
    \begin{enumerate}
        \item $\fa$ is cellular and there exist $c, d \in \R, d<1$ such that $\oi_n(\fa) \leq cn^{dn}$.
        \item $\fa$ is monadically stable but not cellular. In this case $\oi_n(\fa) \geq B_n$ where $B_n$ is the $n^{th}$ Bell number, and for sufficiently large $n$, $\oi_n(\fa) \leq n!/c^n$ for all $c > 0$.
        \item $\fa$ is not monadically stable and there is some polynomial $p(n)$ such that $\oi_n(\fa) \geq n!/p(n)$.
    \end{enumerate}
\end{theorem}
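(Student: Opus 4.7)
The plan is to assemble the pieces already developed in Sections 3.1 and 3.2 into the trichotomy; no new ingredients are needed.

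For Case (1), the statement is simply a direct restatement of Theorem \ref{cd}.

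For Case (2), I would derive the upper bound from Theorem \ref{ms_slow}: since $\eg(\oi_n(\fa))(x)$ converges at every $c$, we have $\oi_n(\fa) c^n / n! \to 0$, so $\oi_n(\fa) \leq n!/c^n$ eventually. The lower bound $\oi_n(\fa) \geq B_n$ is exactly the content of Corollary \ref{not_cell2}, which combines the existence (given by Lemma \ref{not_cell}) of a $\emptyset$-definable equivalence relation with infinitely many infinite classes with the calculation of the labelled growth rate in the corresponding example just before Remark \ref{cd_bell}.

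For Case (3), I would split into three subcases based on where $\fa$ lies in the stability/NIP hierarchy. If $\fa$ fails NIP, Fact \ref{IP rapid} gives $\oi_n(\fa) > 2^{p(n)}$ for some polynomial $p$ of degree at least two; since $\log(n!/q(n)) = O(n \log n)$ for any polynomial $q$, the bound $2^{p(n)}$ dominates $n!/q(n)$ eventually, and in particular exceeds it for a suitable $q$. If $\fa$ is NIP and unstable, Lemma \ref{unstable_poly} delivers the conclusion directly. If $\fa$ is stable but not monadically stable, Lemma \ref{stable_not_mod} yields $\oi_n(\fa) > n^{(2-\varepsilon)n}$ for each $\varepsilon > 0$ and sufficiently large $n$; taking any $\varepsilon < 1$ and applying Stirling's formula in the form $n! \leq n^n$ shows $n^{(2-\varepsilon)n}$ eventually exceeds $n!/p(n)$ for any polynomial $p$.

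Since no step involves more than combining results already at hand with elementary asymptotics, there is no substantive obstacle; the only thing to be careful about is bookkeeping the three subcases of Case (3) and checking that each bound really dominates $n!/p(n)$ for some polynomial $p$, which it does (with considerable room to spare in the IP and stable-non-monadically-stable branches).
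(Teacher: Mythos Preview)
Your proposal is correct and follows essentially the same approach as the paper's proof: Case~(1) via Theorem~\ref{cd}, Case~(2) via Theorem~\ref{ms_slow} and Corollary~\ref{not_cell2}, and Case~(3) by splitting into the non-NIP, NIP-unstable, and stable-non-monadically-stable subcases handled respectively by Fact~\ref{IP rapid}, Lemma~\ref{unstable_poly}, and Lemma~\ref{stable_not_mod}. The paper's own proof is more terse (it just notes $n^n \geq n!$ for the last subcase rather than invoking Stirling), but the structure and cited ingredients are identical.
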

\begin{proof}
	The case when $\fa$ is cellular is handled by Theorem~\ref{cd}. When $\fa$ is monadically stable but not cellular the lower bound is given by Corollary~\ref{not_cell2} and the upper bound is given by Theorem~\ref{ms_slow}. The lower bound in the unstable case is given by Fact \ref{IP rapid} and Lemma~\ref{unstable_poly}. Finally, if $\fa$ is stable but not monadically stable the lower bound follows from Lemma~\ref{stable_not_mod} since $n^n\geq n!$.
\end{proof}

\section{Shrub-depth and the labelled growth rate of hereditary graph classes}

The orbit-counting in the previous section is equivalent to counting structures in hereditary classes that arise from taking the age of an $\omega$-categorical relational structure with quantifier elimination, as discussed in Section \ref{sec:prelim}. In this section, we prove Theorem \ref{thm:intrographgap}, showing all graph classes with sufficiently slow growth must arise from $\omega$-categorical monadically stable structures. (We expect that the same should be true for relational structures, but the crucial results of \cite{mahlmann2025forbidden} are proved only for graphs.)

We refer to \cite[Section 1]{ossona2025transducing} for the fact that the following definition is equivalent to more standard ones.

\begin{definition}
    Let $\CC$ be a hereditary class of graphs. Then $\CC$ has \emph{bounded shrub-depth} if $\CC$ can be defined on singletons in a class of colored forests of height $k$ for some $k \in \omega$. 
    
    More formally, there is a 1-dimensional FO-interpretation in the sense of \cite[Section 2.4]{mahlmann2025forbidden} (which does not allow for quotients by definable equivalence relations) of $\CC$ in some class of forests of height at most $k \in \omega$ equipped with additional unary predicates.
\end{definition}

\begin{notation}
    Let $\CC$ be a class of structures. Given $n \in \omega$, we let \emph{$\CC_{[n^-]}$} denote the class of all expansions of $\CC$ by $n$ new constant symbols.
\end{notation}

\begin{theorem}[Special case of {\cite[Theorem 1]{Burris1994model}}]
Let $\CC$ be a hereditary class of finite relational structures. The following are equivalent.
\begin{enumerate}
    \item $\CC$ is a finite union of ages of $\omega$-categorical structures.
    \item For every $n \in \omega$, there does not exist $\set{M_i : i \in \omega} \subset \CC_{[n^-]}$ such that for every $j \neq k \in \omega$, there is no $N \in \CC_{[n^-]}$ that both $M_j$ and $M_k$ embed into.
\end{enumerate}
\end{theorem}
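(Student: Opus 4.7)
The plan is to prove both directions of the equivalence. For (1) $\Rightarrow$ (2): Suppose $\CC = \bigcup_{i=1}^k \mathrm{Age}(\fa_i)$ with each $\fa_i$ $\omega$-categorical. Fix $n$ and, to each $M \in \CC_{[n^-]}$, assign the invariant $(i, [\bar a])$, where $i$ is any index such that the constant-free reduct of $M$ embeds in $\fa_i$ and $[\bar a]$ is the $\aut(\fa_i)$-orbit of the image of the interpreted constants. By $\omega$-categoricity, there are only finitely many invariants available across all $i$. If $M, M'$ share an invariant, applying an automorphism of $\fa_i$ we may assume both embed into $(\fa_i, \bar a)$ for a common choice of $\bar a$, and then any finite substructure of $(\fa_i, \bar a)$ containing both their images lies in $\CC_{[n^-]}$ and is a common extension. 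Hence an $\omega$-indexed family with pairwise no common extension can contain at most one representative per invariant, and so must be finite.

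For the more substantial direction (2) $\Rightarrow$ (1): let $\approx_n$ denote the direct joint embeddability relation on $\CC_{[n^-]}$ and $\sim_n$ its transitive closure. Condition (2) at level $n$ implies that $\sim_n$ has only finitely many equivalence classes, since choosing one representative per class would otherwise yield an $\omega$-family pairwise not $\sim_n$-related and a fortiori pairwise not $\approx_n$-related. Applying this at $n=0$ partitions $\CC = \CC_1 \sqcup \cdots \sqcup \CC_k$. For each class $\CC_j$ I would construct a countable structure $\fa_j$ with $\CC_j \subseteq \mathrm{Age}(\fa_j) \subseteq \CC$, as the union of an $\omega$-chain inside $\CC$ that exhausts $\CC_j$; then $\CC = \bigcup_j \mathrm{Age}(\fa_j)$ follows at once. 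Finally, $\omega$-categoricity of $\fa_j$ would come from Ryll--Nardzewski: the $n$-types over $\emptyset$ in $\fa_j$ inject into the set of $\approx_n$-classes realized inside $\fa_j$, which (2) at level $n$ bounds.

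The principal obstacle is the construction of $\fa_j$ itself. Each $\CC_j$ is a $\sim_0$-class and so enjoys only \emph{transitive-closure} JEP; a Fra\"iss\'e-style successive adjunction requires direct joint embeddability of the next element with the current finite approximation, which is not automatic. The plan is to route each insertion through the finite $\sim_0$-chain witnessing transitive-closure JEP, absorbing intermediate common extensions one at a time. An alternative route is compactness: form the theory consisting of the universal theory of $\CC$ together with extension axioms asserting the embedding of each $M \in \CC_j$ into the intended model, and use (2) at every $n$ to show the theory is consistent and has few $n$-types; L\"owenheim--Skolem then produces a countable model whose age covers $\CC_j$, with $\omega$-categoricity again via Ryll--Nardzewski. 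The most delicate point either way is verifying that the construction stays inside the single class $\CC_j$ and does not spill across the partition.
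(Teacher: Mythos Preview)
The paper does not give its own proof of this statement; it is simply quoted from Burris. Your argument for $(1)\Rightarrow(2)$ is correct.

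For $(2)\Rightarrow(1)$ there is a genuine gap, and it is precisely the obstacle you flag but do not overcome: the $\sim_0$-classes need not have JEP, and neither of your proposed workarounds succeeds. Take the language with unary predicates $P,Q,R$ and let $\CC$ consist of all finite structures whose set of realized predicates lies in $\{\emptyset,\{P\},\{Q\},\{R\},\{P,Q\},\{Q,R\}\}$. This class is hereditary and satisfies $(2)$ at every $n$ (the invariant ``realized set of $M$ together with the quantifier-free type of the constant tuple'' takes only finitely many values, and structures sharing an invariant are jointly embeddable). There is a single $\sim_0$-class, since $\{P,Q\}\approx_0\{Q\}\approx_0\{Q,R\}$, yet $\CC$ fails JEP: a structure realizing $\{P,Q\}$ and one realizing $\{Q,R\}$ have no common extension in $\CC$. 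Your chain-routing idea fails here because absorbing the intermediate $\{Q\}$-structure into anything already realizing $P$ produces a $\{P,Q\}$-structure, which remains incompatible with anything realizing $R$; and your compactness alternative fails for the same reason, since already the finite fragment asserting containment of one $\{P,Q\}$-structure and one $\{Q,R\}$-structure is inconsistent with the universal theory of $\CC$. The correct covering of this $\CC$ (by the age of an infinite $\{P,Q\}$-structure and the age of an infinite $\{Q,R\}$-structure) is not the partition into $\sim_0$-classes at all, so a genuinely different decomposition is required; this is where the real content of Burris's argument lies.
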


\begin{corollary}[cf.~{\cite[Theorem 4.16]{pouzet2024well}}] \label{cor:fin union}
    Let $\CC$ be a hereditary class of finite relational structures such that $\CC_{[n^-]}$ is well-quasi-ordered under embeddability for every $n \in \omega$. Then $\CC$ is a finite union of ages of $\omega$-categorical structures.
\end{corollary}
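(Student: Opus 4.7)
The plan is to prove the corollary by contrapositive, deriving an infinite antichain in some $\CC_{[n^-]}$ from the failure of the conclusion and then invoking the cited theorem.

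First I would assume that $\CC$ is \emph{not} a finite union of ages of $\omega$-categorical structures. By the theorem of Burris just quoted, item (2) must fail: there is some $n \in \omega$ and an infinite family $\{M_i : i \in \omega\} \subset \CC_{[n^-]}$ such that for every $j \neq k$, no single $N \in \CC_{[n^-]}$ admits embeddings of both $M_j$ and $M_k$.

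Next I would observe that this mutual non-amalgamability forces the family to be an infinite antichain under embeddability. Indeed, if we had $M_j \hookrightarrow M_k$ for some $j \neq k$, then taking $N := M_k$ would give a common extension of $M_j$ and $M_k$ inside $\CC_{[n^-]}$, contradicting the defining property of the family. Hence $M_j \not\hookrightarrow M_k$ for all $j \neq k$, so $\{M_i : i \in \omega\}$ is an infinite antichain in $\CC_{[n^-]}$.

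Finally, this contradicts the hypothesis that $\CC_{[n^-]}$ is well-quasi-ordered under embeddability, since any wqo in particular forbids infinite antichains. This contradiction completes the proof. I do not expect any real obstacle here: once the theorem of Burris is in hand, the entire argument is a one-line reduction from the non-amalgamation condition to the existence of an infinite antichain.
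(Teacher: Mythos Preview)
Your argument is correct and is exactly the one-line deduction from Burris's theorem that the paper has in mind (the paper gives no explicit proof of this corollary, indicating it follows immediately). The observation that a pairwise non-amalgamable family is automatically an antichain under embeddability is precisely the point.
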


\begin{proposition} \label{prop:sd catms}
    Let $\CC$ be a hereditary class of graphs. Then $\CC$ has bounded shrub-depth if and only if $\CC$ is a finite union of ages of $\omega$-categorical monadically stable graphs.
\end{proposition}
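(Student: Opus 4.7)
The plan is to prove the two directions of the equivalence separately.

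\textbf{Reverse direction.} Suppose $\CC = \bigcup_{i=1}^{k} \mathrm{Age}(\fa_i)$ with each $\fa_i$ an $\omega$-categorical monadically stable graph. By Fact~\ref{mstab induct}, $\aut(\fa_i)$ is built from finite permutation groups in finitely many steps via finite direct products, wreath products with $S_{\omega}$, and closed supergroups. I would prove by induction on this construction that $\mathrm{Age}(\fa_i)$ has bounded shrub-depth: the base case of finite structures is immediate; direct products correspond to disjoint unions with a unary predicate marking each component and wreath products with $S_{\omega}$ correspond to infinite equivalence relations, both of which add at most one level to a defining labelled forest; closed supergroups correspond to reducts, a special case of FO-transductions, under which bounded shrub-depth is known to be preserved. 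Finally, a finite union of classes of bounded shrub-depth has bounded shrub-depth, taking the maximum of the heights.

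\textbf{Forward direction.} Suppose $\CC$ has bounded shrub-depth. I would first observe that adding finitely many constants amounts to marking finitely many vertices by new unary predicates, and shrub-depth is invariant under this operation; hence $\CC_{[n^-]}$ has bounded shrub-depth for every $n \in \omega$. Next, appealing to the known fact that bounded-shrub-depth graph classes are well-quasi-ordered under induced-subgraph embedding, $\CC_{[n^-]}$ is wqo under embeddability for every $n$. Applying Corollary~\ref{cor:fin union} then yields a decomposition $\CC = \bigcup_{i=1}^{k} \mathrm{Age}(\fa_i)$ for finitely many $\omega$-categorical graphs $\fa_i$.

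It remains to show each $\fa_i$ is monadically stable. Since $\mathrm{Age}(\fa_i) \subseteq \CC$ inherits bounded shrub-depth, and bounded shrub-depth of a graph class is known to imply monadic stability of the class, I would argue contrapositively: if $\fa_i$ were not monadically stable, then by Theorem~\ref{coding} the theory $\thy(\fa_i)$ admits coding, witnessed (via the remark after Definition~\ref{def:coding}) by arbitrarily large finite configurations in $\mathrm{Age}(\fa_i)$. Absorbing the finitely many parameters of the coding formula as unary markers on their realisations then produces arbitrarily large coding patterns in a monadic expansion of $\mathrm{Age}(\fa_i)$, contradicting monadic stability of $\CC$ as a class. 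The main obstacle is precisely this final transfer step: moving from monadic stability of the hereditary graph class $\CC$ down to monadic stability of each $\omega$-categorical $\fa_i$ whose age lies in $\CC$, which requires that parameters from $\fa_i$ together with the monadic expansion witnessing coding be realised coherently inside a single monadic expansion of $\CC$.
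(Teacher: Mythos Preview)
Your overall architecture matches the paper's proof: the reverse direction via induction along Fact~\ref{mstab induct}, and the forward direction via wqo of $\CC_{[n^-]}$ followed by Corollary~\ref{cor:fin union}, are exactly what the paper does. The only substantive divergence is in the final transfer step, and there your argument has two genuine gaps.

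First, your appeal to Theorem~\ref{coding} is incomplete: that theorem characterises monadic stability only among \emph{stable} theories. If $\fa_i$ is not monadically stable, it might simply be unstable, in which case Theorem~\ref{coding} says nothing and you have no coding formula to work with. You would need a separate argument for the unstable case.

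Second, and more seriously, even granting a coding formula $\phi(x,y,z)$ with parameters in $\fa_i$, the transfer to $\mathrm{Age}(\fa_i)$ is not as simple as ``absorbing parameters as unary markers.'' The formula $\phi$ may contain quantifiers, so the coding pattern it defines in $\fa_i$ need not persist when you restrict to a finite induced substructure: the quantifiers range over all of $\fa_i$, not over the finite piece. Thus the arbitrarily large finite configurations in $\mathrm{Age}(\fa_i)$ do not automatically satisfy the coding condition \emph{as structures in their own right}, which is what you would need to contradict monadic stability of the class $\CC$. You correctly flag this as the main obstacle, but you do not resolve it.

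The paper sidesteps both issues at once by invoking \cite[Proposition~2.5]{braunfeld2022existential}: monadic stability is a property of the universal (equivalently, existential) fragment of a theory. Since $\mathrm{Age}(\fa_i) \subseteq \CC$, the universal theory of $\fa_i$ contains that of $\CC$; and since $\CC$ is monadically stable (a known consequence of bounded shrub-depth), so is $\fa_i$. This single citation replaces your entire contrapositive argument and handles the unstable and stable-but-coding cases uniformly.
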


\begin{proof}
    $(\Rightarrow)$ Let $\CC$ have bounded shrub-depth. It follows from \cite[Corollary 3.9]{ganian2019shrub} that $\CC_{[n^-]}$ is well-quasi-ordered under embeddability for every $n \in \omega$. So by Corollary \ref{cor:fin union}, $\CC$ is the finite union of ages of $\omega$-categorical graphs $\set{G_i : i \in [n]}$. Also, $\CC$ is monadically stable (see, e.g., \cite[Figure 1]{nesetril2021classes}). By \cite[Proposition 2.5]{braunfeld2022existential}, monadic stability is a property of the universal fragment of a theory, and so each $G_i$ is monadically stable as well.

    $(\Leftarrow)$ As a finite union of classes of bounded shrub-depth still has bounded shrub-depth, let $\CC$ be the age of an $\omega$-categorical monadically stable structure $M$. If $M$ is finite, then $\CC$ has bounded shrub-depth. Since the inductive steps for building up an $\omega$-categorical monadically stable structure from finite structures described in and after Fact \ref{mstab induct} all preserve bounded shrub-depth, the result follows.
\end{proof}

For the next two definitions, we refer to \cite[Figure 2]{mahlmann2025forbidden} for an illustration.

\begin{definition}
    Let $G$ and $H$ be graphs on the same vertex set, and let $\PP$ be a partition of the vertex set. Then \emph{$G$ is a $\PP$-flip of $H$} if $G$ can be obtained from $H$ by complementing the edge relation on some subset of (not necessarily distinct) pairs of parts of $\PP$. More formally, there is some symmetric $\II \subset \PP^2$ such that, letting $\PP_x$ be the part of $\PP$ containing the vertex $x$, we have $H \models E(x,y)$ if and only if either ($G \models E(x,y)$, and $(\PP_x, \PP_y) \not \in \II$) or ($G \models \neg E(x,y)$, and $(\PP_x, \PP_y) \in \II$).
\end{definition}

\begin{definition}
     We let $P_t$ denote a path on $t$ vertices, and $kP_t$ the disjoint union of $k$ copies of $P_t$. Identify the vertex set of $kP_t$ with $[k] \times [t]$, where $(i,j)$ is the $j^{th}$ vertex on the $i^{th}$ path, and let $\PP$ be the partition of the vertex set where $[k] \times \set{j}$ is a part for each $j \in [t]$. Then a \emph{flipped $kP_t$} is a $\PP$-flip of $kP_t$.

     We let $H_t$ denote the half-graph of order $t$.
\end{definition}
   
\begin{definition}
    A bipartite graph $G = (U, V; E)$ is \emph{semi-induced} in a graph $H = (W; F)$ if there is a function $f \colon U \cup V \to W$ such that $f(E) = F \upharpoonright (f(U) \times f(V))$.
\end{definition}

\begin{lemma}\label{lemma:half graphs}
    Let $\CC$ be a hereditary class of graphs containing arbitrarily large semi-induced half-graphs. Then for any $\varepsilon>0$ we have $\oi_n(\CC)>\frac{n!}{(2+\varepsilon)^n}$ for sufficiently large $n$.
\end{lemma}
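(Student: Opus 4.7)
The plan is to extract, via Ramsey, a pure sub-half-graph inside $\CC$ whose automorphism group is small, and then count its labelled copies. I will actually obtain the much stronger bound $\oi_n(\CC)\geq n!/2$, from which the claim follows immediately for every $\varepsilon>0$.

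First I fix a target $n$ and use the hypothesis to take a semi-induced half-graph of order $s\gg n$ in some $H\in\CC$, with witness $f\colon U\cup V\to V(H)$. A neighbourhood comparison shows that $f$ must be injective on $U$ and on $V$ separately (since the $u_i$ have pairwise distinct neighbourhoods in $V$, and symmetrically for the $v_j$), and any cross-collision $f(u_i)=f(v_j)$ must satisfy $i>j$; consequently the set of cross-collisions forms a matching in $U\times V$. Next I apply Ramsey twice in succession (for within-$U$ edges, then for within-$V$ edges) to obtain $I\subseteq [s]$ of arbitrarily large size on which both within-side edge indicators are constant, so each side of the half-graph restricted to $I$ is either a clique or an independent set. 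Removing a vertex cover of the cross-collision matching restricted to $I$ then yields $I'\subseteq I$ with $|I'|\geq |I|/2$ on which $f$ is fully injective over $\{u_i,v_i:i\in I'\}$. By heredity, the induced subgraph of $H$ on $\{f(u_i),f(v_i):i\in I'\}$ lies in $\CC$; taking interleaved subsets of the sorted $I'$ of total size $n$ then gives a graph $G'\in\CC$ on exactly $n$ vertices with the bipartite structure still yielding distinct neighbourhoods on each side.

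Finally I bound $|\aut(G')|\leq 2$. A side-preserving automorphism, written as permutations $\pi,\rho$ of the two sides, must satisfy $\pi(i)\leq \rho(j)\iff i\leq j$, which forces $\pi=\rho=\mathrm{id}$ by a standard interlacing argument. The only side-swapping candidate is the reversal $u_i\leftrightarrow v_{|I'|+1-i}$, which is an automorphism only when the sides have equal size and their within-structures agree under reversal. The $S_n$-orbit of $G'$ under relabelling therefore consists of at least $n!/2$ distinct labelled graphs, all in $\CC$; hence $\oi_n(\CC)\geq n!/2>n!/(2+\varepsilon)^n$ for sufficiently large $n$. The main obstacle is the second step: carrying out the Ramsey cleanup while simultaneously purging cross-collisions in a way that preserves the bipartite order on the surviving indices; the automorphism computation, though essential, is routine once injectivity is secured.
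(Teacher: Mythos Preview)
Your approach is substantially different from the paper's and, modulo the gap below, yields the stronger bound $\oi_n(\CC)\ge n!/2$. The paper's argument is far more elementary: it passes to the induced subgraph on the $2k$ vertices $a_1,\dots,a_k,b_1,\dots,b_k$ (tacitly treating them as distinct) and observes that for each pair of permutations $(\sigma,\tau)$ of $[k]$, labelling $a_i$ by $\sigma(i)$ and $b_j$ by $k+\tau(j)$ gives a different labelled graph, since the $a_i$ have pairwise distinct neighbourhoods among the $b_j$. This already gives $(k!)^2\ge (2k)!/4^k=n!/2^n$, with no Ramsey step and no automorphism computation; odd $n$ is then handled by monotonicity. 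Your explicit handling of cross-collisions is more careful than the paper, which glosses over injectivity of the semi-induction map. The Ramsey cleanup and the automorphism bound are the price you pay for the extra factor of $2^{n-1}$.

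There is, however, a real gap in your argument for $|\aut(G')|\le 2$. You show that side-preserving automorphisms are trivial and that any automorphism \emph{interchanging the two sides setwise} must be the reversal, but you have not excluded automorphisms that \emph{mix} the sides, sending some $a_i$ into $B$ while leaving others in $A$. Such automorphisms exist: when one side is a clique and the other independent (one of your four Ramsey outcomes, sides of equal size $m$), the single transposition $a_m\leftrightarrow b_m$ is an automorphism that is neither side-preserving nor the full reversal --- only $a_m$ and $b_m$ share a degree, and one checks directly that swapping them respects all edges. Your conclusion $|\aut(G')|\le 2$ is nonetheless correct, but the unjustified clause ``the only side-swapping candidate is the reversal'' should be replaced by a short case analysis on the four clique/independent patterns, using the degree sequence to pin down which vertices can possibly move.
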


\begin{proof}
	First, suppose $n=2k$ is even. Let $H_k \in \CC$ be a semi-induced half-graph of order $k$, with  $a_1,\dots,a_k,b_1,\dots,b_k \in H_k$ such that $\phi(a_i,b_j)\Leftrightarrow i<j$. Then for every pair of permutations $\sigma, \tau \in \sym_k$, we may consider the labeled copy of $H_k$ labelling each $a_i$ by $\sigma(i)$ and each $b_j$ by $\tau(j)$. All of these labellings are distinct, and so  $\oi_{n}(\CC)\geq (k!)^2\geq \frac{(2k)!}{2^{2k}} = \frac{n!}{2^{n}}$ for sufficiently large $n$ (using Stirling's approximation for the last inequality).
    
    If $n$ is odd with $n=2k+1$, we have
\[\oi_n(\CC)\geq \oi_{2k}(\CC)\geq \frac{(n-1)!}{2^{n-1}}=\frac{2}{n}\frac{n!}{2^{n}}\geq \frac{n!}{(2+\varepsilon)^n}\] for $m$ sufficiently large.
\end{proof}

\begin{theorem} [{\cite[Theorem 1.1]{mahlmann2025forbidden}}]
    Let $\CC$ be a hereditary graph class of unbounded shrub-depth. Then $\CC$ contains either a flipped $3P_t$ for arbitrarily large $t$ or a semi-induced $H_t$ for arbitrarily large $t$.
\end{theorem}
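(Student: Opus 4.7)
My plan is to argue by the contrapositive: assume there is some $t_0$ such that no $G \in \CC$ contains a flipped $3P_{t_0}$ or a semi-induced $H_{t_0}$ as an induced substructure, and deduce that the shrub-depth of $\CC$ is bounded by some function of $t_0$.

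The first step is a structural reduction. Forbidding arbitrarily large semi-induced half-graphs is a stability-like condition that rules out the order property on singletons, and should place $\CC$ in a regime where every $G \in \CC$ admits a canonical tree-like flip-decomposition: leaves are vertices, internal nodes are equivalence classes obtained by iteratively flipping out dense bipartitions, and the depth of the tree equals the shrub-depth of $G$ up to an additive constant depending on $t_0$. This kind of decomposition is in spirit the same as the inductive description of $\omega$-categorical monadically stable structures from Fact~\ref{mstab induct}, but carried out locally inside each finite graph rather than in a single infinite limit.

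The second step is the extraction. If the shrub-depth of $\CC$ is unbounded then $\CC$ contains graphs whose decomposition trees have arbitrarily long root-to-leaf branches. Along such a long branch I would apply an induced bipartite Ramsey argument iteratively to stabilize the flip-type between consecutive levels. After stabilization, the resulting induced substructure is, up to a partition-flip, a disjoint union of paths whose length grows with the branch length. The reason one uses three parallel paths rather than one is combinatorial redundancy: a single vertex per level is compatible with too many flip patterns, whereas three independent vertices per level let a second Ramsey pass pin down the flip-type uniquely, producing a flipped $3P_t$ with $t$ tending to infinity.

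The main obstacle is the first step: constructing the tree-like decomposition and proving that its depth matches shrub-depth. This is the technical core of the cited work and relies on flip-width machinery developed specifically for monadically stable (and more generally monadically NIP) graph classes, which is not available from the results already in this paper. Once such a decomposition is in hand, the Ramsey-extraction in step two, as well as the complementary argument extracting a semi-induced $H_{t_0}$ in the unstable regime from a coding-like witness in the style of Definition~\ref{def:coding}, are comparatively routine applications of the product Ramsey theorem and its induced bipartite analogue.
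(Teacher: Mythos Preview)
The statement you are attempting to prove is not proved in this paper at all: it is quoted verbatim as \cite[Theorem 1.1]{mahlmann2025forbidden} and used as a black box in the proof of Lemma~\ref{lemma:sd lower}. There is therefore no ``paper's own proof'' to compare your proposal against; the authors simply import the result from Mahlmann's work and invoke it.

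As for the sketch itself, you correctly identify that the substance lies entirely in the cited reference, and you explicitly flag that the decomposition step ``is the technical core of the cited work and relies on flip-width machinery \ldots\ which is not available from the results already in this paper.'' That is an accurate self-assessment: your outline is a plausible high-level narrative of how such obstruction theorems are obtained, but it is not a proof, and nothing in the present paper would let you fill in step one. If the intent was to supply an independent proof, the proposal is far from complete; if the intent was to summarize the strategy of \cite{mahlmann2025forbidden}, that belongs in a remark rather than a proof environment, and in any case the paper as written does not attempt it.
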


\begin{lemma} \label{lemma:sd lower}
    Let $\CC$ be a hereditary graph class of unbounded shrub-depth. Then $\oi_n(\CC) > \frac{n!}{10^n}$ for sufficiently large $n$.
\end{lemma}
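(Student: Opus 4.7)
The plan is to split via the Mahlmann et al.\ dichotomy that immediately precedes this lemma: either $\CC$ contains a semi-induced $H_t$ for arbitrarily large $t$, or $\CC$ contains a flipped $3P_t$ for arbitrarily large $t$. In the first case, Lemma~\ref{lemma:half graphs} applied with, say, $\varepsilon=1$ gives $\oi_n(\CC) > n!/3^n > n!/10^n$ for sufficiently large $n$, and we are done.

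Now assume $\CC$ contains a flipped $3P_t$ for arbitrarily large $t$; for each such $t$ I fix some $G_t \in \CC$ that is a flipped $3P_t$, with the implicit partition $\PP$ into columns $C_1,\dots,C_t$ of size $3$ and flip $\II$. Setting $n=3t$, every labelling of $G_t$ as a graph on vertex set $[n]$ lies in $\CC$, and the number of distinct such labellings is exactly $n!/|\aut(G_t)|$. It therefore suffices to prove a structural bound of the form $|\aut(G_t)| \le c^t$ for some $c < 1000$; the examples I have checked (including the extreme case $\II = \emptyset$, where $G_t = 3P_t$ and $\aut(3P_t) \cong C_2 \wr S_3$ has order $48$) in fact suggest a constant bound like $C = 48$ always works. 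For values of $n$ not divisible by $3$, the same argument is applied to an induced subgraph of $G_{\lceil n/3 \rceil}$ on $n$ vertices, obtained by deleting one or two vertices; such a subgraph lies in $\CC$ by heredity and has a similarly bounded automorphism group by essentially the same analysis.

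The strategy for the automorphism bound is to show that $\PP$ is canonical in $G_t$. For vertices $u, v$ in the same column one computes $|N(u)\triangle N(v)| \le 6$, since the only differences come from the matching or anti-matching structures to the two adjacent columns (plus the possible edge between $u$ and $v$ themselves). For $u, v$ in distinct columns $j \neq j'$, the dominant contribution to $|N(u)\triangle N(v)|$ is $3$ times the Hamming distance between the rows $R_j, R_{j'}$ of $\II$ restricted to indices far from $j$ and $j'$, which is generically $\Theta(t)$. Hence for generic flips any automorphism preserves $\PP$; the induced column permutation must then preserve the path-adjacency on $[t]$ defined by the $t-1$ matching/anti-matching structures between consecutive columns, leaving only identity and reflection; and those same matchings force the within-column $S_3$-permutations to agree across all columns, giving $|\aut(G_t)| \le 12$.

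The main obstacle is handling the remaining flips, where many rows of $\II$ may be pairwise close (the most extreme case being $\II = \emptyset$ itself, where the Hamming-distance argument fails outright and individual paths of $3P_t$ can be reflected independently). In those cases, one has to show by a case analysis that all extra, non-$\PP$-preserving symmetries arise as elements of $\aut(3P_t)$ composed with automorphisms of $\II$ viewed as a graph on $[t]$, so that the uniform bound $|\aut(G_t)| \le |\aut(3P_t)| = 48$ continues to hold. Combining, for $n = 3t$ large we get $\oi_n(\CC) \ge n!/48 \gg n!/10^n$, and the extension to general $n$ via induced subgraphs completes the argument.
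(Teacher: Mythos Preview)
Your half-graph case is fine, and reducing Case~2 to an automorphism bound on the flipped $3P_t$ is a legitimate strategy, but you have not actually proved that bound. The ``case analysis'' you defer is precisely the content of the lemma in this case: your Hamming-distance argument only shows that $\PP$ is canonical when every pair of $\II$-rows differs in $\Theta(t)$ positions, and for the remaining flips (of which $\II=\emptyset$ is merely the simplest) you offer only the assertion that all non-$\PP$-preserving automorphisms sit inside $\aut(3P_t)$. You give no mechanism for establishing this, and flips in which the $\II$-rows fall into a few large blocks of identical rows are not obviously covered by anything you wrote. As it stands, Case~2 is a plan, not a proof.

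The paper avoids analysing $\aut(G_t)$ directly. It expands $G_t$ by three unary predicates $C_0,C_1,C_2$ marking the three \emph{rows} (the paths of the underlying $3P_t$) and writes down a single formula $\phi(x,y)$ in this expanded language that recovers the $3P_t$ edge relation from any flipped $3P_t$: two vertices $x\in C_i$, $y\in C_{i+1}$ lie in the same part of the (unique) minimal flip partition iff they have identical neighbourhoods in $C_{i+2}$, and once that partition is known the flip can be definably undone. Hence every automorphism of the expanded structure $G_t^+$ is an automorphism of $3P_t^+$, giving $|\aut(G_t^+)|\le 2^3$ and at least $(3k)!/8$ labelled copies of $G_t^+$. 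Since each labelled $G_t$ extends to at most $3^{3k}$ labelled $G_t^+$'s, one gets $\oi_n(\CC)\ge n!/(8\cdot 3^n)>n!/10^n$ with no case analysis at all. If you wish to salvage your direct approach, note that this argument already delivers the exponential bound $|\aut(G_t)|\le 8\cdot 27^t$ you were aiming for.
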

\begin{proof}
Case 1: Suppose $\CC$ contains arbitrarily large semi-induced $H_n$s. In this case we are done by Lemma \ref{lemma:half graphs}.

Case 2: Suppose $\CC$ contains arbitrarily large flipped $3P_k$s. We initially suppose $n = 3k$ with $k \in \omega$.  We will show that after expanding a flipped $3P_k$ by three unary predicates (one for each path), we may definably recover $3P_k$. But first, note that number of distinct labellings of $P_k$ is at least $k!/2$, since given $P_k = \set{p_1, \dots, p_k}$ and $\sigma, \tau \in \sym_k$ such that $\sigma$ is not the reverse of $\tau$ when written in one-line notation, the labelling that labels each $p_i$ by $\sigma(i)$ is distinct from the one that labels each $p(i)$ by $\tau(i)$. Thus the number of labellings of $3P_k$ is at least $(\frac{k!}{2})^3 \geq \frac{(3k)!}{27^k}$ (using Stirling's approximation). For each vertex of a flipped $3P_k$ we have 3 different choices as to which of the 3 unary predicates it belongs. Thus, paying a factor of $3^{3k}=27^k$ for the expansion by the three unary predicates, the number of labellings of a flipped $3P_k$ would be at least $\frac{(3k)!}{729^k} = \frac{(3k)!}{9^{3k}} = \frac{n!}{9^{n}}$. As at the end of the proof of Lemma \ref{lemma:half graphs}, we obtain the desired lower bound for $\CC_n$ for $n$ not necessarily divisible by 3.

So we now show that there is a formula $\phi(x,y)$ in the language of graphs with three unary predicates such that on any flipped $3P_k$ expanded by one unary predicate for each path (in the unflipped graph), $\phi(x,y)$ defines the edge relation of $3P_k$. We essentially just repeat three times the argument of \cite[Lemma 4.11]{mahlmann2025forbidden} that definably recovers a single path, but we recall the argument here.  Let $H$ be a flipped $3P_k$. By \cite[Lemma 2.8]{mahlmann2025forbidden}, there is a unique minimum-cardinality partition $\PP$ of the vertices of $3P_k$ so that $H$ can be obtained as a $\PP$-flip. Let $C_i$ for $i \in \set{0,1,2}$ be the three unary predicates expanding $H$, let $E$ be the edge relation, and take all addition to be mod 2. For each $i \in \set{0,1,2}$, let $\pi_i(x,y)$ state that $x \in C_i$, $y \in C_{i+1}$ and $x, y$ have the same neighborhood in $C_{i+2}$. Then $\pi_i(x,y)$ holds if and only if $x$ and $y$ are in the appropriate unary predicates and in the same $\PP$-class. Next, let $\psi_i(x,y) := x, y \in C_i \wedge\exists z(\pi_i(y,z) \wedge E(x,z))$. Then $\psi_i(x,y)$ holds if and only if $x,y$ are in $C_i$ and their $\PP$-classes have been flipped with respect to each other in the (unique) $\PP$-flip that yields $H$ from $3P_k$. Thus we let $\phi(x,y) := \bigvee_{i \in \set{0,1,2}} x \neq y \wedge x, y \in C_i \wedge (\psi_i(x,y) \leftrightarrow \neg E(x,y))$.
\end{proof}

\begin{theorem} \label{thm:graph gap}
    Let $\CC$ be a hereditary graph class. Then one of the following holds.
    \begin{enumerate}
        \item $\CC$ has bounded shrub-depth and $\oi_n(\CC) < \frac{n!}{c^n}$ for every $c \in \R$, for sufficiently large $n$.
        \item $\CC$ has unbounded shrub-depth, and $\oi_n(\CC) > \frac{n!}{10^n}$, for sufficiently large $n$.
    \end{enumerate}
\end{theorem}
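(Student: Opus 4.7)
The plan is to simply combine the three major results already assembled in this section, namely Proposition~\ref{prop:sd catms}, Theorem~\ref{ms_slow}, and Lemma~\ref{lemma:sd lower}. All the substantive work is done; the theorem is essentially a bookkeeping statement that case-splits on whether $\CC$ has bounded or unbounded shrub-depth.

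For case~(2), I would just apply Lemma~\ref{lemma:sd lower} directly, which already gives the bound $\oi_n(\CC) > n!/10^n$ for sufficiently large $n$ under the hypothesis that $\CC$ has unbounded shrub-depth.

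For case~(1), suppose $\CC$ has bounded shrub-depth. By Proposition~\ref{prop:sd catms}, we may write $\CC = \bigcup_{i=1}^k \mathrm{Age}(\fa_i)$ for finitely many $\omega$-categorical monadically stable graphs $\fa_1, \dots, \fa_k$. Then $\oi_n(\CC) \leq \sum_{i=1}^k \oi_n(\fa_i)$: any labelled graph in $\CC$ on vertex set $[n]$ must embed (as an induced subgraph, hence as a labelled substructure) into some $\fa_i$, so the count is bounded by the sum of the number of labelled $n$-substructures of each $\fa_i$, which is $\oi_n(\fa_i)$. Fix an arbitrary $c \in \R$. By Theorem~\ref{ms_slow}, applied to each $\fa_i$ with the value $c+1$ (or any value strictly greater than $c$), we have $\oi_n(\fa_i) < n!/(c+1)^n$ for all sufficiently large $n$. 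Summing, $\oi_n(\CC) < k \cdot n!/(c+1)^n$, and since $k \cdot (c/(c+1))^n \to 0$, this is eventually less than $n!/c^n$.

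The only minor subtlety is justifying the bound $\oi_n(\CC) \leq \sum_{i=1}^k \oi_n(\fa_i)$, which requires noting that for an $\omega$-categorical structure $\fa$ with $\mathrm{Age}(\fa) = \mathcal{D}$, we have $\oi_n(\mathcal{D}) \leq \oi_n(\fa)$; this is already pointed out in Section~\ref{sec:prelim}. There is no real obstacle here, since each of the ingredient statements is already proved earlier in the section; the theorem is effectively a summary statement collecting them into a gap dichotomy.
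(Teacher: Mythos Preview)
Your proposal is correct and matches the paper's proof essentially line for line: case~(2) is Lemma~\ref{lemma:sd lower}, and case~(1) is Proposition~\ref{prop:sd catms} followed by the bound $\oi_n(\CC) \leq \sum_{i=1}^k \oi_n(\fa_i)$ and Theorem~\ref{ms_slow}. Your added $\varepsilon$-room argument (using $c+1$ in place of $c$) is a harmless elaboration of a step the paper leaves implicit.
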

\begin{proof}
    If $\CC$ has unbounded shrub-depth, the lower bound is given by Lemma \ref{lemma:sd lower}. If $\CC$ has bounded shrub-depth, then by Proposition  \ref{prop:sd catms} it is a finite union of ages of $\omega$-categorical monadically stable graphs $\set{G_i : i \in [k]}$. Then $\oi_n(\CC) \leq \Sigma_{i \in [k]} \oi_n(G_i)$, and the upper bound follows from Theorem \ref{ms_slow}.
\end{proof}

\section{Growth rates in weakly finite sets via absoluteness}
\label{sec:growth-rates-in-weakly-finite-sets}

Since we will be discussing a number of properties of sets similar to those in Fact~\ref{fact:WalczakTypkeCorrespondence}, we will introduce some systematic terminology for converting model-theoretic properties to set-theoretic properties as suggested by the correspondence in Fact~\ref{fact:WalczakTypkeCorrespondence}. For the sake of the following definition, we will fix the bookkeeping convention that whenever we write $L[T]$ for a theory $T$, we have coded the language and formulas of $T$ as ordinals. In particular, we will only ever write $L[T]$ in a context in which the language of $T$ has an implicitly given well-ordering. The significance of this is that it means that for any theory $T$, $T \in L[T]$ and $L[T]$ satisfies \ZFC.

\begin{definition}\label{defn:nebulously}
  Given a property of theories $P$, a set $A$ is \emph{nebulously $P$} if for any countable theory $T$ admitted by $A$, $L[T]$ satisfies `$T$ is a $P$ theory.'
\end{definition}

The word `nebulous' is meant to evoke a connection to amorphicity, but we have avoided the more obvious terminology `amorphously $P$' given that for most of the properties $P$ considered, the nebulous version is strictly weaker than amorphicity.

So, rephrasing Fact~\ref{fact:WalczakTypkeCorrespondence} in this terminology, we have, for instance, that a set $A$ is strictly Mostowski-finite if and only if it is nebulously $\omega$-categorical and NSOP. Note that as the word is defined here, the class of `nebulously o-minimal theories' does not really make sense, in that o-minimality is defined in terms of a designated order relation (although nebulous convex orderability and nebulous distality do make sense, of course). It would be relatively easy to extend the definition to this kind of property, in order to include notions like o-amorphicity, but in the interest of reducing technicality we have not done this.

We should stop for a moment to comment on the use of the inner model $L[T]$ in Definition~\ref{defn:nebulously}. This is a technical device to avoid needing to worry about finding the correct choice-free analog of a given model-theoretic property if it is not phrased in a set-theoretically absolute way. Working in $L[T]$ guarantees that any \ZFC-provable arithmetical properties\footnote{Or more generally $\Sigma^1_2$ properties by Shoenfield absoluteness.} of $T$ will yield \ZF-provable properties of $A$. Note though that most model-theoretic properties of theories are set-theoretically absolute in the context of countable theories. For example, uncountable categoricity is not prima facie an arithmetical property of countable theories, but was shown to be by Andrews and Makuluni in \cite{andrews2013}.

Recall that a \emph{one-dimensional interpretation} is an interpretation of a theory $T_0$ into a theory $T_1$ using a definable quotient of a definable set in the home sort.

\begin{proposition}\label{prop:nebulous-free-omega-cat}
  $(\ZF)$ If $P$ is a property of first-order theories such that \ZFC\ proves `$P$ is preserved under one-dimensional interpretation\footnote{In the sense that if $T$ satisfies $P$ and $T'$ has a one-dimensional interpretation in $T$, then $T'$ satisfies $P$.} and $\thy(\omega,+,\times)$ is not $P$', then any nebulously $P$ set is power-Dedekind-finite.
\end{proposition}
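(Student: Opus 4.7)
The plan is to prove the contrapositive: if $A$ is not power-Dedekind-finite, then $A$ is not nebulously $P$. So fix a surjection $f \colon A \twoheadrightarrow \omega$. Using $f$, I will build a concrete countable theory admitted by $A$ that one-dimensionally interprets $\thy(\omega,+,\times)$ via atomic formulas, and then use the preservation hypothesis inside the inner model $L[T]$ to derive a contradiction.

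To build the theory, let $\Lc = \{E, R_+, R_\times\}$ with $E$ binary and $R_+, R_\times$ ternary, and define an $\Lc$-structure $\Afr$ on $A$ by pulling back along $f$: set $E(a,b)$ iff $f(a) = f(b)$, set $R_+(a,b,c)$ iff $f(a) + f(b) = f(c)$, and similarly for $R_\times$. Let $T = \thy(\Afr)$. Then $T$ is a theory in a finite (hence well-orderable) language, admitted by $A$, and the quotient of the home sort by $E$ together with the induced ternary relations is, via $[a]_E \mapsto f(a)$, literally the structure $(\omega, +, \times)$. Because $E, R_+, R_\times$ are atomic, the data of this one-dimensional interpretation of $\thy(\omega,+,\times)$ into $T$ is a finite tuple of formulas, which trivially lies in $L[T]$.

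Next I work inside $L[T]$, which satisfies \ZFC. By the nebulous-$P$ hypothesis applied in $V$, $L[T] \models$ ``$T$ is $P$'', and since \ZFC\ proves that $P$ is preserved under one-dimensional interpretation, $L[T]$ also models ``the interpreted theory is $P$''. But $\omega$ and the truth of first-order sentences in $(\omega,+,\times)$ are absolute between $L[T]$ and $V$, so the interpreted theory computed in $L[T]$ is precisely $\thy(\omega,+,\times)$. Since \ZFC\ proves $\thy(\omega,+,\times)$ is not $P$, this contradicts $L[T] \models \ZFC$, completing the proof.

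The one place that needs to be checked with care is that this construction really produces a one-dimensional interpretation in the precise technical sense the hypothesis requires (quotient of a definable subset of the home sort by a definable equivalence relation, which is exactly our setup with $E$ on $x = x$); the rest is the standard $L[T]$/absoluteness bookkeeping already flagged as the operating framework in the paper's introduction.
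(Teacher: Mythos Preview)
Your proposal is correct and follows essentially the same argument as the paper: prove the contrapositive by pulling back $(\omega,+,\times)$ along a surjection $f\colon A\to\omega$ to a finite relational language on $A$, observe that the resulting theory one-dimensionally interprets $\thy(\omega,+,\times)$ with the interpretation data living in $L[T]$, and conclude in $L[T]$ that $T$ is not $P$. Your version is slightly more explicit about the absoluteness bookkeeping (that the interpretation formulas are atomic and that $\thy(\omega,+,\times)$ is computed the same way in $L[T]$ and $V$), but the idea and the construction are identical.
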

\begin{proof}
  Assume that $A$ is not power-Dedekind-finite and let $f: A \to \omega$ be a surjection. Let $\Lc$ be a language with a binary relation $E$ and two ternary relations $F_+$ and $F_\times$. Make $A$ into an $\Lc$-structure such that $E(a,b)$ holds if and only if $f(a) = f(b)$ and $F_{\bullet}(a,b,c)$ holds if and only if $f(a)\bullet f(b)=f(c)$ for $\bullet \in \{{+},{\times}\}$. Let $T = \thy(A)$. Clearly we have that $T$ has a one-dimensional interpretation of $\thy(\omega,+,\times)$ (and moreover this interpretation clearly lives in $L[T]$). Therefore, since $P$ is preserved under one-dimensional interpretations, we have that $T$ is not $P$, whereby $A$ is not nebulously $P$.
\end{proof}

As a corollary we get (in \ZF) that for many of the tameness properties $P$ considered by model theorists (e.g., stability, NIP, higher-arity generalizations thereof, NTP$_2$, NSOP, NATP, rosiness), any nebulously $P$ set is power-Dedekind-finite (i.e., nebulously $\omega$-categorical). Moreover, we can mildly strengthen Fact~\ref{fact:WalczakTypkeCorrespondence}.

\begin{corollary}\label{cor:Walczak-Typke-rephrase}
  $(\ZF)$ Fix a set $A$.
  \begin{itemize}
  \item $A$ is power-Dedekind-finite if and only if it is nebulously $\omega$-categorical.
  \item $A$ is strictly Mostowski-finite if and only if it is nebulously NSOP.
  \item $A$ is amorphous if and only if it is nebulously strongly minimal.
  \end{itemize}
\end{corollary}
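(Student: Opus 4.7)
The strategy is to combine Fact~\ref{fact:WalczakTypkeCorrespondence}, Proposition~\ref{prop:nebulous-free-omega-cat}, and the absoluteness between $V$ and $L[T]$ of the three model-theoretic properties involved. The first step I would carry out is verifying this absoluteness. Each of $\omega$-categoricity (via Ryll--Nardzewski, reducing to finiteness of each type space), NSOP (expressed by a uniform consistency condition saying no formula admits arbitrarily long finite chains), and strong minimality (expressed by the uniform bound on $|\phi(M;\bar b)|$ coming from elimination of $\exists^\infty$) is arithmetically expressible in a code of the countable theory $T$. Since $T$ and $\omega$ belong to $L[T]$, arithmetical properties of $T$ hold in $V$ iff they hold in $L[T]$.

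For each forward implication ($\Rightarrow$), I would simply invoke Fact~\ref{fact:WalczakTypkeCorrespondence} in $V$ (this is available in $\ZF$). If $A$ enjoys the relevant weak finiteness property, then every countable theory $T$ admitted by $A$ is $\omega$-categorical (and additionally NSOP, resp.\ strongly minimal, as appropriate) in $V$; absoluteness transports this to $L[T]$, so $A$ is nebulously $P$ for the $P$ in question. Note that in the Mostowski-finite and amorphous cases we obtain more than claimed (we get nebulous $\omega$-categoricity as well), but only the weaker conclusion is asserted.

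For the converse ($\Leftarrow$), I would first bootstrap nebulous $P$ to power-Dedekind-finiteness. When $P$ is $\omega$-categoricity, this is Proposition~\ref{prop:nebulous-free-omega-cat} applied directly. When $P$ is NSOP, I would check the hypotheses of Proposition~\ref{prop:nebulous-free-omega-cat}: NSOP is preserved under one-dimensional interpretation (any strict order on a definable quotient $X/E$ pulls back along the interpretation to a strict order on the representatives in $X$ via the $E$-invariant lifted formula), and $\thy(\omega,+,\times)$ is not NSOP (its standard order witnesses SOP). When $P$ is strong minimality, rather than verify preservation under interpretation directly, I would use that ZFC proves strong minimality implies NSOP, so $L[T]\models\text{``}T$ is strongly minimal\text{''} entails $L[T]\models\text{``}T$ is NSOP\text{''}, reducing to the previous case.

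Having obtained power-Dedekind-finiteness, Fact~\ref{fact:WalczakTypkeCorrespondence} in $V$ tells us that every countable theory admitted by $A$ is $\omega$-categorical in $V$. The hypothesized $P$ holds in $L[T]$, and by the absoluteness set up at the start this transfers back to $V$, so in $V$ every such theory is both $\omega$-categorical and $P$. A final application of Fact~\ref{fact:WalczakTypkeCorrespondence} yields the targeted weak finiteness notion. The only genuine technical content is the absoluteness analysis, especially for strong minimality, where one must carefully verify that the elimination of $\exists^\infty$ gives a uniform arithmetical statement in $T$; once that is in place, the remaining argument is a diagram chase between $V$ and $L[T]$.
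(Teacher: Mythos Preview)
Your proposal is correct and follows the same approach as the paper. The paper's one-line proof (``This follows immediately from Fact~\ref{fact:WalczakTypkeCorrespondence} and Proposition~\ref{prop:nebulous-free-omega-cat}'') invokes exactly the two ingredients you use; the absoluteness bridge between the ``in $V$'' formulation of Fact~\ref{fact:WalczakTypkeCorrespondence} and the ``in $L[T]$'' formulation of nebulousness is left implicit there, justified by the remark immediately preceding the corollary that most model-theoretic properties of countable theories are set-theoretically absolute, and you have simply made that step explicit. One incidental remark: strong minimality is in fact preserved under one-dimensional interpretation (pullbacks of definable sets along the quotient map remain definable, hence finite or cofinite), so Proposition~\ref{prop:nebulous-free-omega-cat} applies directly to $P=\text{strong minimality}$ as well; your detour through NSOP is correct but not needed.
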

\begin{proof}
  This follows immediately from Fact~\ref{fact:WalczakTypkeCorrespondence} and Proposition~\ref{prop:nebulous-free-omega-cat}.
\end{proof}

Using the above machinery, we can now prove purely finitary combinatorial properties of certain kinds of weakly finite sets in \ZF{} using model-theoretic results originally shown in \ZFC.

\begin{definition}
  Fix a set $A$ and a family $\Cfr = (C_n)_{n \in \omega}$ of sets with $C_n \subseteq \Pc(A^n)$ for each $n \in \omega$.   We say that such a family $\Cfr$ is \emph{countable} if $\bigsqcup_{n \in \omega} C_n$ is countable. We say that $\Cfr$ is \emph{well-orderable} if $\bigsqcup_{n \in \omega} C_n$ is well-orderable.

  We let $s_n(\Cfr)$ be the number of atoms in the Boolean subalgebra of $\Pc(A^n)$ generated by $C_n$ if $C_n$ is finite and we let $s_n(\Cfr) = \infty$ otherwise.

  If each $C_n$ is a subset of $\Pc(A^{(n)})$, then we will also write $\ell_n(\Cfr)$ for the number of atoms in the Boolean subalgebra of $\Pc(A^{(n)})$ generated by $C_n$ if $C_n$ is finite (and again we let $\ell_n(\Cfr) = \infty$ if $C_n$ is infinite).
\end{definition}

Note that by an easy argument $\bigsqcup_{n \in \omega} C_n$ is well-orderable (resp.~countable) if and only if $\bigcup_{n \in \omega} C_n$ is well-orderable (resp.~countable). Note also that if $C_n$ is finite, then it is immediate that $|C_n| \leq 2^{s_n(\Cfr)}$ (and $|C_n| \leq 2^{\ell_n(\Cfr)}$ if $\ell_n(\Cfr)$ is defined).

\begin{definition}
  Given a set $A$ and a well-orderable family $\Cfr = (C_n)_{n \in \omega}$ with $C_n \subseteq \Pc(A^n)$ for each $n \in \omega$, the \emph{language induced by $\Cfr$} is a language with an $n$-ary predicate symbol $Q$ for each element of $C_n$ (with some coding as ordinals chosen using a given well-ordering of $\bigsqcup_{n\in \omega} C_n$). The \emph{structure on $A$ induced by $\Cfr$} is the obvious interpretation of $A$ as a structure in the language induced by $\Cfr$. We write $\thy(\Cfr)$ for the theory of structure.
\end{definition}

\begin{lemma}\label{lem:well-orderable-implies-countable}
  $(\ZF)$ If $A$ is power-Dedekind-finite and $\Cfr= (C_n)_{n \in \omega}$ with $C_n \subseteq \Pc(A^n)$ is well-orderable, then it is countable and each $C_n$ is finite.
\end{lemma}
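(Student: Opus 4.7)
The plan is to deduce both assertions from the claim that $\Pc(A^n)$ is Dedekind-finite for every $n \in \omega$. Once this is known, each $C_n$ is a well-orderable subset (as a subset of the well-orderable set $\bigsqcup_k C_k$) of the Dedekind-finite set $\Pc(A^n)$, hence finite: any well-orderable infinite set embeds $\omega$, which would violate Dedekind-finiteness. Given that each $C_n$ is finite, $\bigsqcup_n C_n$ is a well-orderable countable union of finite sets, hence countable (its cardinality is a countable ordinal sum of finite cardinals).

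The content is therefore to show that $A^n$ is power-Dedekind-finite for every $n$, which I would prove by induction on $n$, with the inductive step reducing to the claim that if $X, Y$ are power-Dedekind-finite, then so is $X \times Y$. Suppose for contradiction that $f: X \times Y \to \omega$ is a surjection, and for each $x \in X$ set $R(x) := \{f(x,y) : y \in Y\} \subseteq \omega$. If some $R(x)$ is infinite, then composing $y \mapsto f(x,y)$ with the order-isomorphism $R(x) \cong \omega$ (canonical since $R(x) \subseteq \omega$) yields a surjection $Y \to \omega$, contradicting the power-Dedekind-finiteness of $Y$. Otherwise every $R(x)$ is finite, and the map $g: X \to \omega$ defined by $g(x) := \max R(x)$ (with $g(x) := 0$ if $R(x) = \emptyset$) has unbounded image — since $f$ is surjective, every $n \in \omega$ lies in $R(x)$ for some $x$, forcing $g(x) \geq n$ — so composing $g$ with the canonical bijection of $g(X)$ onto $\omega$ gives a surjection $X \to \omega$, contradicting the power-Dedekind-finiteness of $X$.

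The main obstacle is executing this argument in pure \ZF, without slipping in a choice principle when enumerating fibers. What makes it go through is that all the sets whose enumeration we require, namely $R(x)$ and $g(X)$, are subsets of $\omega$ and so come equipped with a canonical well-ordering; no choice is needed to identify an infinite subset of $\omega$ with $\omega$ itself, nor to pick a maximum of a finite subset of $\omega$.
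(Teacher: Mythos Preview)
Your argument is correct, but it takes a different route from the paper. The paper invokes the Walczak-Typke correspondence (Fact~\ref{fact:WalczakTypkeCorrespondence}): since $A$ is power-Dedekind-finite, the theory $\thy(\Cfr)$ in the language induced by $\Cfr$ is $\omega$-categorical, so by Ryll--Nardzewski there are only finitely many formulas in $n$ free variables up to equivalence, and by extensionality the distinct elements of $C_n$ give distinct definable sets, forcing $C_n$ finite. Your approach is purely set-theoretic: you show directly that power-Dedekind-finiteness is preserved under binary (hence finite) products, so each $\Pc(A^n)$ is Dedekind-finite, and a well-orderable subset of a Dedekind-finite set is finite. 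Your route is more elementary and entirely self-contained, avoiding the model-theoretic black box; the paper's route is shorter given the machinery already in place, and it surfaces the $\omega$-categoricity of $\thy(\Cfr)$ that is used repeatedly in the rest of the section. Your case split on whether some fiber image $R(x)$ is infinite, and the observation that all auxiliary enumerations happen inside $\omega$ so no choice is needed, are both clean and correct.
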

\begin{proof}
  Since $\bigsqcup_{n \in \omega} C_n$ is well-orderable, it is immediate that if each $C_n$ is finite, then $\bigsqcup_{n \in \omega} C_n$ is countable. By Fact~\ref{fact:WalczakTypkeCorrespondence}, $\thy(\Cfr)$ is $\omega$-categorical and so only has finitely many formulas with $n$ free variables up to logical equivalence. Therefore, by extensionality, each $C_n$ is finite.
\end{proof}

We have the following general result, which we will use in a few specific cases. Given a first-order theory $T$, let $S^\ast_n(T)$ denote the space of $n$-types of $n$-tuples of distinct elements.

\begin{proposition}\label{prop:main-meta-prop}
  Let $P$ be a property of first-order theories and let $\Oc$ be an arithmetically definable set of functions from $\omega$ to $\omega$ that is downwards closed (i.e., if $g \in \Oc$ and $f(n)\leq g(n)$ for all $n \in \omega$, then $f \in \Oc$). Assume that the following is provable in \ZFC:
  \begin{itemize}
    \item[$(\ast)$] If $T$ is a countable $P$ theory, then it is $\omega$-categorical and $(n \mapsto |S_n(T)|) \in \Oc$.
  \end{itemize}
  Then \ZF{} proves that for any set $A$ and well-orderable $\Cfr = (C_n)_{n \in \omega}$ (with $C_n \subseteq \Pc(A^n)$ for each $n \in \omega$), if $\thy(\Cfr)$ satisfies $P$ in $L[\thy(\Cfr)]$, then $C_n$ is finite for each $n$ and $(n \mapsto s_n(\Cfr)) \in \Oc$.

  The same statement also holds if we replace $S_n(T)$ with $S^\ast_n(T)$ and $s_n(\Cfr)$ with $\ell_n(\Cfr)$.
\end{proposition}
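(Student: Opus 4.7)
The approach is to apply $(\ast)$ inside the inner model $L[T]$, where $T = \thy(\Cfr)$, and then transfer the arithmetical conclusion back to $V$ by absoluteness. Since $\bigsqcup_n C_n$ is well-orderable in $V$, the language of $T$ may be coded using ordinals, so $T \in L[T]$ and $L[T]$ is a model of \ZFC{} with $L[T] \models$ ``$T$ satisfies $P$'' by hypothesis.

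The one subtlety is that $(\ast)$ only speaks about countable theories, so I would first show $T$ is countable in $L[T]$ (equivalently, that each $C_n$ is finite), which is also the first half of the desired conclusion. This step mirrors the proof of Lemma~\ref{lem:well-orderable-implies-countable}: suppose for contradiction that some $C_n$ contains a countably infinite subset $D$, extend it to a countable subfamily $\Cfr' \subseteq \Cfr$, and let $T' = \thy(\Cfr')$. Then $T'$ is countable, and since $T$ is $P$ in $L[T]$ and (in all intended applications) $P$ passes to reducts, $T'$ is $P$ in $L[T']$ as well. Applying $(\ast)$ in $L[T']$ yields that $T'$ is $\omega$-categorical there, and the Ryll-Nardzewski argument gives only finitely many $n$-ary formulas modulo $T'$-equivalence. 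But since $A \models T'$ and the predicates in $D$ are extensionally distinct as subsets of $A^n$, they are pairwise $T'$-inequivalent, contradicting finiteness. Hence each $C_n$ is finite and $T$ is countable in both $V$ and $L[T]$.

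Now $(\ast)$ applies to $T$ in $L[T]$ directly, giving $L[T] \models (n \mapsto |S_n(T)|) \in \Oc$. Since $\Oc$ is arithmetically definable and $|S_n(T)|$ is recursively computable from $T$, this statement is absolute between $L[T]$ and $V$, so it holds in $V$. The atoms of the Boolean subalgebra of $\Pc(A^n)$ generated by $C_n$ correspond bijectively to the equality-free atomic $n$-types of $T$ realized in $A$, and each such atomic type is refined by at least one complete $n$-type of $T$ (consistent because $A \models T$), so $s_n(\Cfr) \leq |S_n(T)|$. Downward closure of $\Oc$ then gives $(n \mapsto s_n(\Cfr)) \in \Oc$. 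The $S^\ast_n / \ell_n$ version proceeds identically, with $A^{(n)}$ in place of $A^n$. The main obstacle is the reduction to countable $T$, which implicitly assumes $P$ passes to reducts --- automatic for the intended applications but not formally part of $(\ast)$.
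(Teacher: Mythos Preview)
Your overall strategy matches the paper's: pass to $L[T]$ with $T = \thy(\Cfr)$, apply $(\ast)$ there, bound $s_n(\Cfr) \leq |S_n(T)|$, invoke downward closure of $\Oc$, and transfer the conclusion to $V$ by arithmetical absoluteness. The only divergence is in the step establishing that each $C_n$ is finite (so that $T$ may be coded in $\omega$ and $(\ast)$ applies in $L[T]$).

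Your proposed route---pass to a countable subfamily $\Cfr' \subseteq \Cfr$ and argue that $T'=\thy(\Cfr')$ inherits $P$---requires that $P$ be preserved under reducts, which you correctly flag as extraneous to the stated hypotheses. The paper does not go this route: it simply invokes Lemma~\ref{lem:well-orderable-implies-countable} to conclude that each $C_n$ is finite. That lemma, however, has ``$A$ is power-Dedekind-finite'' among its hypotheses, which is likewise not part of the statement of the proposition, so the paper's proof is also formally incomplete at this point. In every application of the proposition in the paper, $A$ has already been shown to be power-Dedekind-finite (via Proposition~\ref{prop:nebulous-free-omega-cat} or Fact~\ref{fact:WalczakTypkeCorrespondence}), so the gap is harmless in context. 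Your detour through reducts is therefore unnecessary: the intended argument uses power-Dedekind-finiteness of $A$ directly to get each $C_n$ finite, after which the language can be coded in $\omega$, making $T$ a subset of $\omega$ and hence automatically countable in $L[T]$; the remainder of your argument then goes through verbatim.
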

\begin{proof}
  By Lemma~\ref{lem:well-orderable-implies-countable}, we have that $\Cfr$ is countable. Let $T = \thy(\Cfr)$.
  Since $T$ is a set of ordinals, $L[T]$ is a model of $\ZFC$.

  Since $T$ satisfies $P$ in $L[T]$, we have by $(\ast)$ that $L[T]$ satisfies that $T$ is $\omega$-categorical and has that $(n \mapsto |S_n(T)|) \in \Oc$. Since $\Oc$ is downwards closed, we have that (again in $L[T]$) that $(n \mapsto s_n(\Cfr)) \in \Oc$. Since $\Oc$ is arithmetically definable, this implies that $(n \mapsto s_n(\Cfr)) \in \Oc$ in our original model $V$ of \ZF{} as well, so we are done.

Note that $s_n(\Cfr)$ is now upper bounded by the number of quantifier-free $n$-types in the structure on $A$ induced by $\Cfr$ and so in particular satisfies that $s_n(\Cfr) \leq |S_n(T)|$ for each $n \in \omega$.

  The proof of the version of the proposition involving $S^\ast_n(T)$ and $\ell_n(\Cfr)$ is the same.
\end{proof}

The easiest application of Proposition~\ref{prop:main-meta-prop} is in the case of amorphous sets, using the following result of Steitz.

\begin{fact}[{\cite{Steitz1992}}]\label{fact:Steitz}
  $(\ZFC)$ If $T$ is a countable $\omega$-stable, $\omega$-categorical theory, then there is a $k \in \omega$ such that $|S_n(T)| \leq 2^{kn^2}$ for all $n \in \omega$.
\end{fact}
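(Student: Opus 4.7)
The plan is to leverage the deep structure theory of $\omega$-categorical, $\omega$-stable theories. By the Cherlin--Harrington--Lachlan theorem, every such $T$ has finite Morley rank and admits a coordinatization by finitely many definable strongly minimal sets whose pregeometries, by the Cherlin--Mills--Zilber classification, are either trivial or associated with a projective or affine geometry over some finite field $\mathbb{F}_q$. I would first bound $|S_n|$ in each of these three strongly minimal cases, then combine the bounds via coordinatization to get the desired bound on $|S_n(T)|$.

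For the strongly minimal base cases: in the trivial case an $n$-type is determined by the equality pattern on the tuple, so $|S_n| \leq B_n$, which is well within $2^{n^2}$. In the projective case over $\mathbb{F}_q$, an $n$-type is determined by the linear-dependency pattern among the $n$ vectors, encodable by an $n \times n$ matrix of coefficients over $\mathbb{F}_q$ spanning a basis of relations, giving at most $q^{n^2}$ types. The affine case follows by translating to the associated vector space of differences. Thus each strongly minimal layer contributes a bound of the form $2^{c n^2}$. To lift this to $T$ itself, I would use that every element of a model of $T$ is algebraic over a tuple from the strongly minimal coordinates of uniformly bounded length $c_0$, with algebraic multiplicity also uniformly bounded by $\omega$-categoricity. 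Hence an $n$-type in $T$ is determined by a type of length at most $c_0 n$ in the finite union of strongly minimal coordinates, together with bounded ``attachment'' data per element. Summing the resulting exponents across the finitely many coordinatizing levels yields $|S_n(T)| \leq 2^{k n^2}$ for a uniform $k \in \omega$.

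The main obstacle is rigorously controlling the coordinatization step so that the attachment data between a tuple and its coordinates contributes at most $O(n^2)$ rather than something worse. This is handled by inducting on Morley rank and observing that each fibration step replaces one element by a bounded-length coordinate tuple and introduces at most constantly many cross-level dependencies per pair of elements, so the total code size grows quadratically in the tuple length. The heavy structural theorems do the conceptual work; the counting is then essentially linear algebra over $\mathbb{F}_q$ combined with careful bookkeeping.
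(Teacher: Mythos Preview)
The paper does not prove this statement at all: it records it as a cited Fact from \cite{Steitz1992} and uses it as a black box. So there is no ``paper's own proof'' to compare against; what you have written is a proof sketch where the paper simply defers to the literature.

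That said, your outline is the natural one and is essentially how the result is obtained: invoke Cherlin--Harrington--Lachlan to get finite Morley rank and a coordinatization by finitely many strongly minimal sets, classify those via Cherlin--Mills--Zilber as disintegrated or (affine/projective) $\mathbb{F}_q$-geometries, bound $n$-types in each by $2^{O(n^2)}$, and then lift. The strongly minimal bounds you give are correct. The part that genuinely needs more than you have written is the lifting step: coordinatization in CHL takes place in $T^{\mathrm{eq}}$ and has a tree structure of bounded height, with each level either finite or strongly minimal \emph{over parameters from the level below}. To make your ``attachment data is $O(n^2)$'' claim rigorous, you need to argue that the type of an $n$-tuple is determined by the type of the associated tree of coordinates (a tuple of length $O(n)$ in $T^{\mathrm{eq}}$), and that the interaction between coordinates living in \emph{different} strongly minimal fibres is controlled by orthogonality so that no super-quadratic cross-terms appear. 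Your final paragraph gestures at this but does not pin it down; in particular ``constantly many cross-level dependencies per pair of elements'' is the right shape of statement but is not obvious without invoking orthogonality of the strongly minimal components and the uniform bound on algebraic multiplicity from $\omega$-categoricity. None of this is wrong, but it is where the actual work lies.
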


\begin{proposition}\label{prop:amorphous-case}
  $(\ZF)$ For any amorphous set $A$ and family $(C_n)_{n \in \omega}$ with $C_n \subseteq \Pc(A^n)$ for each $n$, if $\bigcup_{n \in \omega}C_n$ is well-orderable, then there is a $k$ such that $|C_n| \leq 2^{2^{kn^2}}$ for all $n \in \omega$.
\end{proposition}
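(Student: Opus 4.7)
The plan is to combine the nebulous characterization of amorphicity with the Steitz bound via the meta-theorem of Proposition~\ref{prop:main-meta-prop}. First I would unpack what amorphicity gives us model-theoretically: by Corollary~\ref{cor:Walczak-Typke-rephrase}, $A$ amorphous is equivalent to $A$ being nebulously strongly minimal, so for any countable theory $T$ admitted by $A$ (with a well-orderable language), $L[T]$ satisfies that $T$ is strongly minimal. Inside $L[T]$, where \ZFC{} holds, strongly minimal implies $\omega$-stable, and amorphicity additionally gives nebulous $\omega$-categoricity (since amorphous implies power-Dedekind-finite). Hence $L[T]$ sees $T$ as an $\omega$-stable, $\omega$-categorical countable theory, so Fact~\ref{fact:Steitz} applies inside $L[T]$ and gives some $k \in \omega$ with $|S_n(T)|^{L[T]} \leq 2^{kn^2}$ for all $n$.

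Next, I would set up the application of Proposition~\ref{prop:main-meta-prop}. Take $P$ to be the property ``$\omega$-stable and $\omega$-categorical'' and let
\[
\Oc = \{\, f:\omega \to \omega \;:\; \exists k \in \omega \,\forall n \in \omega, \; f(n) \leq 2^{kn^2}\,\}.
\]
This $\Oc$ is arithmetically definable and downward closed, and Fact~\ref{fact:Steitz} is precisely the hypothesis $(\ast)$ of Proposition~\ref{prop:main-meta-prop} for this $P$ and $\Oc$. Now, given the amorphous $A$ and the well-orderable family $\Cfr = (C_n)_{n \in \omega}$, let $T = \thy(\Cfr)$. By the discussion of the previous paragraph, $T$ satisfies $P$ in $L[T]$. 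Applying Proposition~\ref{prop:main-meta-prop} therefore yields that each $C_n$ is finite and that $(n \mapsto s_n(\Cfr)) \in \Oc$, i.e., there exists some $k$ such that $s_n(\Cfr) \leq 2^{kn^2}$ for all $n$.

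To finish, I would use the elementary observation (noted right after the definition of $s_n(\Cfr)$) that $|C_n| \leq 2^{s_n(\Cfr)}$, which combines with the bound above to give $|C_n| \leq 2^{2^{kn^2}}$, as desired.

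I do not anticipate any genuine obstacle: essentially every nontrivial ingredient has been isolated in the preceding machinery. The only subtle point worth double-checking is that the passage to $L[T]$ is harmless here, which is ensured by the arithmetical definability of $\Oc$: the statement ``there exists $k$ such that $\forall n, \, s_n(\Cfr) \leq 2^{kn^2}$'' is absolute between $L[T]$ and the ambient \ZF-universe $V$, since $s_n(\Cfr)$ is computed from $\Cfr$ (an element of $L[T]$) and the bound is arithmetical in that data.
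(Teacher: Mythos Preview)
Your proposal is correct and follows essentially the same route as the paper: invoke the Walczak--Typke correspondence to see that $A$ is nebulously $\omega$-categorical and (via strong minimality) $\omega$-stable, take $P$ = ``$\omega$-categorical and $\omega$-stable'' with $\Oc = \{f : \exists k\,\forall n\, f(n)\leq 2^{kn^2}\}$, apply Proposition~\ref{prop:main-meta-prop} using Fact~\ref{fact:Steitz} as hypothesis $(\ast)$, and finish with $|C_n|\leq 2^{s_n(\Cfr)}$. The only imprecision is the parenthetical remark that $\Cfr$ itself lies in $L[T]$---in general only $T=\thy(\Cfr)$ does---but this is immaterial since Proposition~\ref{prop:main-meta-prop} already handles the transfer via $s_n(\Cfr)\leq |S_n(T)|$.
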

\begin{proof}
  The set of functions $f : \omega \to \omega$ satisfying that for some $k \in \omega$, $f(n) \leq 2^{kn^2}$ for all $n \in \omega$ is clearly arithmetically definable. By Fact~\ref{fact:WalczakTypkeCorrespondence}, $A$ is nebulously $\omega$-categorical and $\omega$-stable. Taking `$\omega$-categorical and $\omega$-stable' as our property $P$, we can apply Proposition~\ref{prop:main-meta-prop} and Fact~\ref{fact:Steitz} to get that for any well-orderable family $\Cfr = (C_n)_{n \in \omega}$ as in the statement of the proposition, there is an $m \in \omega$ such that $s_n(\Cfr) \leq 2^{kn^2}$ for all $n \in \omega$. Applying the obvious bound on $|C_n|$ in terms of $s_n(\Cfr)$ gives that $|C_n| \leq 2^{2^{kn^2}}$ for all $n \in \omega$, as required.
\end{proof}

Proposition~\ref{prop:amorphous-case} also applies to MT-ranked sets (such as finite unions of amorphous sets), since MT-rankedness is equivalent to nebulous $\omega$-stability. In Section~\ref{sec:structure-theorem-amorphous}, we will give a much stronger structure theorem for amorphous sets, strengthening some of the results of Truss in \cite{Truss_1995}.

\subsection{Nebulous (monadic) stability}
\label{sec:nebulous-monadic-stability}

\begin{proposition}\label{prop:monadic-stability-char} 
  $(\ZFC)$ $T$ is monadically stable if and only if it is NSOP and does not admit tuple-coding.
\end{proposition}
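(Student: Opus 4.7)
The plan is to derive this from two ingredients already present in the excerpt: the characterization of monadic NIP via tuple-coding (attributed to \cite[Theorem 1.1]{braunfeld2021characterizations}) and Theorem~\ref{coding}, which says that a stable theory is monadically stable iff it does not admit (ordinary) coding iff it is monadically NIP. The third ingredient, not explicitly stated in the excerpt, is the Shelah dichotomy that an unstable theory has either the independence property or the strict order property; equivalently, NIP $+$ NSOP $\Rightarrow$ stable. I will take this as a known fact.

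For the forward direction, I would simply observe that if $T$ is monadically stable, then $T$ is stable and in particular NSOP. Moreover, monadic stability trivially implies monadic NIP (every unary expansion is stable, hence NIP), so by the tuple-coding characterization $T$ does not admit tuple-coding.

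For the reverse direction, I would first invoke the tuple-coding theorem to conclude that the non-admission of tuple-coding yields that $T$ is monadically NIP, and in particular NIP. Combined with the hypothesis that $T$ is NSOP, the Shelah dichotomy yields stability of $T$. Now $T$ is a stable, monadically NIP theory, so Theorem~\ref{coding} applies and gives that $T$ is monadically stable.

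There is essentially no obstacle; the content is purely a repackaging of known equivalences, with the only nonobvious ingredient being that NIP $+$ NSOP forces stability, which is a classical result of Shelah. If one wished to avoid citing that fact, one could instead argue directly that a monadically NIP theory which is unstable must admit coding (and hence tuple-coding) by using the order property on singletons after naming parameters, but this would be strictly more work than simply invoking the dichotomy.
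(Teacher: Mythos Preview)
Your proposal is correct and matches the paper's proof essentially line for line: forward direction via monadic stability $\Rightarrow$ stable $\Rightarrow$ NSOP together with monadic stability $\Rightarrow$ monadic NIP $\Rightarrow$ no tuple-coding, and reverse direction via no tuple-coding $\Rightarrow$ monadic NIP combined with Theorem~\ref{coding}. The only difference is that you make explicit the use of Shelah's NIP $+$ NSOP $\Rightarrow$ stable step needed before invoking Theorem~\ref{coding}, whereas the paper leaves this implicit in its terse ``follows by \cite[Thm.~1.1]{braunfeld2021characterizations} and Theorem~\ref{coding}.''
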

\begin{proof}
  If $T$ is monadically stable, then it is stable and therefore NSOP and moreover is monadically NIP and therefore does not admit tuple-coding by \cite[Thm.~1.1]{braunfeld2021characterizations}. The other direction follows by \cite[Thm.~1.1]{braunfeld2021characterizations} and Theorem~\ref{coding}.
\end{proof}

\begin{lemma}\label{lem:coding-finitary}
  $(\ZFC)$ A first-order theory $T$ admits tuple-coding if and only if there is a formula $\varphi(\xbar,\ybar,z)$ such that for every $m$, there are sequences $(\abar_i)_{i < m}$, $(\bbar_j)_{j < m}$, and $(c_{i,j})_{i,j < m}$ such that for all $i,j,k,\ell < m$, $\varphi(\abar_i,\bbar_i,c_{k,\ell})$ holds if and only if $i=k$ and $j = \ell$. An analogous statement is true for admitting coding.
\end{lemma}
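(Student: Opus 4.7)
The plan is to prove both implications by a compactness argument. The finitary grid condition is, morally, a disassembled version of Definition~\ref{def:coding}: the grids for each $m$ fit together into a single infinite configuration, while conversely the infinite configuration contains every finite grid. The analogous statement for coding on singletons follows by the identical argument with single variables in place of $\xbar, \ybar$.

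For the backward direction, I would expand the language of $T$ by fresh constants $\abar_i, \bbar_j, c_{i,j}$ (of appropriate arities) for $i, j \in \omega$, and consider the expanded theory
\[T^+ \;:=\; T \;\cup\; \{\varphi(\abar_i, \bbar_j, c_{k,\ell}) \leftrightarrow (i = k \wedge j = \ell) \,:\, i, j, k, \ell \in \omega\}.\]
Any finite subfragment of $T^+$ mentions only finitely many of the new constants, and the finitary hypothesis applied to a sufficiently large $m$ realizes it in a model of $T$. By compactness $T^+$ has a model $M$; the grid axioms force pairwise distinctness at each level, so the interpretations of the new constants form infinite sets $A, B, C \subseteq M$, and each $(\abar_i, \bbar_j) \in A \times B$ has $c_{i,j}$ as its unique $C$-witness under $\varphi$, verifying Definition~\ref{def:coding}.

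For the forward direction, given $\varphi, A, B, C$ witnessing Definition~\ref{def:coding}, let $f : A \times B \to C$ send each pair to its unique $C$-witness. For any $m$, I would pick distinct $\abar_0, \ldots, \abar_{m-1} \in A$ and distinct $\bbar_0, \ldots, \bbar_{m-1} \in B$ and set $c_{i,j} := f(\abar_i, \bbar_j)$, so that $\varphi(\abar_i, \bbar_j, c_{k,\ell})$ holds iff $c_{k,\ell} = c_{i,j}$. The finitary condition then reduces to the injectivity of $f$ on the subgrid $\{\abar_i\} \times \{\bbar_j\}$.

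The main obstacle is thus establishing this injectivity. It is the intended content of \emph{coding} --- different pairs should receive different codes --- and the standard reading of Definition~\ref{def:coding} includes injectivity of $f$ implicitly, in which case injectivity on any $m$-subgrid is automatic from the infinity of $A$ and $B$. If one reads Definition~\ref{def:coding} literally, one must instead pass to a sufficiently saturated elementary extension of $M$ (so that $A, B, C$ can be taken arbitrarily large) and extract an injective $m \times m$ subgrid via a Ramsey-style argument applied to mutually indiscernible sequences in $A$ and $B$; the infinity of $C$ guarantees that $f$ cannot collapse uniformly on arbitrarily large mutually indiscernible subsequences.
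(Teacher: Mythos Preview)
Your proof is correct and matches the paper's approach exactly: the paper's own proof is the single line ``This is immediate by compactness,'' and your proposal simply spells out that compactness argument in both directions.

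Your observation about injectivity in the forward direction is well-taken. Definition~\ref{def:coding} as literally written only asserts that each pair $(a,b)$ has a \emph{unique} witness in $C$, not that distinct pairs have distinct witnesses; and indeed, as you suspect, the paper is tacitly reading the definition in the standard (injective) sense. This is confirmed by the paper's own usage in the proof of Lemma~\ref{stable_not_mod}, where admitting coding is immediately unpacked as the existence of elements with $\phi(a_i,b_j,c_{k,\ell}) \Leftrightarrow (i,j)=(k,\ell)$. So your first resolution is the right one.

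Your second resolution, however, does not work as stated. The infinitude of $C$ places no constraint whatsoever on the range of the function $f$: take for instance $\phi(x,y,z) := (z = c_0)$ for a fixed constant $c_0$ in the theory of an infinite set with a constant. Then $f$ is globally constant, $C$ can be any infinite set containing $c_0$, and no amount of saturation or indiscernibility will produce an injective subgrid. This theory is monadically stable, so it genuinely does not admit coding in the intended sense, yet it satisfies the literal wording of Definition~\ref{def:coding}. So the Ramsey route cannot rescue the literal reading; you must take the charitable reading, which you correctly identify as the intended one.
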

\begin{proof}
  This is immediate by compactness.
\end{proof}

Recall that a binary relation $R$ on a set $A$ is \emph{well-founded} if there is a function $f : A \to \mathrm{Ord}$ such that for any $a,b \in A$, if $a \mathrel{R} b$, then $f(a) > f(b)$. $R$ is \emph{ill-founded} if it is not well-founded. Without dependent choice, it is not always true that a binary relation $R$ is ill-founded if and only if there is a sequence $(a_i)_{i \in \omega}$ satisfying $a_i \mathrel{R} a_{i+1}$ for all $i$, but we will use the following closely related fact.

\begin{fact}\label{fact:ill-founded-forcing}
  $(\ZF)$ A binary relation $R$ on a set $A$ is \emph{ill-founded} if and only if there is a forcing extension $V[G]$ of the universe containing a sequence $(a_i)_{i \in \omega}$ of elements of $A$ such that $a_i \mathrel{R} a_{i+1}$ for all $i \in \omega$.
\end{fact}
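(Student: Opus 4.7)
The plan is to establish the two directions separately. For $(\Leftarrow)$, which I would prove contrapositively, the point is that well-foundedness is preserved by forcing: a rank function $f : A \to \mathrm{Ord}$ witnessing well-foundedness in $V$ remains a function in any $V[G]$, and the condition ``$a \mathrel{R} b \Rightarrow f(a) > f(b)$'' is absolute between $V$ and $V[G]$. Any putative infinite $R$-chain $(a_i)_{i \in \omega} \in V[G]$ would then yield an infinite strictly decreasing sequence of ordinals $(f(a_i))_{i \in \omega}$, which is impossible in $V[G]$.

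For $(\Rightarrow)$, assume $R$ is ill-founded. The plan is to build an explicit forcing whose generic is an infinite $R$-chain. First I isolate the ``well-founded kernel'' $W \subseteq A$ of $R$ by the usual transfinite recursion: $W_0 = \varnothing$, $W_{\alpha+1} = \{a \in A : \forall b\,(a \mathrel{R} b \Rightarrow b \in W_\alpha)\}$, $W_\lambda = \bigcup_{\alpha < \lambda} W_\alpha$ at limits. By Replacement this increasing chain of subsets of $A$ stabilizes at some set $W$, and the map sending $a \in W$ to the least $\alpha$ with $a \in W_\alpha$ is a rank function on $W$. Ill-foundedness of $R$ on $A$ therefore forces $W \subsetneq A$. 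The key property to extract is that every $a \in A \setminus W$ has some $b \in A \setminus W$ with $a \mathrel{R} b$ --- otherwise $a$ would already belong to $W_{\alpha+1} \subseteq W$ for $\alpha$ the stabilization stage.

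Now I define the forcing $P$ to consist of the nonempty finite sequences $(a_0, \dots, a_n)$ of elements of $A \setminus W$ satisfying $a_i \mathrel{R} a_{i+1}$ for all $i < n$, ordered by end-extension (reverse inclusion). Nonemptiness of $A \setminus W$ provides at least one condition, and the key property above shows that every condition has a proper extension. Consequently the sets $D_k = \{s \in P : |s| \geq k\}$ are dense for each $k \in \omega$, and a standard density argument yields $\mathbf{1}_P \Vdash \exists f\,(f : \check\omega \to \check A \wedge \forall i\,f(i) \mathrel{\check R} f(i+1))$, which is the precise meaning of the assertion that some forcing extension of $V$ contains an infinite $R$-chain.

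The main obstacle is bookkeeping in \ZF\ rather than any conceptual difficulty. The recursion defining $W$ must go through using only Replacement (no form of choice), and the phrase ``forcing extension $V[G]$'' should be parsed either via the forcing relation $\Vdash$ on $V$ or via a Boolean-valued model, since proper-class-generic filters need not live in $V$. Once these conventions are fixed, both halves of the argument are routine.
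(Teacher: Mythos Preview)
Your argument is correct and is the standard proof of this fact. Note, however, that the paper states this as a \emph{Fact} without proof, treating it as folklore; there is no paper proof to compare against. Your construction of the well-founded kernel $W$ via transfinite recursion (using only Replacement), the observation that every element of $A \setminus W$ has an $R$-successor in $A \setminus W$, and the forcing with finite $R$-chains in $A \setminus W$ ordered by end-extension is exactly the expected route, and your remarks about interpreting ``forcing extension of $V$'' via the forcing relation in \ZF{} are apt.
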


We will use the following lemma several times.

\begin{lemma}\label{lem:omega-cat-ill-founded}
  $(\ZF)$ If $T$ is $\omega$-categorical and $(\abar_i)_{i \in \omega}$ is some sequence of finite tuples (of possibly different length) in the unique countable model of $T$, then for any $M \models T$, the set of tuples $(\bbar_0\dots,\bbar_{n-1}) \in (M^{<\omega})^{<\omega}$ satisfying $(\bbar_0,\dots,\bbar_{n-1}) \equiv (\abar_0,\dots,\abar_{n-1})$ is ill-founded under extension.
\end{lemma}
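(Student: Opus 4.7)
The plan is to apply Fact~\ref{fact:ill-founded-forcing}, exhibiting a forcing extension $V[G]$ in which the set in question contains an infinite chain of proper extensions. Let $P$ denote the collection of all $(\bbar_0,\dots,\bbar_{n-1}) \in (M^{<\omega})^{<\omega}$ with $|\bbar_i|=|\abar_i|$ for each $i<n$ and $(\bbar_0,\dots,\bbar_{n-1}) \equiv (\abar_0,\dots,\abar_{n-1})$. Partially order $P$ by $p \leq q$ iff $p$ extends $q$, and regard $(P,\leq)$ as a forcing notion.

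The key combinatorial step is to show that $P$ has no maximal elements: given any $p = (\bbar_0,\dots,\bbar_{n-1}) \in P$, there exists $\bbar_n \in M^{|\abar_n|}$ with $(\bbar_0,\dots,\bbar_n) \in P$. Since $T$ is $\omega$-categorical, $S_{n+1}(T)$ is finite, so the complete type $\tp(\abar_0,\dots,\abar_n)$ is isolated by some formula $\psi(\xbar_0,\dots,\xbar_n)$. The projected formula $\chi(\xbar_0,\dots,\xbar_{n-1}) := \exists \xbar_n\,\psi(\xbar_0,\dots,\xbar_n)$ lies in $\tp(\abar_0,\dots,\abar_{n-1})$, hence also in $\tp(\bbar_0,\dots,\bbar_{n-1})$, so there is $\bbar_n \in M^{|\abar_n|}$ with $M \models \psi(\bbar_0,\dots,\bbar_n)$. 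Since $\psi$ isolates the full type, $(\bbar_0,\dots,\bbar_n) \equiv (\abar_0,\dots,\abar_n)$, placing this strictly longer sequence in $P$.

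It follows that for each $k \in \omega$ the set $D_k := \{p \in P : \operatorname{length}(p) \geq k\}$ is dense in $(P,\leq)$. A $V$-generic filter $G$ must therefore meet every $D_k$, and since $G$ is directed, the conditions in $G$ cohere into a single sequence $(\bbar_i)_{i \in \omega}$ in $V[G]$ all of whose initial segments lie in $P$. This yields an infinite chain of proper extensions inside $P$ in $V[G]$, so by Fact~\ref{fact:ill-founded-forcing} the extension relation on $P$ is already ill-founded in $V$, as required.

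The principal obstacle is the lack of dependent choice in \ZF: one cannot simply inductively pick the $\bbar_n$ inside $V$, which is precisely why the argument is routed through forcing and Fact~\ref{fact:ill-founded-forcing}. A minor subtlety is whether ``$T$ is $\omega$-categorical'' should be interpreted in $V$ or in $L[T]$, but the only consequence used — namely finiteness of $S_n(T)$ and isolation of complete $n$-types over $\emptyset$ — is a syntactic/arithmetic property of $T$ and hence absolute between the two, so the reading does not affect the argument.
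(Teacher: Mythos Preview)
Your proof is correct, and the key combinatorial step---using $\omega$-categoricity to isolate the types $\tp(\abar_0,\dots,\abar_{n-1})$ and thereby show that every element of $P$ admits a proper extension---is exactly what the paper does. Where you diverge is in how you conclude ill-foundedness from this. You force with $P$ to obtain an infinite branch in $V[G]$ and then invoke Fact~\ref{fact:ill-founded-forcing}, whereas the paper simply observes that a tree with no terminal nodes cannot carry a rank function into the ordinals (the minimum value of such a function would be attained at a node with no proper extension, contradiction), and is therefore ill-founded directly by the paper's definition of well-foundedness. Your route through forcing is valid but unnecessary; the definitional observation works in \ZF{} with no appeal to dependent choice. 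In particular, your remark that ``one cannot simply inductively pick the $\bbar_n$ inside $V$'' is true but beside the point: ill-foundedness as defined in the paper does not require exhibiting an infinite chain in $V$, only showing that no rank function exists, and that follows immediately from the extension property you established.
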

\begin{proof}
  For each $n$, the type of $(\abar_0,\dots,\abar_{n-1})$ is isolated by a formula and $T$ moreover says that there exists a $\cbar$ such that $(\abar_0,\dots,\abar_{n-1},\cbar) \equiv (\abar_0,\dots,\abar_{n-1},\abar_n)$. Therefore this extension property holds in any model of $T$ and we have that the tree of tuples in the statement of the lemma is ill-founded in any $M \models T$.
\end{proof}

\begin{proposition}\label{prop:nebulously-not-tuple-coding}
  $(\ZF)$ For any set $A$, the following are equivalent.
  \begin{enumerate}
  \item\label{mon-NIP} $A$ is nebulously monadically NIP.
  \item\label{neb-tup-code} $A$ nebulously does not admit tuple-coding.
  \item\label{finitary-tup-code-1} For any $k \in \omega$ and $D \subseteq A^{2k+1}$, there is an $m$ such that for any $X,Y \subseteq A^k$ and $Z \subseteq A$, if $|X|,|Y| > m$, then $D \cap (X \times Y \times Z)$ is not the graph of a bijection $f : X \times Y \to Z$.
  \item\label{finitary-tup-code-2} For any $k \in \omega$ and $D \subseteq A^{2k+1}$, the binary relation $(X,Y,Z) < (X',Y',Z')$ defined by $X \subset X' \wedge Y \subset Y' \wedge Z \subset Z'$ on the set \( \{(X,Y,Z) \in \Pc(A^k)\times \Pc(A^k) \times \Pc(A) : D \cap (X \times Y\times Z)~\text{is the graph of a bijection}~f:X\times Y \to Z\} \) is well-founded.
  \end{enumerate}
\end{proposition}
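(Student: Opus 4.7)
My plan is to establish all four equivalences via the cycle $(1)\Rightarrow(2)\Rightarrow(3)\Rightarrow(4)\Rightarrow(1)$. At each stage I use that by Proposition~\ref{prop:nebulous-free-omega-cat} any of the four conditions implies $A$ is power-Dedekind-finite (since true arithmetic is admitted by any non-power-Dedekind-finite set and fails monadic NIP, admits tuple-coding, etc.), so by Fact~\ref{fact:WalczakTypkeCorrespondence} every countable theory admitted by $A$ is $\omega$-categorical in $V$ itself, not merely in $L[T]$.

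For $(1)\Leftrightarrow(2)$, I apply the $\ZFC$ theorem \cite[Thm.~1.1]{braunfeld2021characterizations} inside $L[T]$ for each admitted countable $T$; monadic NIP and non-admission of tuple-coding are equivalent in $\ZFC$, so the nebulous versions coincide. For $(2)\Leftrightarrow(3)$, I use Lemma~\ref{lem:coding-finitary} to make ``admits tuple-coding'' an arithmetic statement in $T$, hence absolute between $V$ and $L[T]$. Concretely, $\neg(3)$ witnessed by $D\subseteq A^{2k+1}$ gives that $T':=\thy(A,D)$ admits tuple-coding in $V$ via the predicate $R$ naming $D$, so also in $L[T']$, yielding $\neg(2)$; conversely, $\neg(2)$ gives by absoluteness a countable admitted $T$ admitting tuple-coding in $V$ via some $\varphi(\xbar,\ybar,z)$, and after folding parameters into new constants and padding so $|\xbar|=|\ybar|=k$, the relation $D:=\varphi^A$ witnesses $\neg(3)$. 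For $(3)\Rightarrow(4)$, the hypothesis forces $|X|,|Y|\leq m$ and hence $|Z|=|X|\cdot|Y|\leq m^2$, so $f(X,Y,Z):=(2m+m^2)-(|X|+|Y|+|Z|)$ is an $\omega$-valued rank function for $<$.

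The main work is in $(4)\Rightarrow(1)$, which I prove contrapositively. From $\neg(1)$, the $\ZFC$ theorem applied in $L[T]$ combined with arithmetic absoluteness yield a countable admitted $T$ admitting tuple-coding in $V$ via some $\varphi(\xbar,\ybar,z)$; let $D:=\varphi^A$. I will build a forcing extension $V[G]$ containing an infinite strictly $<$-ascending chain of $V$-triples from the set appearing in~(4), which by Fact~\ref{fact:ill-founded-forcing} gives $\neg(4)$. Call a finite coding witness $(X,Y,Z)$ in $A$ \emph{$\omega$-extendable} if its type over $\emptyset$ is the restriction of the type of an infinite tuple-coding witness (a consistent type by compactness applied to the finitary witnesses from Lemma~\ref{lem:coding-finitary}). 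Using $\omega$-categoricity of $T$ in $V$, this type is isolated, hence realized in $A$; moreover, given any $\omega$-extendable $n$-witness $t\in A$, the type over $t$ specifying an extension to an $\omega$-extendable $(n+1)$-witness is an isolated type in the $\omega$-categorical theory $\thy(A,t)$, hence realized in $A$. Thus every $\omega$-extendable witness has a proper $<$-extension in the set from~(4). Letting $\mathbb{P}$ be the forcing of finite $<$-ascending chains of $\omega$-extendable witnesses ordered by reverse extension, the extension property makes every condition have a proper strengthening, so a generic filter produces the desired infinite $<$-ascending sequence in $V[G]$.

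The hard part will be the extension property for $\omega$-extendable witnesses: it requires $T$ to be $\omega$-categorical \emph{in $V$}, not merely in $L[T]$, so that isolated types over finite parameter sets from $A$ are actually realized back in $A$. This is precisely what the $\ZF$ version of the Walczak-Typke correspondence (Fact~\ref{fact:WalczakTypkeCorrespondence}) provides and is what allows the generic chain to consist of genuine $V$-objects, as demanded by Fact~\ref{fact:ill-founded-forcing}.
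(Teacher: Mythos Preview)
Your overall architecture matches the paper's: (1)$\Leftrightarrow$(2) via \cite[Thm.~1.1]{braunfeld2021characterizations} inside $L[T]$, (2)$\Leftrightarrow$(3) via the arithmetic absoluteness of the finitary condition in Lemma~\ref{lem:coding-finitary}, and the hard direction $\neg(2)\Rightarrow\neg(4)$ by producing, in $A$ itself, arbitrarily extendable finite coding configurations using $\omega$-categoricity. The paper phrases this last step slightly differently---it defines ``good types'' in $L[T]$, shows that poset is ill-founded there, transfers ill-foundedness to $V$ by absoluteness, and only then uses $\omega$-categoricity (via Lemma~\ref{lem:omega-cat-ill-founded}) to realize good types in $A$---but your ``$\omega$-extendable witnesses'' are exactly the realizations of good types, so the content is the same. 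Your forcing poset $\mathbb{P}$ is harmless but unnecessary: once you know every $\omega$-extendable witness has a proper $\omega$-extendable extension, the relation is ill-founded directly in $V$ (a rank function would have to attain its minimum, contradicting extendability), so Fact~\ref{fact:ill-founded-forcing} is not needed.

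There is one genuine slip, in your $(3)\Rightarrow(4)$. Condition~(3) does \emph{not} force $|X|,|Y|\leq m$; its contrapositive only gives $|X|\leq m$ \emph{or} $|Y|\leq m$. So triples $(X,Y,Z)\in S$ with, say, $X$ infinite and $|Y|\leq m$ are not excluded, and your proposed rank $(2m+m^2)-(|X|+|Y|+|Z|)$ need not even be finite. A correct rank is $f(X,Y,Z)=2(m{+}1)-\min(|X|,m{+}1)-\min(|Y|,m{+}1)$: along $(X,Y,Z)<(X',Y',Z')$ both summands are nondecreasing, and if neither increased strictly we would have $|X|,|Y|>m$, contradicting $(3)$. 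Separately, your blanket claim that each of the four conditions implies power-Dedekind-finiteness is only justified by Proposition~\ref{prop:nebulous-free-omega-cat} for (1) and (2); for (4) this needs its own short argument (e.g.\ from a surjection $f:A\to\omega$ and a pairing function one builds $D\subseteq A^3$ for which every finite coding witness extends), since otherwise your appeal to $\omega$-categoricity of $T$ in $V$ during $\neg(1)\Rightarrow\neg(4)$ is unsupported in the non-power-Dedekind-finite case. (The paper's proof tacitly makes the same assumption at the sentence ``Since $T$ is $\omega$-categorical''.)
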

\begin{proof}
  The fact that (\ref{mon-NIP}) and (\ref{neb-tup-code}) are equivalent follows from \cite[Thm.~1.1]{braunfeld2021characterizations}.

  To see that (\ref{neb-tup-code}) implies (\ref{finitary-tup-code-1}), assume that $A$ nebulously does not admit tuple-coding. Fix $D \subseteq A^{2k+1}$ and let $\Afr$ be the structure with underlying set $A$ and a single $(2k+1)$-ary relation $Q$ interpreted as $D$. Let $T = \thy(\Afr)$. Since $T$ does not admit tuple-coding (in $L[T]$), we have by Lemma~\ref{lem:coding-finitary} that there is an $m$ such that there does not exist $(\abar_i)_{i < m}$, $(\bbar_j)_{j < m}$, and $(c_{ij})_{i,j < m}$ in any model of $T$ such that $Q(\abar_i,\bbar_j,c_{j,\ell})$ holds if and only if $i=k$ and $j = \ell$. This is expressible as a first-order sentence, so it must hold in $\Afr$ as well. It is immediate that this is equivalent to (\ref{finitary-tup-code-1}).

  It's clear that (\ref{finitary-tup-code-1}) implies (\ref{finitary-tup-code-2}), so we just need to show that (\ref{finitary-tup-code-2}) implies (\ref{neb-tup-code}). We will prove the contrapositive. Assume that (\ref{neb-tup-code}) fails. Fix a structure $\Afr$ with countable language witnessing this. Let $T = \thy(\Afr)$. In $L[T]$, we can find a formula $\varphi(\xbar,\ybar,z)$ and infinite $X,Y \subseteq M^k$ and $Z \subseteq M$ witnessing that $T$ admits tuple-coding. Call an $(k\cdot \ell+k\cdot \ell+\ell^2)$-type $p((\xbar_i)_{i < \ell},(\ybar_j)_{j < \ell},(z_{i,j})_{i,j< \ell})$ \emph{good} if for any $(\abar_i)_{i < \ell}$, $(\bbar_j)_{j < \ell}$, $(c_{i,j})_{i,j<\ell}$ realizing $p$,
  \begin{itemize}
  \item $\varphi(\abar_i,\bbar_j,c_{i',j'})$ holds if and only if $i = i'$ and $j = j'$ and
  \item there is an extension of $(\abar_i)_{i < \ell}$, $(\bbar_j)_{j < \ell}$, $(c_{i,j})_{i,j<\ell}$ to an infinite witness of tuple-coding.
  \end{itemize}
  Given good types $p((\xbar_i)_{i < \ell},(\ybar_j)_{j < \ell},(z_{i,j})_{i,j< \ell})$ and $q((\xbar_i)_{i < \ell'},(\ybar_j)_{j < \ell'},(z_{i,j})_{i,j< \ell'})$ with $\ell < \ell'$, say that $q$ is a \emph{good extension} of $p$ if the restriction of $q$ to the variables $(\xbar_i)_{i < \ell},(\ybar_j)_{j < \ell},(z_{i,j})_{i,j< \ell}$ is $p$. Clearly we have that any good type admits a good extension and so the collection of good types strictly ordered under good extension is ill-founded in $L[T]$. By absoluteness of well-foundedness, this implies that the same holds in $V$ as well. Since $T$ is $\omega$-categorical, we have by Lemma~\ref{lem:omega-cat-ill-founded} that the collection of triples $(X,Y,Z)$ of finite sets with $X \subseteq A^k$, $Y \subseteq A^k$, and $Z \subseteq A$ such that the type of $(X,Y,Z)$ is good witnesses that the binary relation in (\ref{finitary-tup-code-2}) is ill-founded.
\end{proof}

By essentially the same proof, we get the following.

\begin{proposition}\label{prop:nebulously-not-coding}
  $(\ZF)$ For any set $A$, the following are equivalent.
  \begin{enumerate}
  \item\label{neb-code} $A$ nebulously does not admit coding.
  \item\label{finitary-code-1} For any $D \subseteq A^{3}$, there is an $m$ such that for any $X,Y,Z \subseteq A$, if $|X|,|Y| > m$, then $D \cap (X \times Y \times Z)$ is not the graph of a bijection $f : X \times Y \to Z$.
  \item\label{finitary-code-2} For any $D \subseteq A^{3}$, the binary relation $(X,Y,Z) < (X',Y',Z')$ defined by $X \subset X' \wedge Y \subset Y' \wedge Z \subset Z'$ on the set \( \{(X,Y,Z) \in \Pc(A)^3 : D \cap (X \times Y\times Z)~\text{is the graph of a bijection}~f:X\times Y \to Z\} \) is well-founded. \qed
  \end{enumerate}
\end{proposition}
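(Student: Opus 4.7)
The plan is to mirror the three-step cycle in the proof of Proposition~\ref{prop:nebulously-not-tuple-coding} verbatim, with the only change being that the tuple variables $\xbar,\ybar$ collapse to singleton variables $x,y$, so that the ambient relation $D$ is ternary rather than $(2k+1)$-ary. The singleton version of Lemma~\ref{lem:coding-finitary} (explicitly noted there as an analogous statement) plays the role that the tuple version played before.

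For $(\ref{neb-code})\Rightarrow(\ref{finitary-code-1})$: given $D\subseteq A^3$, form the structure $\Afr=(A;Q)$ with $Q$ interpreted as $D$, and let $T=\thy(\Afr)$, which is countable because the language is finite. By the nebulous hypothesis applied to $T$, $L[T]$ satisfies that $T$ does not admit coding, and Lemma~\ref{lem:coding-finitary} then supplies an $m$ such that no sequences $(a_i)_{i<m}$, $(b_j)_{j<m}$, $(c_{i,j})_{i,j<m}$ in any model of $T$ satisfy $Q(a_i,b_j,c_{k,\ell}) \Leftrightarrow (i=k \wedge j=\ell)$. The nonexistence of such witnesses is first-order expressible, so it holds in $\Afr$ itself, which is exactly (\ref{finitary-code-1}). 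The implication $(\ref{finitary-code-1})\Rightarrow(\ref{finitary-code-2})$ is immediate, since any infinite strictly increasing chain under the relation in (\ref{finitary-code-2}) would produce arbitrarily large triples $(X,Y,Z)$ violating (\ref{finitary-code-1}).

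For $(\ref{finitary-code-2})\Rightarrow(\ref{neb-code})$ I argue contrapositively. Suppose $A$ nebulously admits coding; fix a countable-language structure $\Afr$ on $A$ such that $T=\thy(\Afr)$ admits coding in $L[T]$, and fix inside $L[T]$ a formula $\varphi(x,y,z)$ with parameters together with infinite sets $X,Y,Z$ in some model of $T$ witnessing coding. Following the previous proof, I call an $(\ell+\ell+\ell^2)$-type $p((x_i)_{i<\ell},(y_j)_{j<\ell},(z_{i,j})_{i,j<\ell})$ \emph{good} if every realization $(a_i),(b_j),(c_{i,j})$ satisfies the coding pattern $\varphi(a_i,b_j,c_{k,\ell'}) \Leftrightarrow (i=k \wedge j=\ell')$ and extends to an infinite coding witness, and I call a longer type $q$ a good extension of $p$ if restricting $q$ to $p$'s variables yields $p$. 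Each good type admits a good extension, so good types are ill-founded under extension in $L[T]$; by Fact~\ref{fact:ill-founded-forcing} this ill-foundedness is absolute between $L[T]$ and $V$. Applying Lemma~\ref{lem:omega-cat-ill-founded} to $\Afr$ then produces in $V$ a strictly increasing sequence of finite triples $(X,Y,Z)\in\Pc(A)^3$ each realizing a good type, which witnesses that the binary relation in (\ref{finitary-code-2}) is ill-founded.

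The main technical point, as in Proposition~\ref{prop:nebulously-not-tuple-coding}, is choosing the notion of \emph{good} type so that goodness is preserved upward under extension and is captured inside $L[T]$: this is what allows absoluteness of well-foundedness to transport the infinite tower of configurations from $L[T]$ back into $V$. No new combinatorial input is needed beyond the singleton-variable restatement; everything else is notational.
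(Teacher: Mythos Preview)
Your proposal is correct and takes essentially the same approach as the paper, which simply states ``By essentially the same proof, we get the following'' and marks the proposition with a \qed. Your writeup faithfully specializes the proof of Proposition~\ref{prop:nebulously-not-tuple-coding} to singleton variables, exactly as intended. One small quibble: the transfer of ill-foundedness from $L[T]$ to $V$ is the standard absoluteness of well-foundedness between transitive models (as the paper phrases it), not Fact~\ref{fact:ill-founded-forcing}, which concerns forcing extensions; since $L[T]\models\ZFC$, an actual descending $\omega$-chain exists in $L[T]\subseteq V$, so the conclusion holds regardless.
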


We can give a particularly simple characterization of nebulous stability using the following result of Simon.

\begin{fact}[{\cite{Simon2021}}]\label{fact:Simon}
  A theory $T$ is stable if and only if every formula $\varphi(x,y)$ (with parameters) in two variables is stable.
\end{fact}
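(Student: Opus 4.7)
The direction ``$T$ stable $\Rightarrow$ every 2-variable formula stable'' is immediate, since any single formula of a stable theory is a stable formula and restricting the variables (while allowing parameters) only yields instances of the same formula. The content lies in the converse, which I would attack by contrapositive: assuming $T$ has the order property witnessed by some formula $\varphi(\bar x;\bar y)$ on tuples of arbitrary length, produce a 2-variable formula $\psi(x;y)$ (with $|x|=|y|=1$, allowing parameters) that also has the order property.

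Start from witnesses $(\bar a_i)_{i<\omega}, (\bar b_j)_{j<\omega}$ with $\varphi(\bar a_i;\bar b_j) \Leftrightarrow i < j$. By Ramsey--Erd\H{o}s--Rado and compactness, pass to a mutually indiscernible witnessing pair, so that the behavior of $\varphi$ on indices is governed entirely by the order-type. The core step is an arity reduction: show that if $\varphi(\bar x;\bar y)$ has the order property with $|\bar x| > 1$, then for some coordinate $k$ and some choice of parameters $\bar c$ drawn from other coordinates of the indiscernible $\bar a$-sequence, the formula
\[
\varphi'(x_k;\bar y) \;\equiv\; \varphi(c_1,\ldots,c_{k-1},x_k,c_{k+1},\ldots,c_n;\bar y)
\]
still has the order property along some infinite subsequence. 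Iterating the reduction on both sides brings $\bar x$ and $\bar y$ down to singletons, yielding the desired 2-variable formula $\psi$.

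The main obstacle is the arity reduction itself: a naive substitution of parameters can easily collapse the order property, and the heart of Simon's argument is to force the existence of a good substitution. The approach I would follow is to pass to an $\aleph_1$-saturated elementary extension and use finitely satisfiable (coheir) extensions of types, so that the tree of partial substitutions that preserve the order property on a long initial segment is an ill-founded object; one then extracts an infinite branch through this tree. Alternatively, and in keeping with the absoluteness-based style elsewhere in this paper (cf.\ Fact~\ref{fact:ill-founded-forcing}), the branch extraction can be phrased via absoluteness of well-foundedness between the saturated extension and the ground model. Once the reduction is carried out on both sides, the resulting two-variable formula inherits the order property, contradicting the hypothesis and completing the contrapositive.
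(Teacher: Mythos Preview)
The paper does not prove this statement: it is recorded as a Fact with a citation to \cite{Simon2021} and invoked as a black box (in the proofs of Propositions~\ref{prop:nebulous-stability-char} and~\ref{prop:NIP-sMf-char}). There is therefore no argument in the paper to compare your proposal against.

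On your sketch itself: the overall shape---reduce the arity of the witnessing formula one coordinate at a time---is the natural plan, and you correctly flag the reduction step as the crux. Two comments. First, a minor slip: a pair of sequences witnessing the order property cannot be \emph{mutually} indiscernible in the usual sense (each indiscernible over the union of the other), since that would force $\varphi(\bar a_i;\bar b_j)$ to be constant in $j$; what you want is a single indiscernible sequence of concatenated tuples $(\bar a_i\bar b_i)_i$. Second, and more substantively, the mechanism you propose for the reduction (``coheir extensions \dots tree of partial substitutions \dots ill-founded \dots extract a branch'') is too vague to evaluate---it is not clear what tree is meant, why coheirs produce it, or why a branch yields a one-variable formula with the order property. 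Simon's published argument is short and direct: working with an indiscernible sequence of $n$-tuples and a single element that cuts it, he shows that some single coordinate of the tuple (over the others as parameters) already detects the cut; it does not go through saturated extensions or well-foundedness. If you intend to include a proof rather than a citation, that step needs to be made precise.
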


\begin{proposition}\label{prop:nebulous-stability-char}
  $(\ZF)$ For any set $A$, the following are equivalent.
  \begin{enumerate}
  \item\label{not-neb-stab} $A$ is not nebulously stable.
  \item\label{not-neb-stab-every-n} There is a $D \subseteq A^2$ such that for any $n \in \omega$, there are $(a_i)_{i < n}$ and $(b_j)_{j < n}$ in $A$ such that $(a_i,b_j) \in D$ if and only if $i < j$.
  \item\label{not-neb-stab-ill-founded} There is a $D \subseteq A^2$ such that the set of sequences $(a_0,b_0,\dots,a_{n-1},b_{n-1}) \in (A^2)^{<\omega}$ satisfying that $(a_i,b_j) \in D$ if and only if $i < j$ is ill-founded under extension.
  \item\label{not-neb-stab-ill-founded-Q} There is a $D \subseteq A^2$ such that the set of pairs of finite partial functions $f,g : {\subseteq}\mathbb{Q} \to A^2$ with the same domain satisfying that for all $i,j \in \mathrm{dom}(f)$, $(f(i),g(j))$ is ill-founded under extension.
  \end{enumerate}
\end{proposition}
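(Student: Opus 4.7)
My plan is to prove $(2) \Leftrightarrow (1)$ directly, then deduce $(3) \Rightarrow (2)$ and $(4) \Rightarrow (2)$ from the observation that ill-founded trees have non-empty levels at every finite height, and finally establish $(1) \Rightarrow (3)$ and $(1) \Rightarrow (4)$ by parallel case analyses on whether $A$ is power-Dedekind-finite.

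For $(2) \Rightarrow (1)$, take the contrapositive. Given $D \subseteq A^2$, let $T = \thy(A, D)$ in the language with one binary predicate. If $A$ is nebulously stable, then $L[T]$ models that $T$ is stable, so $D(x, y)$ does not have the order property, and by compactness in $L[T]$ there is a specific $n \in \omega$ such that $T$ proves the length-$n$ order property for $D$ fails. Provability is absolute, so $A$ satisfies this, failing $(2)$. The implications $(3) \Rightarrow (2)$ and $(4) \Rightarrow (2)$ are immediate, since an ill-founded tree cannot have an empty level (otherwise it would be a finite tree, well-founded by its height), and any node at level $n$ produces length-$n$ order-property witnesses.

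For $(1) \Rightarrow (3)$ and $(1) \Rightarrow (4)$, suppose first that $A$ is not power-Dedekind-finite. For $(4)$, compose a surjection $A \to \omega$ with a bijection $\omega \to \mathbb{Q}$ to get $h : A \to \mathbb{Q}$ and set $D = \{(a, b) \in A^2 : h(a) < h(b)\}$; force with finite partial sections of $h$ to add a generic total section $F : \mathbb{Q} \to A$ in $V[G]$, so after enumerating $\mathbb{Q}$, the chain of restrictions of $(F, F)$ is an infinite branch of the $(4)$-tree, and Fact~\ref{fact:ill-founded-forcing} gives ill-foundedness in $V$. The argument for $(3)$ is analogous using a surjection $h : A \to \omega$ directly: with a generic total section $F : \omega \to A$, the assignment $a_i = F(2i + 1)$, $b_i = F(2i)$ generates an infinite branch.

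If $A$ is power-Dedekind-finite, then by Corollary~\ref{cor:Walczak-Typke-rephrase} $A$ is nebulously $\omega$-categorical. Pick a countable $T$ admitted by $A$ that is unstable in $L[T]$; then $T$ is also $\omega$-categorical in $L[T]$. Working in $L[T]$ (a model of \ZFC), Fact~\ref{fact:Simon} produces an unstable 2-variable formula $\varphi(x, y, \bar c)$ with parameters $\bar c$ in the countable model of $T$, and a Ramsey extraction gives an indiscernible $(a_r, b_r)_{r \in I}$ (with $I = \omega$ for $(3)$, $I = \mathbb{Q}$ for $(4)$) satisfying $\varphi(a_r, b_s, \bar c) \iff r < s$. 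Since $T$ is $\omega$-categorical, $\tp(\bar c)$ over $\emptyset$ is isolated by some formula $\chi(\bar z)$, so we may pick any realization $\bar c_0 \in A^{|\bar z|}$ of $\chi$ and set $D = \{(a, b) : \varphi(a, b, \bar c_0)\}$. Enumerating $I$ as $\{i_n : n \in \omega\}$ and applying Lemma~\ref{lem:omega-cat-ill-founded} to the sequence $\bar a_0 = \bar c,\ \bar a_{n+1} = (a_{i_n}, b_{i_n})$---restricted to the subtree whose first entry is the fixed $\bar c_0 \in A$, whose extension property is preserved by the isolation of the relevant types in $T$---yields the required ill-foundedness in $A$. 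The main obstacle throughout this case is parameter absorption: the witnesses in $A$ of the failure of nebulous stability a priori depend on $n$ through varying parameters $\bar c^n$, and $\omega$-categoricity is precisely what lets us replace these with a single $\bar c_0 \in A$ via the isolation formula $\chi$.
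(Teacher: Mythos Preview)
Your proof is correct and follows essentially the same route as the paper: the easy implications $(4)\Rightarrow(3)\Rightarrow(2)\Rightarrow(1)$, then $(1)\Rightarrow(3),(4)$ by the same case split on power-Dedekind-finiteness, invoking Fact~\ref{fact:Simon} and Lemma~\ref{lem:omega-cat-ill-founded} in the $\omega$-categorical case. The only notable differences are cosmetic: you argue the non--power-Dedekind-finite case via forcing a section and Fact~\ref{fact:ill-founded-forcing}, whereas the paper simply observes that the tree for $(3)$ is ill-founded directly (every node extends, since the fibres of the surjection are non-empty); and your Ramsey extraction to indiscernibles is unnecessary, since compactness in $L[T]$ already gives a $\mathbb{Q}$-indexed instance of the order property for $\varphi(x,y,\bar c)$.
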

\begin{proof}
  To show that (\ref{not-neb-stab}) implies (\ref{not-neb-stab-ill-founded}), 
  first assume that $A$ is not power-Dedekind-finite. Fix a surjection $f : A \to \omega$ and let $D = \{(x,y) \in A^2 : f(x) < f(y)\}$. This clearly gives (\ref{not-neb-stab-ill-founded}). Now assume that $A$ is power-Dedekind-finite but not nebulously stable. Let $T$ be countable unstable theory admitted by $A$, and let $\Afr$ be a model of $T$ with underlying set $A$. By Fact~\ref{fact:WalczakTypkeCorrespondence}, $T$ is $\omega$-categorical. We can now apply Fact~\ref{fact:Simon} inside $L[T]$ to find a model $\Bfr \models T$ and an unstable $\Bfr$-formula $\varphi(x,y,\cbar)$. Let $(a_i,b_i)_{i \in \mathbb{Q}}$ be an instance of the order property for $\varphi(x,y,\cbar)$. Let $(X_i)_{i \in \omega}$ be a sequence of finite subsets of $\mathbb{Q}$ satisfying that $\bigcup_{i \in \omega} X_i = \mathbb{Q}$. We can now apply Lemma~\ref{lem:omega-cat-ill-founded} to the sequence $\cbar,(a_i,b_i)_{i\in X_0},(a_i,b_i)_{i \in X_1},(a_i,b_i)_{i \in X_2},\dots$ to get the required ill-foundedness in $A$ using the set $\{(x,y) \in A^2 : \Afr \models \varphi(x,y,\dbar)\}$ for some $\dbar \in A$ with $\dbar \equiv \cbar$. Therefore (\ref{not-neb-stab}) implies (\ref{not-neb-stab-ill-founded-Q}).

(\ref{not-neb-stab-ill-founded-Q}) clearly implies (\ref{not-neb-stab-ill-founded}), and (\ref{not-neb-stab-ill-founded}) clearly implies (\ref{not-neb-stab-every-n}). Finally, (\ref{not-neb-stab-every-n}) implies that $A$ supports an unstable theory and so is not nebulously stable.
\end{proof}

\begin{proposition}\label{prop:neb-mon-stab-rad-conv}
  $(\ZF)$ If $A$ is nebulously monadically stable, then for any well-orderable family $\Cfr = (C_n)_{n \in \omega}$ with $C_n \subseteq \Pc(A^{(n)})$ for each  $n$, $C_n$ is finite and for every real $r > 0$, $|C_n|$ is eventually dominated by $2^{n!r^n}$.
\end{proposition}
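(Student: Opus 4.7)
The plan is to invoke Proposition~\ref{prop:main-meta-prop} in its $S_n^\ast/\ell_n$ formulation, taking $P$ to be the property ``$\omega$-categorical and monadically stable'' and
\[
\Oc = \{f : \omega \to \omega : \forall r \in \mathbb{Q}_{>0}\ \exists N\ \forall n \geq N,\ f(n) \leq n!\, r^n\}.
\]
This $\Oc$ is visibly downwards closed and arithmetically definable (restricting to rational $r$ is harmless by monotonicity of the bound in $r$), so the meta-proposition applies in principle.

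For the hypothesis $(\ast)$, I would observe that if $T$ is a countable $\omega$-categorical monadically stable theory, then Theorem~\ref{ms_slow} gives $\oi_n(T) < n!/c^n$ for sufficiently large $n$, for every $c > 0$. Since $\oi_n(T) = |S_n^\ast(T)|$ by Ryll--Nardzewski (applied inside the relevant ZFC model $L[T]$), taking $c = 1/r$ places $(n \mapsto |S_n^\ast(T)|)$ in $\Oc$, as required by Proposition~\ref{prop:main-meta-prop}.

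Next I would verify that $\thy(\Cfr)$ satisfies $P$ in $L[\thy(\Cfr)]$. The monadic-stability half is immediate from the hypothesis that $A$ is nebulously monadically stable. For the $\omega$-categoricity half, I would appeal to Proposition~\ref{prop:nebulous-free-omega-cat}: monadic stability is preserved under one-dimensional interpretation (any unary expansion of the interpreted theory pulls back, via the defining equivalence relation, to a unary expansion of the interpreting theory, and stability transfers under interpretation), and $\thy(\omega,+,\times)$ is evidently not monadically stable, not being stable. Hence nebulous monadic stability of $A$ forces $A$ to be power-Dedekind-finite, and Corollary~\ref{cor:Walczak-Typke-rephrase} then gives that $A$ is nebulously $\omega$-categorical; in particular $\thy(\Cfr)$ is $\omega$-categorical in $L[\thy(\Cfr)]$.

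Proposition~\ref{prop:main-meta-prop} then yields that each $C_n$ is finite and $(n \mapsto \ell_n(\Cfr)) \in \Oc$, i.e., for every $r > 0$, $\ell_n(\Cfr) \leq n!\, r^n$ for all sufficiently large $n$. Combining with the elementary bound $|C_n| \leq 2^{\ell_n(\Cfr)}$ yields $|C_n| \leq 2^{n!\, r^n}$ eventually, which is the claimed estimate. The only substantive sub-step is confirming preservation of monadic stability under one-dimensional interpretation so that Proposition~\ref{prop:nebulous-free-omega-cat} may be invoked; everything else is routine assembly of Theorem~\ref{ms_slow} with the absoluteness machinery already packaged in Proposition~\ref{prop:main-meta-prop}.
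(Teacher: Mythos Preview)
Your proof is correct and follows essentially the same approach as the paper: invoke Proposition~\ref{prop:nebulous-free-omega-cat} to obtain power-Dedekind-finiteness (hence nebulous $\omega$-categoricity), then apply Proposition~\ref{prop:main-meta-prop} in its $S_n^\ast/\ell_n$ form with Theorem~\ref{ms_slow} supplying the bound, and finish via $|C_n| \leq 2^{\ell_n(\Cfr)}$. You are in fact slightly more careful than the paper, which glosses over the precise choice of $\Oc$ and the final passage from $\ell_n(\Cfr)$ to $|C_n|$; your explicit isolation of the preservation of monadic stability under one-dimensional interpretation as the one substantive check is also apt.
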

\begin{proof}
  Since $A$ is nebulously monadically stable, it is power-Dedekind-finite by Proposition~\ref{prop:nebulous-free-omega-cat}. Therefore by Proposition~\ref{lem:well-orderable-implies-countable}, we have that each $C_n$ is finite. Let $T = \thy(\Cfr)$. $L[T]$ satisfies that for every $r > 0$, $|\ell_n(T)|$ is eventually dominated by $2^{n!r^n}$ by Theorem~\ref{ms_slow} (replacing $c$ with $r^{-1}$). The statement `for every real $r > 0$, $f(n)$ is eventually dominated by $2^{n!r^n}$' is equivalent to the analogous statement quantifying over rational $r$ and is therefore arithmetically definable. It is also clearly downwards closed among functions $f : \omega \to \omega$, so we can apply Proposition~\ref{prop:main-meta-prop} to get that for every real $r > 0$, $|C_n|$ is eventually dominated by $2^{n!r^n}$. 
\end{proof}

\begin{theorem}\label{thm:mon-stable-ZF}
  $(\ZF)$ For any set $A$ and family $(C_n)_{n \in \omega}$ with $C_n \subseteq \Pc(A^{(n)})$, if $\bigcup_{n \in \omega} C_n$ is well-orderable and there is a real $r > 0$ such that $|C_n| > 2^{n!r^n}$ for infinitely many $n \in \omega$, then either
  \begin{itemize}
  \item there is a $D \subseteq A^2$ such that in some forcing extension there are functions $f,g : \mathbb{Q} \to A$ such that for any $i,j \in \mathbb{Q}$, $(f(i),g(j)) \in D$ if and only if $i < j$ or
  \item there is a $D \subseteq A^3$ such that in some forcing extension there are infinite sets $X,Y,Z \subseteq A$ such that $D\cap (X\times Y\times Z)$ is the graph of a bijection $h : X\times Y \to Z$.
  \end{itemize}
\end{theorem}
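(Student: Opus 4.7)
The plan is to exploit the contrapositive of Proposition~\ref{prop:neb-mon-stab-rad-conv} and case-split. Setting $T := \thy(\Cfr)$, the growth bound in Proposition~\ref{prop:neb-mon-stab-rad-conv} rests on (a) $A$ being power-Dedekind-finite (ensuring $T$ is countable via Lemma~\ref{lem:well-orderable-implies-countable}) and (b) $T$ being monadically stable in $L[T]$ (feeding into Theorem~\ref{ms_slow} inside $L[T]$). So the failure of the bound assumed in the hypothesis forces either (i) $A$ is not power-Dedekind-finite, or (ii) $A$ is power-Dedekind-finite and $T$ is not monadically stable in $L[T]$. In case (ii), Corollary~\ref{cor:Walczak-Typke-rephrase} gives that $T$ is $\omega$-categorical in $L[T]$, so applying Theorem~\ref{coding} inside $L[T]$ splits (ii) further into (ii-a) $T$ is unstable, or (ii-b) $T$ is stable and admits coding.

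Cases (i) and (ii-a) will both yield the first conclusion via a uniform forcing argument. In case (i), I will compose a surjection $A \twoheadrightarrow \omega$ with a bijection $\omega \leftrightarrow \mathbb{Q}$ to obtain $\pi : A \twoheadrightarrow \mathbb{Q}$, and set $D = \{(a,b) : \pi(a) <_\mathbb{Q} \pi(b)\}$; in case (ii-a), I will take $D$ to be the interpretation in $A$ of an unstable two-variable formula $\varphi(x,y;\bar{c})$ for $T$ (produced by Fact~\ref{fact:Simon} inside $L[T]$), with $\bar{c} \in A$ realizing the parameter type via $\omega$-categoricity. Then I force with the poset $\mathbb{P}$ whose conditions are pairs $(f,g)$ of finite partial functions $\mathbb{Q} \rightharpoonup A$ with a common finite domain satisfying $(f(i), g(j)) \in D \iff i < j$, ordered by extension. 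The key density claim is that for every condition $(f,g)$ and every $q \in \mathbb{Q}$ there is an extension with $q$ in its domain: in case (i) by surjectivity of $\pi$ (pick any $\pi$-preimage of $q$), and in case (ii-a) by Lemma~\ref{lem:omega-cat-ill-founded} applied to a $\mathbb{Q}$-indexed order-property sequence for $\varphi$ realized in the countable model of $T$ inside $L[T]$. A generic filter then assembles to total functions $f, g : \mathbb{Q} \to A$ in the forcing extension with the required order property.

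Case (ii-b) yields the second conclusion. Because $T$ admits coding in $L[T]$, the equivalent conditions of Proposition~\ref{prop:nebulously-not-coding} are violated, supplying $D \subseteq A^3$ such that the extension relation on the set of finite triples $(X,Y,Z) \in \Pc(A)^3$ with $D \cap (X \times Y \times Z)$ the graph of a bijection $X \times Y \to Z$ is ill-founded. By Fact~\ref{fact:ill-founded-forcing} I pass to a forcing extension containing a strictly increasing $\omega$-chain $(X_n, Y_n, Z_n)_{n \in \omega}$ in this set; the bijections $h_n : X_n \times Y_n \to Z_n$ read off from $D$ are automatically compatible under extension, so $X := \bigcup_n X_n$, $Y := \bigcup_n Y_n$, $Z := \bigcup_n Z_n$ are infinite and $D \cap (X \times Y \times Z)$ is the graph of $h := \bigcup_n h_n$. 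The main technical obstacle I foresee is in cases (i) and (ii-a): ill-foundedness of the extension tree alone would only yield an $\omega$-chain whose combined domain could be a proper subset of $\mathbb{Q}$, so I rely on the finer \emph{richness} of the tree (via surjectivity of $\pi$ or $\omega$-categoricity of $T$) to guarantee density of $\mathbb{P}$ at every rational and thereby obtain genuinely $\mathbb{Q}$-indexed functions in the forcing extension.
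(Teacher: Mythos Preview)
Your approach is essentially the paper's, repackaged as a direct case analysis rather than a contrapositive. The paper assumes both bullet points fail, invokes Fact~\ref{fact:ill-founded-forcing} to convert this into well-foundedness statements, then applies Propositions~\ref{prop:nebulous-stability-char} and~\ref{prop:nebulously-not-coding} to conclude that $A$ is nebulously stable and nebulously does not admit coding, hence nebulously monadically stable by Theorem~\ref{coding}, and finishes with Proposition~\ref{prop:neb-mon-stab-rad-conv}. You instead start from the failure of the growth bound and run the same chain of implications forward, which amounts to re-proving inline the relevant directions of Propositions~\ref{prop:nebulous-stability-char} and~\ref{prop:nebulously-not-coding}. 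The paper's route is shorter only because those propositions already encapsulate the ``construct $D$ in the ground model and transport the witnessing configuration via $\omega$-categoricity'' step you spell out in cases (i), (ii-a), (ii-b).

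One point in your write-up deserves more care. In case (ii-a) your poset $\mathbb{P}$ consists of all pairs $(f,g)$ satisfying merely the $D$-order-property condition, but Lemma~\ref{lem:omega-cat-ill-founded} only guarantees extendibility for conditions realizing the \emph{full type} (over the chosen parameters $\bar c'$) of a finite piece of the $\mathbb{Q}$-indexed sequence from the countable model; an arbitrary condition in your $\mathbb{P}$ need not extend at a given $q$. The clean fix---implicit in the proof of Proposition~\ref{prop:nebulous-stability-char}---is to drop $\mathbb{P}$ and work directly with the ill-founded tree of type-correct realizations indexed along an exhaustion $X_0\subset X_1\subset\cdots$ of $\mathbb{Q}$, then cite Fact~\ref{fact:ill-founded-forcing} to obtain an infinite branch, and hence total $f,g:\mathbb{Q}\to A$, in a forcing extension. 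The analogous remark applies to case (i): ``pick any $\pi$-preimage of $q$'' justifies density only once you restrict to conditions that are partial sections of $\pi$.
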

\begin{proof}
  We will prove the contrapositive. Assume that the conditions in the two bullets both fail. By Fact~\ref{fact:ill-founded-forcing}, this implies that 
  \begin{itemize}
  \item there does not exist a $D \subseteq A^2$ such that the set of pairs of finite partial functions $(f,g)$ from $\mathbb{Q}$ to $A$ with $\mathrm{dom}(f) =\mathrm{dom}(g)$ satisfying the condition in the statement of the theorem is ill-founded under extension, and
  \item there does not exist a $D \subseteq A^{3}$ such that the set $\{(X,Y,Z) \in \Pc_{\mathrm{fin}}(A)^3 : D \cap (X\times Y \times Z)~\text{\rm is the graph of a bijection}~h : X \times Y \to Z\}$ is ill-founded under the strict partial order $(X,Y,Z) < (X',Y',Z')$ defined by $X \subset X' \wedge Y \subset Y' \wedge Z \subset Z'$.
  \end{itemize}
  By Proposition~\ref{prop:nebulous-stability-char}, $A$ is nebulously stable. Therefore by Proposition~\ref{prop:nebulous-free-omega-cat}, $A$ is power-Dedekind-finite and so by Lemma~\ref{lem:well-orderable-implies-countable}, each $C_n$ is finite. By Proposition~\ref{prop:nebulously-not-coding}, $A$ nebulously does not admit coding. By Theorem~\ref{coding},
  we have that $A$ is nebulously monadically stable. The result now follows by Proposition~\ref{prop:neb-mon-stab-rad-conv}.
\end{proof}

We can also use the following proposition to prove a variant of Theorem~\ref{thm:mon-stable-ZF}.

\begin{proposition}\label{prop:NIP-sMf-char}
  $(\ZF)$ If $A$ is nebulously NIP, then $A$ is strictly Mostowski finite if and only if every strict partial order on $A$ is well-founded.
\end{proposition}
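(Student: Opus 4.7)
The forward direction is immediate from the definition: strict Mostowski-finiteness says that every strict partial order on $A$ has chains of uniformly bounded length, so in particular no such order has an infinite descending chain and each is well-founded.

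For the backward direction, the plan is to argue the contrapositive. Suppose $A$ is not strictly Mostowski-finite. By Corollary~\ref{cor:Walczak-Typke-rephrase}, $A$ is not nebulously NSOP, so there is a countable theory $T$ admitted by $A$ with $L[T]\models$ ``$T$ has SOP''. Since NIP is preserved under one-dimensional interpretations and $\thy(\omega,+,\times)$ is not NIP, Proposition~\ref{prop:nebulous-free-omega-cat} applied to NIP shows that $A$ is power-Dedekind-finite, so $L[T]$ sees $T$ as $\omega$-categorical, NIP, and (via the dichotomy NIP${}+{}$SOP${}\Rightarrow{}$unstable) unstable. The underlying machinery of Lemma~\ref{unstable_poly}---namely, Simon's theorem on linear orders in NIP theories combined with the reduction of $\bigvee$-definable relations to definable ones in the $\omega$-categorical setting---then yields inside $L[T]$ a formula $\psi(x,y;\bar c)$ in singletons (with parameters $\bar c$) defining a strict partial order on some infinite definable set $S$ of the countable model of $T$. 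By compactness and $\omega$-categoricity, arrange an infinite descending chain $a_0 > a_1 > a_2 > \cdots$ in $S$ under this strict partial order.

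To transport this to $\mathfrak A$ in $V$, let $T_{\bar c}$ denote the (still $\omega$-categorical) expansion of $T$ by constants naming $\bar c$; since $A$ is infinite and realizes every $T$-type, pick $\bar c' \in A$ realizing the type of $\bar c$. Applying Lemma~\ref{lem:omega-cat-ill-founded} to $T_{\bar c}$ with the transferred sequence $(a_0, a_1, \ldots)$ yields that in the expanded structure $(\mathfrak A, \bar c')$, the tree of finite tuples $(b_0, \ldots, b_{n-1}) \in A^{<\omega}$ with the same type over $\bar c'$ as the corresponding initial segment is ill-founded under extension. Each such tuple is a finite descending chain $b_0 > b_1 > \cdots > b_{n-1}$ in the strict partial order $\psi(\cdot,\cdot;\bar c')$ on the definable set $S' \subseteq A$, and any rank function on that partial order would yield a rank function on the tree (send each tuple to the rank of its last coordinate). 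So the partial order $\psi(\cdot,\cdot;\bar c')$ on $S'$ is itself ill-founded in $V$, and extending it to all of $A$ by declaring points outside $S'$ pairwise incomparable produces an ill-founded strict partial order on $A$, contradicting the hypothesis.

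I expect the only delicate step to be extracting in $L[T]$ a definable strict partial order on \emph{singletons}. SOP by itself only guarantees a definable partial order on tuples, and without NIP one does not know how to descend to singletons---so the nebulous NIP hypothesis enters essentially twice here: to force $\omega$-categoricity (so that Simon's $\bigvee$-definable linear order is outright definable in finitely many variables) and to make Simon's theorem available in the first place. The remaining steps are routine ZF absoluteness bookkeeping.
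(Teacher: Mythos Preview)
Your argument is correct and follows the same overall arc as the paper: obtain an $\omega$-categorical NIP unstable theory on $A$, descend to a strict partial order on singletons using work of Simon, and transport ill-foundedness back to $A$ via Lemma~\ref{lem:omega-cat-ill-founded}. The paper differs in two minor respects. First, rather than invoking Corollary~\ref{cor:Walczak-Typke-rephrase} to produce an abstract SOP theory, it directly takes a partial order on some $A^n$ with unbounded chains (from the definition of failure of strict Mostowski-finiteness) and sets $\Afr=(A,<)$. Second, and this is the only substantive divergence, for the descent to singletons the paper uses Fact~\ref{fact:Simon} (stability is detected by formulas in two singleton variables) followed by Shelah's argument that an NIP unstable formula yields an SOP formula in the same free variables; this produces a $\psi(x,y,\bar b)$ defining a strict partial order on all of $A$, so your final extension step from $S'$ to $A$ is not needed. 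Your route via the machinery behind Lemma~\ref{unstable_poly} also works, provided one reads \cite[Theorem~5.8]{simon2022linear} as actually producing a definable linear (quasi-)order rather than merely the order property as recorded in the statement of Lemma~\ref{unstable_poly}; it does, but you should say so explicitly rather than leaving the reader to check.
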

\begin{proof}
  Clearly if there is an ill-founded strict partial order $<$ on $A$, then $<$ must have arbitrarily large finite chains and so $A$ is not strictly Mostowski finite, so assume that $A$ is nebulously NIP but not strictly Mostowski finite. Fix a partial order $<$ on $A^n$ with arbitrarily large chains and let $\Afr$ be the structure $(A,<)$. Let $T = \thy(\Afr)$. We have that $T$ is NIP in $L[T]$. By the main theorem of \cite{Simon2021}, there is an unstable formula $\varphi(x,y,\bbar)$ in $T$ for some parameters $\bbar$ in some model of $T$. By Shelah's proof that stability is equivalent to the conjunction of NIP and NSOP, we get that there is a formula $\psi(x,y,\bbar)$ (still with two free variables) defining a strict partial order witnessing that $T$ is SOP.\footnote{See also the discussion after Question~1.3 in \cite{Johnson2025}.} Let $(a_i)_{i \in \omega}$ be an infinite $\psi(x,y,\bbar)$-chain. By Lemma~\ref{lem:omega-cat-ill-founded} applied to the sequence $\bbar,a_0,a_1,a_2,\dots$, we have that there is a $\dbar \in A$ such that $R = \{(x,y) \in A^2 : \Afr\models \psi(x,y,\dbar)\}$ is ill-founded.
\end{proof}

\begin{theorem}\label{thm:mon-stable-ZF-II}
  $(\ZF)$ For any set $A$, if there is a family $(C_n)_{n \in \omega}$ with $C_n \subseteq \Pc(A^{(n)})$ satisfying the assumptions of Theorem~\ref{thm:mon-stable-ZF}, then either
  \begin{itemize}
  \item there is an ill-founded strict partial order on $A$ (and in particular $A$ is not strictly Mostowski-finite) or
    \item there is a $k \in \omega$ and $D \subseteq A^{2k+1}$ such that in some forcing extension there are infinite sets $X,Y \subseteq A^k$ and $Z \subseteq A$ such that $D\cap (X\times Y \times Z)$ is the graph of a bijection $f : X\times Y \to Z$.
  \end{itemize}
\end{theorem}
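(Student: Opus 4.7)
The plan is to prove the contrapositive, paralleling the proof of Theorem~\ref{thm:mon-stable-ZF} but substituting its nebulous-stability step with an application of Proposition~\ref{prop:NIP-sMf-char}. Assume $\bigcup_n C_n$ is well-orderable, $|C_n| > 2^{n!r^n}$ for infinitely many $n$, and neither of the two bullets holds; the goal is to reach a contradiction via Proposition~\ref{prop:neb-mon-stab-rad-conv}.

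By Fact~\ref{fact:ill-founded-forcing}, the failure of the second bullet is equivalent to the well-foundedness, for every $k \in \omega$ and every $D \subseteq A^{2k+1}$, of the strict order on triples $(X,Y,Z)$ appearing in Proposition~\ref{prop:nebulously-not-tuple-coding}(\ref{finitary-tup-code-2}): an ill-founded $\omega$-chain $(X_i,Y_i,Z_i)$ in a forcing extension unions to an infinite witness as in the second bullet, and conversely an infinite witness immediately generates such a chain. Hence by that proposition, $A$ is nebulously monadically NIP, and in particular nebulously NIP. The failure of the first bullet says every strict partial order on $A$ is well-founded, so Proposition~\ref{prop:NIP-sMf-char} yields that $A$ is strictly Mostowski-finite, and Corollary~\ref{cor:Walczak-Typke-rephrase} then gives that $A$ is nebulously NSOP.

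For any countable theory $T$ admitted by $A$, Proposition~\ref{prop:monadic-stability-char} applied inside $L[T]$ upgrades `monadically NIP' together with `NSOP' to `monadically stable'. Thus $A$ is nebulously monadically stable, and Proposition~\ref{prop:neb-mon-stab-rad-conv} forces $|C_n|$ to be eventually dominated by $2^{n!r^n}$ for every real $r > 0$, contradicting the hypothesis. The only new ingredient beyond the proof of Theorem~\ref{thm:mon-stable-ZF} is the passage from `no ill-founded strict partial order on $A$' to nebulous NSOP, which is exactly the content of Proposition~\ref{prop:NIP-sMf-char}; the underlying model-theoretic input there (Shelah's theorem that stability equals NIP plus NSOP) has already been absolutized via $L[T]$, so no substantive obstacle is expected.
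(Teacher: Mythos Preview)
Your proof is correct and follows essentially the same route as the paper: contrapositive, use Fact~\ref{fact:ill-founded-forcing} and Proposition~\ref{prop:nebulously-not-tuple-coding} to get nebulous monadic NIP from the failure of the second bullet, apply Proposition~\ref{prop:NIP-sMf-char} with the failure of the first bullet to get strict Mostowski-finiteness (hence nebulous NSOP), combine via Proposition~\ref{prop:monadic-stability-char} to obtain nebulous monadic stability, and finish with Proposition~\ref{prop:neb-mon-stab-rad-conv}. The only cosmetic difference is that you make the passage from strictly Mostowski-finite to nebulously NSOP explicit via Corollary~\ref{cor:Walczak-Typke-rephrase}, whereas the paper leaves this step implicit.
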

\begin{proof}
  We will prove the converse. Assume that both bullet points fail. By Fact~\ref{fact:ill-founded-forcing}, the failure of the second bullet point implies that there does not exist an $n \in \omega$ and $D \subseteq A^{2k+1}$ such that the set $\{(X,Y,Z) \in \Pc_{\mathrm{fin}}(A^k) \times \Pc_{\mathrm{fin}}(A^k)\times \Pc_{\mathrm{fin}}(A) : D \cap (X \times Y \times Z)~\text{\rm is the graph of a bijection}~f : X \times Y \to Z\}$ is ill-founded under the strict partial order $(X,Y,Z) < (X',Y',Z')$ defined by $X \subset X' \wedge Y \subset Y' \wedge Z \subset Z'$.
  By Proposition~\ref{prop:nebulously-not-tuple-coding}, $A$ is nebulously monadically NIP and therefore nebulously NIP. By Proposition~\ref{prop:NIP-sMf-char}, this implies that $A$ is strictly Mostowski-finite, and therefore in particular power-Dedekind-finite. By Proposition~\ref{prop:nebulously-not-tuple-coding} again, we have that $A$ nebulously does not admit tuple-coding. It follows from Proposition~\ref{prop:monadic-stability-char} that $A$ is nebulously monadically stable. The result now follows by Proposition~\ref{prop:neb-mon-stab-rad-conv}.
\end{proof}

\subsection{Nebulous cellularity}
\label{sec:nebulous-cellularity}

We can use Lemma~\ref{not_cell} together with the fact that that a countable first-order theory is cellular if and only if it is $\omega$-categorical and mutually algebraic \cite[Thm.~3.7]{braunfeld2022mutual} to get another result in the vein of Theorem~\ref{thm:mon-stable-ZF}.

\begin{theorem}\label{thm:cellular-ZF}
  $(\ZF)$ For any set $A$ and family $(C_n)_{n \in \omega}$ with $C_n \subseteq \Pc(A^{(n)})$, if $\bigcup_{n \in \omega} C_n$ is well-orderable and for any $c,d \in \R$ with $d < 1$, there is an $n$ such that $|C_n| > 2^{cn^{dn}}$, then either
  \begin{itemize}
  \item there is a $D \subseteq A^2$ such that in some forcing extension there are functions $f,g : \mathbb{Q} \to A$ such that for any $i,j \in \mathbb{Q}$, $(f(i),g(j)) \in D$ if and only if $i < j$,
  \item there is a $D \subseteq A^3$ such that in some forcing extension there are infinite sets $X,Y,Z \subseteq A$ such that $D\cap (X\times Y\times Z)$ is the graph of a bijection $f : X\times Y \to Z$, or
  \item $A$ can be partitioned into infinitely many infinite sets.
  \end{itemize}
\end{theorem}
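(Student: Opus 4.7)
The plan is to prove the contrapositive by extending the argument of Theorem~\ref{thm:mon-stable-ZF}. Assume all three bullets fail and set $T := \thy(\Cfr)$. The failure of the first two bullets, together with Fact~\ref{fact:ill-founded-forcing}, Propositions~\ref{prop:nebulous-stability-char} and~\ref{prop:nebulously-not-coding}, Lemma~\ref{lem:well-orderable-implies-countable}, and Theorem~\ref{coding} applied inside $L[T]$, runs exactly as in the proof of Theorem~\ref{thm:mon-stable-ZF} and delivers that $A$ is nebulously monadically stable, power-Dedekind-finite, and that each $C_n$ is finite.

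The new step uses the failure of the third bullet to upgrade nebulous monadic stability to nebulous cellularity. Suppose for contradiction that in $L[T]$ the theory $T$ is monadically stable but not cellular. Applying Lemma~\ref{not_cell} inside $L[T]$ yields a formula $\psi(x,y)$ over $\emptyset$ defining in the countable model of $T$ an equivalence relation on singletons with infinitely many infinite classes. The statements that $\psi$ is an equivalence relation, that there are at least $n$ distinct $\psi$-classes, and that every $\psi$-class contains at least $n$ elements, are first-order schemes and hence lie in the complete theory $T$. Since $\Cfr \models T$ in $V$, the interpretation of $\psi$ on $A$ is an equivalence relation with infinitely many classes, each infinite, which is a partition of $A$ into infinitely many infinite sets, contradicting the failure of the third bullet.

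Hence $A$ is nebulously cellular. I then invoke Proposition~\ref{prop:main-meta-prop} in its $S^\ast_n/\ell_n$ version with $P$ the property of cellularity and $\Oc$ the (arithmetically definable, downward-closed) class of functions $f : \omega \to \omega$ for which there exist rationals $c > 0$ and $d \in [0,1)$ with $f(n) \leq cn^{dn}$ for every $n \geq 1$; the hypothesis $(\ast)$ is precisely Theorem~\ref{cd}. This yields $(n \mapsto \ell_n(\Cfr)) \in \Oc$, and therefore $|C_n| \leq 2^{\ell_n(\Cfr)} \leq 2^{cn^{dn}}$ for some rational $c>0$ and $d<1$ and every $n$, contradicting the growth hypothesis on $(|C_n|)$.

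The main subtlety is in the second paragraph: transferring the definable equivalence relation found inside the inner model $L[T]$ to an actual partition of $A$ in $V$. This is resolved by observing that the relevant properties of $\psi$ are all captured by first-order schemes, which must therefore already lie in the complete theory $T$ (computed identically in $L[T]$ and $V$) and hence be witnessed in $\Cfr$; everything else then reduces to a direct application of the absoluteness machinery already assembled for Theorem~\ref{thm:mon-stable-ZF}.
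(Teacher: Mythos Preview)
Your argument is the contrapositive of the paper's direct proof and uses the same ingredients (Lemma~\ref{not_cell}, Theorem~\ref{cd}, and the transfer machinery of Proposition~\ref{prop:main-meta-prop}), so the overall strategy is sound. There is, however, a genuine gap in your second paragraph: Lemma~\ref{not_cell} only asserts that $\psi$ has infinitely many \emph{infinite} classes, not that \emph{every} class is infinite. Hence the scheme ``every $\psi$-class contains at least $n$ elements'' need not lie in $T$, and the equivalence relation you transfer to $A$ may well have finite blocks, so you have not yet produced a partition of $A$ into infinitely many infinite pieces. The paper addresses exactly this point: after transferring the equivalence relation to $A$ it explicitly merges the finite classes with one chosen infinite class. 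You can repair your argument the same way, or alternatively first restrict to the $\emptyset$-definable set of elements lying in an infinite $\psi$-class (definable because $\omega$-categoricity uniformly bounds the sizes of finite classes), transfer \emph{that} to $A$, and then absorb the complement into a single infinite class.

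A minor terminological point: you write ``Hence $A$ is nebulously cellular'', but you have only shown that the one theory $T=\thy(\Cfr)$ is cellular in $L[T]$, not that every countable theory admitted by $A$ is. This is harmless, since Proposition~\ref{prop:main-meta-prop} only requires the hypothesis on $\thy(\Cfr)$, but the phrasing overclaims.
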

\begin{proof}
  Assume that the first two bullet points fail. By the same argument as in the proof of Theorem~\ref{thm:mon-stable-ZF}, $A$ is nebulously monadically stable and $\omega$-categorical. Let $T = \thy(\Cfr)$. By \cite[Lem.~3.49]{bodor2024classification}, we must have that $T$ is not cellular in $L[T]$. By \cite[Thm.~3.7]{braunfeld2022mutual}, $T$ is not mutually algebraic in $L[T]$. Therefore we must have by  Lemma~\ref{not_cell} that there in the unique countable model of $T$ there is a definable equivalence relation $\varphi(x,y,b)$ with infinitely many infinite equivalence classes. By picking some $c$ in the structure on $A$ induced by $\Cfr$ with the same type as $b$, we get that $E = \{(x,y) \in A^2 : \varphi(x,y,c)\}$ is also an equivalence relation with infinitely many infinite equivalence classes. By taking combining the finite $E$-classes with some infinite $E$-class, we get a partition of $A$ into infinitely many infinite sets.
\end{proof}

Of course, we can also replace the first two bullet points of Theorem~\ref{thm:cellular-ZF} with the conclusions of Theorem~\ref{thm:mon-stable-ZF-II}. 

\subsection{A structure theorem for amorphous sets}
\label{sec:structure-theorem-amorphous}

Here we will give a structural dichotomy for amorphous sets, strengthening some of the results of \cite{Truss_1995}.

\begin{definition}
  For any vector space $X$, let $\mathbf{P}(X)$ be the projective space of $X$.

  Fix a finite field $\mathbb{F}_q$. The \emph{(countably) infinite-dimensional projective space over $\mathbb{F}_q$} is $\mathbf{P}(\mathbb{F}_q^{\bigoplus \omega})$ endowed with ternary collinearity relation and a function for each $\alpha \in \mathbb{F}_q$ that take collinear distinct $x_0,x_1,x_2$ and returns the unique $x_3$ such that the cross ratio $[x_0,x_1,x_2,x_3]$ is $\alpha$.

  An \emph{infinite-dimensional projective space structure (over $\mathbb{F}_q$)} is a model of the above theory.
\end{definition}

The automorphism group of the infinite-dimensional projective space (over $\mathbb{F}_q$) in the sense of the first-order structure given here is the same as it is in the typical sense (i.e., $\mathrm{PGL}(\omega,\mathbb{F}_q)$) \cite[p.~121]{Pillay1996-lt}. Note, however, that in the absence of choice, we cannot show that if a set $A$ admits an infinite-dimensional projective space structure over $\mathbb{F}_q$, then this structure is isomorphic to $\mathbf{P}(X)$ for some $\mathbb{F}_q$-vector space $X$.

\begin{proposition}\label{prop:omega-cat-strongly-minimal-char}
  $(\ZFC)$ Fix a countable complete $\omega$-categorical strongly minimal theory $T$. The following are equivalent.
  \begin{enumerate}
  \item\label{non-trivial-forking} $T$ has non-trivial forking.
  \item\label{affine-or-projective-geometry} The geometry of $T$ is either an infinite-dimensional affine space over a finite field or an infinite-dimensional projective space over a finite field.
  \item \label{interprets-group} For any $M \models T$, $T$ interprets an infinite group using parameters from $M$.
  \item\label{specific-automorphism-group} For any $M \models T$, $T$ has a one-dimensional interpretation using parameters from $M$ of an infinite-dimensional projective space structure over a finite field.
  \item\label{T-not-mod-stab} $T$ is not monadically stable.
  \item $T$ is not monadically NIP.
  \item $T$ is not mutually algebraic.
  \item\label{T-not-cellular} $T$ is not cellular.
  \item\label{Bell-equivalent} $s_n(T) > {B_n^{(2)}}$ for infinitely many $n$.
  \item\label{power-equivalent} There is a quadratic polynomial $p(n)$ with leading term $\frac{1}{4}n^2$ such that $\ell_n(T) > 2^{p(n)}$ for all $n$.
  \end{enumerate}
\end{proposition}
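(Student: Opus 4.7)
The plan is to organize the proof around the Cherlin--Harrington--Lachlan and Zilber classification of $\omega$-categorical strongly minimal theories, which asserts that the pregeometry of such a $T$ is of one of three kinds: disintegrated, an infinite-dimensional affine space over a finite field, or an infinite-dimensional projective space over a finite field. Since non-trivial forking is precisely the negation of disintegratedness of the pregeometry, this immediately yields (\ref{non-trivial-forking}) $\Leftrightarrow$ (\ref{affine-or-projective-geometry}), and the principal obstacle is in fact locating and invoking this classification in exactly the form needed, both to identify the two non-trivial geometries and to extract a cellular structure in the disintegrated case. All remaining equivalences are then proved by showing that the non-trivial case implies every ``large'' condition and the disintegrated case implies every ``small'' (cellular) one.

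For the geometric and group-theoretic conditions, I would argue (\ref{affine-or-projective-geometry}) $\Rightarrow$ (\ref{specific-automorphism-group}) $\Rightarrow$ (\ref{interprets-group}) $\Rightarrow$ (\ref{non-trivial-forking}). An affine geometry projectivizes to a projective one by fixing a basepoint and quotienting the nonzero vectors by the scalar action, all definable over parameters; the projective structure then defines $\mathbb{F}_q$ via collinearity and cross-ratio functions, and hence defines infinite groups such as the additive group of an affine line or $\mathrm{PGL}(\omega,\mathbb{F}_q)$. The return implication (\ref{interprets-group}) $\Rightarrow$ (\ref{non-trivial-forking}) is the classical fact that a disintegrated strongly minimal theory does not interpret an infinite group.

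For the tameness block, the equivalences among (\ref{T-not-mod-stab}), (6), (7), and (\ref{T-not-cellular}) follow from results already in the paper applied to our stable $T$: Theorem~\ref{coding} handles monadic stability versus monadic NIP versus coding, while the link between cellularity and mutual algebraicity via \cite[Thm.~3.7]{braunfeld2022mutual} and \cite[Lem.~3.49]{bodor2024classification} closes the loop for $\omega$-categorical stable theories. It remains to tie these to (\ref{non-trivial-forking}). In the disintegrated case, Lachlan's classification realizes the structure as a pure set equipped with a stack of $\emptyset$-definable equivalence relations whose classes are uniformly bounded at every level, which is cellular. In the non-trivial case, the definable infinite-dimensional $\mathbb{F}_q$-geometry exhibits coding directly: taking $\varphi(x,y,z)$ to express that $z$ is a fixed $\mathbb{F}_q$-affine combination of $x$ and $y$ produces infinite sets $A,B,C$ satisfying Definition~\ref{def:coding}, so $T$ is not monadically stable.

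Finally, for the growth rate conditions, in the disintegrated case an $n$-type of distinct elements is determined by the refinement pattern of the finitely many $\emptyset$-definable equivalence relations restricted to the tuple, giving $\ell_n(T), s_n(T) = O(B^{(2)}_n)$ and hence precluding both (\ref{Bell-equivalent}) and (\ref{power-equivalent}). In the non-trivial case, counting orbits on $n$-tuples of points in the projective space over $\mathbb{F}_q$ yields at least $q^{\lfloor n^2/4 \rfloor}$ orbits (matching the count of subspace flags of $\mathbb{F}_q^n$ in reduced row-echelon form). This gives (\ref{power-equivalent}) with $p(n) = \lfloor n^2/4 \rfloor \log_2 q$, and since $B_n^{(2)} = n^{n(1+o(1))}$ is dominated by $q^{c n^2}$ for any $c > 0$, it also gives (\ref{Bell-equivalent}).
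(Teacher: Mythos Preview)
Your overall architecture matches the paper's: both pivot on the Cherlin--Harrington--Lachlan/Zilber trichotomy to get (\ref{non-trivial-forking}) $\Leftrightarrow$ (\ref{affine-or-projective-geometry}), then close the tameness block (\ref{T-not-mod-stab})--(\ref{T-not-cellular}) via Theorem~\ref{coding} and the mutual-algebraicity/cellularity equivalence, and finally handle the growth-rate conditions by case analysis on the geometry. The paper differs in two places, and in one of them you have a genuine gap.

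For $\neg$(\ref{T-not-cellular}) $\Rightarrow$ $\neg$(\ref{Bell-equivalent}), your argument does not establish the required \emph{exact} inequality $s_n(T)\leq B_n^{(2)}$ for cofinitely many $n$; you only claim $s_n(T)=O(B_n^{(2)})$, and a constant factor above $1$ would not preclude (\ref{Bell-equivalent}). Your description of the disintegrated case as ``a stack of $\emptyset$-definable equivalence relations'' is really a description of cellular structures in general, and for such structures the type count is not obviously bounded by a single refinement pair of partitions. The paper instead uses the general cellular bound $\ell_k(T)\leq B_k$ (Theorem~\ref{cd} and Remark~\ref{cd_bell}) and then applies the Stirling identity $s_n(T)=\sum_k \bracenom{n}{k}\ell_k(T)\leq \sum_k \bracenom{n}{k}B_k = B_n^{(2)}$ from Observations~\ref{stirling} and~\ref{bb2}, which gives the sharp inequality on the nose.

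For (\ref{affine-or-projective-geometry}) $\Rightarrow$ (\ref{power-equivalent}), the paper does not compute the $q^{\lfloor n^2/4\rfloor}$ lower bound directly but cites \cite[Thm.~1.4 and the proof of Lem.~3.4]{Macpherson_1987} for a polynomial of degree at least $2$, and then invokes the Steitz upper bound (Fact~\ref{fact:Steitz}) to force the degree to be exactly $2$. Your row-echelon sketch is morally the same count Macpherson uses, but as written it conflates subspaces of $\mathbb{F}_q^n$ with orbits on $n$-tuples; you would need to say explicitly that distinct $\lfloor n/2\rfloor$-dimensional subspaces of the span of a fixed $n$-tuple yield distinct types.
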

\begin{proof}
  Let (\ref{interprets-group})$^\ast$ and (\ref{specific-automorphism-group})$^\ast$ be the analogous statements with `For some $M \models T$' instead of `For any $M\models T$'.
  
  The equivalence of (\ref{non-trivial-forking}), (\ref{affine-or-projective-geometry}), (\ref{interprets-group})$^\ast$,  (\ref{specific-automorphism-group})$^\ast$, and (\ref{T-not-mod-stab})--(\ref{T-not-cellular}) follows from the fact that interpreting an infinite group entails non-triviality of forking together with \cite[Thm.~2.1]{Cherlin_1985}, the group configuration theorem, \cite[Thm.~3.7]{braunfeld2022mutual}, and Theorem~\ref{coding}.\footnote{See \cite[Ch.~2, Sec.~4]{Pillay1996-lt} for a specific discussion of the group configuration in the case of $\omega$-categorical strongly minimal theories, including the fact that it gives a definable group instead of merely a type-definable group, and see \cite[p.~119]{Pillay1996-lt} for an argument that (\ref{specific-automorphism-group}) holds when the geometry of $T$ is affine or projective.} Since $T$ is $\omega$-stable, we get that (\ref{interprets-group})$^\ast$ and (\ref{specific-automorphism-group})$^\ast$ are equivalent to (\ref{interprets-group}) and (\ref{specific-automorphism-group}), respectively, so we have that (\ref{non-trivial-forking})-(\ref{T-not-cellular}) are equivalent.

  Assume (\ref{affine-or-projective-geometry}). By \cite[Thm.~1.4]{Macpherson_1987} (and specifically the proof of \cite[Lem.~3.4]{Macpherson_1987}), there is a polynomial $p(n)$ of degree at least $2$ (with positive leading coefficient) such that $\ell_n(T) > 2^{p(n)}$ for all $n$. By Fact~\ref{fact:Steitz} (and since $\ell_n(T) \leq s_n(T)$), $p(n)$ must be quadratic and so (\ref{power-equivalent}) holds.

  Clearly, $B_n^{(2)}\leq B_n^2$, thus $\log B_n^{(2)}$ is $\mathcal{O}(n\log n)$. By this it follows that (\ref{power-equivalent}) implies (\ref{Bell-equivalent}).

  Finally, we will prove that $\neg$(\ref{T-not-cellular}) implies $\neg$(\ref{Bell-equivalent}). Assume that (\ref{T-not-cellular}) fails. Then by Theorem~\ref{cd} and Remark~\ref{cd_bell} we know that $\oi_n(T)\leq B_n$ if $n$ is large enough. This implies that $s_n(T) \leq \sum_{k=1}^n \bracenom{n}{k}B_k=B_n^{(2)}$.
\end{proof}

Note that in the following theorem, (\ref{amorphous-projective-space}) is equivalent to the statement that $A$ is of projective type in the sense of \cite{Truss_1995}.

\begin{theorem}\label{thm:amorphous-structure-theorem}
  $(\ZF)$ For any amorphous set $A$, the following are equivalent.
  \begin{enumerate}
  \item\label{amorphous-group} There is a $k\in \omega$ such that some infinite quotient of $A^k$ is the underlying set of a group.
  \item\label{amorphous-projective-space} Some quotient of $A$ is the underlying set of an infinite-dimensional projective space structure over a finite field.
  \item\label{amorphous-forcing} There is a $D \subseteq A^3$ such that in some forcing extension there are infinite sets $X,Y,Z \subseteq A$ such that $D\cap (X\times Y\times Z)$ is the graph of a bijection $h : X\times Y \to Z$.
  \item\label{amorphous-finitary} There is a $k \in \omega$ and $D \subseteq A^{2k+1}$ such that for every $m \in \omega$, there are $X,Y \subseteq A^k$ and $Z \subseteq A$ such that $|X| = |Y| = m$ and $D \cap (X\times Y\times Z)$ is the graph of a bijection $h : X \times Y \to Z$.
  \item\label{amorphous-slow} There is a family $(C_n)_{n \in \omega}$ with $C_n \subseteq \Pc(A^n)$ such that $\bigcup_{n \in \omega}C_n$ is well-orderable and $|C_n| > 2^{B_n^{(2)}}$ for infinitely many $n$.
  \item\label{amorphous-faster} There is a family $(C_n)_{n \in \omega}$ with $C_n \subseteq \Pc(A^{(n)})$ such that $\bigcup_{n \in \omega}C_n$ is well-orderable and for a quadratic polynomial $p(n)$ with leading term $\frac{1}{4}n^2$, $|C_n| > 2^{2^{p(n)}}$ for all $n$. 
  \end{enumerate}
\end{theorem}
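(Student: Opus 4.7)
Since $A$ is amorphous, Corollary~\ref{cor:Walczak-Typke-rephrase} ensures that every countable theory $T$ admitted by $A$ is $\omega$-categorical and strongly minimal in $L[T]$. This lets us apply Proposition~\ref{prop:omega-cat-strongly-minimal-char} inside $L[T]$ for any such $T$ and transfer its syntactic consequences back to $V$ by absoluteness. The plan is to establish the cycle (1)$\Rightarrow$(2)$\Rightarrow$(6)$\Rightarrow$(5)$\Rightarrow$(4)$\Rightarrow$(3)$\Rightarrow$(1).

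For (1)$\Rightarrow$(2), take $T$ to be the theory of $A$ equipped with the equivalence relation $E$ on $A^k$ and the relation coding the group operation on $A^k/E$. In $L[T]$, $T$ interprets an infinite group, so Proposition~\ref{prop:omega-cat-strongly-minimal-char}(\ref{specific-automorphism-group}) yields a one-dimensional interpretation of an infinite-dimensional projective space structure over a finite field; the interpreting formulas are absolute and so define the same interpretation in $V$ (absorbing the finitely many points outside the interpretation's domain into one equivalence class using amorphicity). For (2)$\Rightarrow$(6), let $T$ be the theory of the projective space structure on $A/E$. Proposition~\ref{prop:omega-cat-strongly-minimal-char}(\ref{power-equivalent}) gives $\ell_n(T) > 2^{p(n)}$ in $L[T]$ for a quadratic $p$ with leading term $\frac{1}{4}n^2$, and $\omega$-categoricity ensures every union of $n$-types of distinct elements is definable, so letting $C_n$ be the collection of all such definable subsets of $A^{(n)}$ yields $|C_n| = 2^{\ell_n(T)} > 2^{2^{p(n)}}$, with $\bigcup_n C_n$ well-orderable since the language of $T$ is. (6)$\Rightarrow$(5) is immediate because $2^{p(n)}$ eventually dominates $B_n^{(2)}$.

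For (5)$\Rightarrow$(4), let $T=\thy(\Cfr)$; from $|C_n| > 2^{B_n^{(2)}}$ we get $s_n(T) \geq s_n(\Cfr) > B_n^{(2)}$ infinitely often, so Proposition~\ref{prop:omega-cat-strongly-minimal-char}(\ref{Bell-equivalent}) applied in $L[T]$ forces $T$ to be non-cellular and hence, by Theorem~\ref{coding}, to admit coding. By Lemma~\ref{lem:coding-finitary} this is a schema of first-order existence sentences in the complete theory $T$, so these sentences hold in the model on $A$ in $V$, yielding (4) with $k=1$. For (4)$\Rightarrow$(3), the same schema shows $T = \thy(A,D)$ admits coding in $L[T]$; selecting an infinite coding witness there and applying Lemma~\ref{lem:omega-cat-ill-founded} proves that the tree of finite triples $(X,Y,Z) \subseteq A$ for which $D \cap (X \times Y \times Z)$ is a bijection is ill-founded in $V$, whereupon Fact~\ref{fact:ill-founded-forcing} produces the infinite $X,Y,Z$ in a forcing extension. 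Finally, (3)$\Rightarrow$(1): the forcing witness of (3) certifies finite coding witnesses for $D$ of every size as sentences in the complete theory $T = \thy(A,D)$, so $T$ admits coding in $L[T]$, is not monadically stable there, and hence interprets an infinite group by Proposition~\ref{prop:omega-cat-strongly-minimal-char}; the interpretation transfers to $V$.

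The main technical difficulty I anticipate is the careful bookkeeping of the $L[T]$-to-$V$ transfers: showing that ``admits coding'', viewed as a first-order schema on $T$, survives the passage in both directions, and invoking Lemma~\ref{lem:omega-cat-ill-founded} to turn ill-foundedness of the type-tree inside $L[T]$ into ill-foundedness of the concrete tree of finite subsets of $A$ in $V$. A subsidiary point is the reduction of tuple-coding to singleton coding needed to reconcile (4) with the $k=1$ form of (3), which is automatic here because $T$ is always stable in our setting (Theorem~\ref{coding}).
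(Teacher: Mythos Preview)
Your approach is essentially the paper's: both observe via Fact~\ref{fact:WalczakTypkeCorrespondence} that every countable theory admitted by $A$ is $\omega$-categorical and strongly minimal in $L[T]$, then invoke Proposition~\ref{prop:omega-cat-strongly-minimal-char} for the model-theoretic equivalences and Proposition~\ref{prop:nebulously-not-coding} (together with Fact~\ref{fact:ill-founded-forcing}) for the coding/forcing translation. The paper compresses this into two sentences; you have written out an explicit cycle, which is a reasonable expansion of the same argument.

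One slip to fix in (2)$\Rightarrow$(6): you let $T$ be the theory of the projective space structure on $A/E$, but then need $C_n \subseteq \Pc(A^{(n)})$, not $\Pc((A/E)^{(n)})$, so the definable sets you produce live on the wrong underlying set. The fix is to take $T$ to be the theory of $A$ itself, equipped with $E$ and the relations encoding the projective structure on $A/E$. Since $A$ is amorphous, this $T$ is still $\omega$-categorical and strongly minimal in $L[T]$, and it one-dimensionally interprets the projective space, hence an infinite group; so Proposition~\ref{prop:omega-cat-strongly-minimal-char}(\ref{power-equivalent}) applies to this $T$ and gives $\ell_n(T) > 2^{p(n)}$. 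Then taking $C_n$ to be the set of all $T$-definable subsets of $A^{(n)}$ (with parameters) yields $|C_n| = 2^{\ell_n(T)}$ as you intended. A parallel remark applies to the parameter-handling in (5)$\Rightarrow$(4) and (3)$\Rightarrow$(1): since the coding formula may carry parameters $\bar d$, you should note explicitly that $\omega$-categoricity isolates $\tp(\bar d)$, so a single realization $\bar d' \in A$ works uniformly for all $m$ and yields a fixed $D \subseteq A^3$.
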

\begin{proof}
  By Fact~\ref{fact:WalczakTypkeCorrespondence}, any theory supported by $A$ must be strongly minimal and $\omega$-categorical, so the equivalence of (\ref{amorphous-group})-(\ref{amorphous-faster}) follows by Propositions~\ref{prop:nebulously-not-coding} and \ref{prop:omega-cat-strongly-minimal-char}.
\end{proof}

\bibliographystyle{alpha}
\bibliography{ref.bib}

\end{document}